\newtheorem{theorem}{Theorem}[section]
\newtheorem*{theorem*}{theorem}
\newtheorem{proposition}[theorem]{Proposition}
\newtheorem{lemma}[theorem]{Lemma}
\newtheorem{corollary}[theorem]{Corollary}
\newtheorem{remark}[theorem]{Remark}
\newtheorem{question}[theorem]{Question}
\def\Z{{\mathbb Z}}
\def\N{{\mathbb N}}
\def\cP{{\mathcal P}}
\def\IT{{\rm IT}}
\def\IN{{\rm IN}}
\def\Per{{\rm Per}}
\def\Aper{{\rm Aper}}
\def\Ker{{\rm Ker}}
\def\Stab{{\rm Stab}}
\begin{document}

\title[Sequence entropy and independence]{Sequence entropy and independence in free and minimal actions} 
\author{Jaime Gómez}
\author{Irma León-Torres}
\author{Víctor Muñoz-López}

\address{J. G\'omez, Mathematical Institute, University of Leiden, The Netherlands}
\email{j.a.gomez.ortiz@math.leidenuniv.nl}  
\thanks{J. Gómez thanks the Physics Institute of the Universidad Autónoma de San Luis Potosí, where part of this work was done, for its hospitality.}

\address{I. Le\'on-Torres, Physics Institute, Universidad Aut\'onoma de San Luis Potos\'i, M\'exico.}
\email{yooirma@gmail.com}

\address{V. Muñoz-L\'opez, Physics Institute, Universidad Aut\'onoma de San Luis Potos\'i, M\'exico.}
\email{soygenn@gmail.com}
\thanks{I. León-Torres and V. Muñoz-López were supported by a CONAHCyT fellowship for their PhD studies and the grant K/NCN/000198 of the NCN}

\keywords{}


\begin{abstract}

For every countable infinite group that admits $\mathbb{Z}$ as a homomorphic image, we show that for each $m\in\N$, there exists a minimal action whose topological sequence entropy is $\log(m)$.
Furthermore, for every countable infinite group $G$ that contains a finite index normal subgroup $G'$  isomorphic to $\Z^r$, and for every $m\in \N$, we found a free minimal action
with topological sequence entropy $\log(n)$, where $m\leq n\leq m^{2^r[G:G']}$.
In both cases, we also show that the aforementioned  minimal actions  admit non-trivial independence tuples of size $n$ but do not admit non-trivial independence tuples of size $n+1$ for some $n\geq m$.

\end{abstract}

\maketitle
\section{Introduction}

Topological entropy is a measure of the complexity or unpredictability of a dynamical system.
It serves as an invariant that helps to identify when two dynamical systems are not conjugate.
Sequence entropy, a finer notion of entropy, was introduced by Kushnirenko (\cite{Ku67}) for measure preserving systems and by Goodman (\cite{Go74}) for (topological) dynamical systems and is helpful to distinguish systems of zero entropy.
The sequence entropy is similar to the classical entropy, the difference is that in the refinement for the sequence entropy is made using sequences of the acting group. 
It was noticed in \cite{HuYe09}, that the topological sequence entropy of a system can only take values of the form $\log(n)$, for $n\in\mathbb{N}\cup\{\infty\}$.


\medskip

According to classical topological entropy, a question arise:
Given a non-negative real number, can we construct a minimal dynamical system, over an amenable countable group, with the condition that the entropy of this system is the real number previously considered? 
Downarowicz, in \cite{Do91}, has answered this question in an affirmative way for $\mathbb{Z}$-actions; this result was extended for amenable group actions in \cite{Kr07, LaSt19}.
To answer this question, a particular family of dynamical systems,  known as \textit{Toeplitz subshifts}, was used.
The previous question can be refined by imposing additional conditions on the dynamical system, such as freeness or unique ergodicity, see \cite{Dr24} and \cite{IwLa91} for a treatment of this question for the uniquely ergodic condition in $\mathbb{Z}$ and $\mathbb{Z}^d$ actions.
Therefore, it seems natural to ask the same question for the topological sequence entropy of systems.

\medskip

In this paper, we are interested in constructing minimal dynamical systems that have zero topological entropy but can achieve arbitrarily large, yet finite, topological sequence entropy.
Moreover, for these examples we can guarantee that there exists a natural number for which the $\IT$-tuples of the system are trivial.
The first result of this document reads as follows.

\begin{theorem}\label{main-theo-3}
    Let $G$ be an infinite countable group such that $\mathbb{Z}$ is a homomorphic image of $G$.
    For every $m\geq 1$, there exists a minimal subshift $X\subseteq \{0,\dots,m-1\}^G$ with topological sequence entropy equal to $\log(m)$.
\end{theorem}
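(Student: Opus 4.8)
The plan is to reduce the statement to the case $G=\Z$ and then realise the desired system as a carefully built Toeplitz subshift over $\Z$. The case $m=1$ is trivial (the one-point system has sequence entropy $\log 1=0$), so assume $m\ge 2$, and fix a surjective homomorphism $\pi\colon G\twoheadrightarrow\Z$. Given a minimal subshift $Y\subseteq\{0,\dots,m-1\}^{\Z}$, put
\[
X=\bigl\{\,y\circ\pi:\ y\in Y\,\bigr\}\ \subseteq\ \{0,\dots,m-1\}^{G},
\]
where $(y\circ\pi)(g)=y(\pi(g))$. One checks directly that $X$ is closed and shift-invariant (closedness uses surjectivity of $\pi$: the restriction of $y\circ\pi$ to a finite subset of $G$ determines $y$ on a finite subset of $\Z$), that the $G$-action on $X$ factors through $\pi$ and is a ``$\pi$-relabelling'' of $(Y,\sigma)$, and hence that $X$ is minimal since $Y$ is and $\pi$ is onto. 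Because $\pi$ maps the set of sequences in $G$ onto the set of sequences in $\Z$ and carries cylinder covers of $X$ to cylinder covers of $Y$, we get $h^{\mathrm{top}}_{S}(X)=h^{\mathrm{top}}_{S}(Y)$, and the same correspondence shows $X$ and $Y$ have non-trivial $\IT$-tuples of the same sizes. So it is enough to build a minimal subshift $Y\subseteq\{0,\dots,m-1\}^{\Z}$ with $h^{\mathrm{top}}_{S}(Y)=\log m$ having a non-trivial $\IT$-tuple of size $m$ but none of size $m+1$.

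For $Y$ I would use a \emph{regular} Toeplitz subshift over $\{0,\dots,m-1\}$, built by the usual inductive hole-filling along a period chain $p_{1}\mid p_{2}\mid\cdots$, with two features designed in. First, at every stage enough positions are committed to make the construction regular, so that $Y$ is minimal, uniquely ergodic, and of zero topological entropy. Second, at each stage one reserves a new, very sparse batch of ``free positions'': an increasing sequence $(n_{k})_{k\ge 1}$ so that each $n_{k}$ is, at the stage it appears, an \emph{isolated} hole --- all positions within distance $R_{k}$ of it already belong to the skeleton, where $R_{k}\to\infty$ --- and is filled at the following stage by an \emph{arbitrary} letter of $\{0,\dots,m-1\}$; one arranges moreover that these choices at distinct $n_{k}$ live in independent residue coordinates of the chain, equivalently that over a suitable point of the odometer $\varprojlim\Z/p_{k}\Z$ all the $n_{k}$ remain undetermined, with letters that may be chosen arbitrarily and independently. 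The payoff is: for every $k$ and every $w\in\{0,\dots,m-1\}^{k}$ there is $y\in Y$ with $y(n_{i})=w_{i}$ for $i\le k$. Hence, writing $\mathcal{U}=\{[0],\dots,[m-1]\}$ for the clopen partition of $Y$ into letter-cylinders at the origin, every atom of $\bigvee_{i\le k}\sigma^{-n_{i}}\mathcal{U}$ is non-empty, so the sequence entropy of $\mathcal{U}$ along $(n_{k})$ equals $\log m$, giving $h^{\mathrm{top}}_{S}(Y)\ge\log m$; equivalently, $\{n_{k}:k\ge 1\}$ witnesses that the $m$-tuple $([0],\dots,[m-1])$ is a non-trivial $\IT$-tuple.

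The matching upper bound is the delicate half: by the Huang--Ye dichotomy an $m$-letter subshift could a priori have sequence entropy $\log n$ for some $n>m$, or $\infty$, so this has to be excluded. I would exclude it by showing $Y$ has no non-trivial $\IT$-tuple of size $m+1$; by the quantitative form of the Kerr--Li correspondence between positive sequence entropy and independence, this forces $h^{\mathrm{top}}_{S}(Y)\le\log m$, hence equality and the stated tuple behaviour. The mechanism: the only non-determinism in $Y$ is the single bounded $m$-valued choice at each isolated, well-separated free position --- off a fixed-radius neighbourhood of the free positions a point of $Y$ is pinned down by its odometer coordinate, and inside such a neighbourhood the one degree of freedom is the letter at the free position --- and, crucially, the construction is arranged so that the \emph{shape} of the skeleton around a free position depends only on low-level odometer data rather than on an independent parameter (this last point is exactly what is in tension with minimality and must be engineered). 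Counting blocks, a putative $\IT$-tuple $(x_{0},\dots,x_{m})$ of $m+1$ distinct points would force $Y$ to contain, for every $r$, at least $(m+1)^{r}$ distinct ``multi-windows'' of some fixed radius at $r$ sites drawn from an infinite independence set $J$; but the rigidity just described bounds the number of such multi-windows by $C\cdot m^{r_{1}}$, where $C$ is independent of $r$ and $r_{1}\le r$ counts the sites lying near distinct free positions --- a contradiction once $r$ is large.

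Thus the real content, and the main obstacle, is the Toeplitz construction of step three: making it simultaneously \emph{regular} (for minimality and zero entropy), equipping it with \emph{enough} built-in independence for the size-$m$ tuple, and --- the subtle requirement, since a naive attempt clashes with minimality --- keeping the skeleton around and between the free positions rigid and of sufficiently low complexity relative to the odometer that no additional independence can occur. Everything else, namely the transfer along $\pi$, the block-counting, and the appeal to the Huang--Ye and Kerr--Li results tying sequence entropy to $\IT$-tuples, is routine.
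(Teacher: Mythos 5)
Your first step --- pulling back a suitable $\mathbb{Z}$-subshift along the surjection $\phi\colon G\to\mathbb{Z}$ via $y\mapsto y\circ\phi$ and transferring minimality, sequence entropy and independence tuples --- is exactly what the paper does (its Propositions \ref{minimal} and \ref{IN-tuples+phi} on the cellular automaton $\phi^*$). The gap is in the $\mathbb{Z}$-half. You propose to hand-build a \emph{regular} Toeplitz subshift with sparse ``free positions'' carrying the independence, and you yourself identify the crux --- making the skeleton around the free positions rigid enough that no independence beyond size $m$ appears, while staying minimal and regular --- as ``the main obstacle'' and ``in tension with minimality,'' without resolving it. That construction is the entire content of the theorem at the $\mathbb{Z}$ level, so as it stands the proof is not complete. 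The paper sidesteps this difficulty altogether: it takes Williams' \emph{irregular} Toeplitz subshifts with exactly $m$ ergodic measures, obtains $\IT_m(X)\setminus\triangle^{(m)}(X)\neq\emptyset$ for free from the ergodic-measure count (Proposition \ref{medidas-IT}, due to Huang--Lian--Shao--Ye), and gets the upper bound from the fact that the maximal equicontinuous factor map is at most $m$-to-$1$ (Williams' Lemma \ref{Aper-lemma} plus Lemma \ref{Per-lemma}) together with the inclusion of $\IN$-tuples in regionally proximal tuples (Proposition \ref{It-to-Reg}). No combinatorial block-counting is needed.

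There is also a logical error in your upper bound. The Huang--Ye characterization (Proposition \ref{htop*}) expresses $h^*_{top}$ in terms of $\IN$-tuples, not $\IT$-tuples, so ruling out non-trivial $\IT_{m+1}$-tuples does \emph{not} force $h^*_{top}\leq\log(m)$: $\IN$-tuples only require arbitrarily large finite independence sets, and the Kerr--Li tame non-null Toeplitz subshift (recalled in Section \ref{Sec-5} of the paper) is precisely a system with $\IT_2(X)\setminus\triangle^{(2)}(X)=\emptyset$ but $\IN_2(X)\setminus\triangle^{(2)}(X)\neq\emptyset$, i.e.\ a counterexample to the implication you invoke for $m+1=2$. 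Your block-counting sketch, if it could be completed, would in fact bound finite independence sets and hence kill $\IN_{m+1}$-tuples as well, but you would need to state and prove it in that form; and for a regular subshift of the kind you describe, where a single odometer fiber contains all $m^{2}$ fillings of two free positions, the bound ``$C\cdot m^{r_1}$ multi-windows'' is exactly the delicate claim that remains unproved.
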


In the previous theorem, if $G=\mathbb{Z}$, then the minimal subshift is also free, i.e., every element in this subshift has trivial stabilizer. 
However, when $G\neq \mathbb{Z}$, this is no longer true. 
When we regard about free dynamical systems for the question related to topological sequence entropy, we obtain the following.

\begin{theorem}\label{theo-main}
    Let $G$ be a group that contains a finite index normal subgroup $G'$ with $G'\cong \mathbb{Z}^r$, for some $r\in\mathbb{N}$.
For every $m\geq 2$, there exists a free minimal subshift $(X,\sigma,G)$ with zero entropy such that its topological sequence entropy is a value between $\log(m)$ and $\log(m^{2^r[G:G']})$.
\end{theorem}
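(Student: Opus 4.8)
\medskip
\noindent\textbf{Plan of proof of \cref{theo-main}.}
The plan is to realise $(X,\sigma,G)$ as a \emph{regular Toeplitz subshift over $G$}, modelled on a descending chain of finite-index normal subgroups adapted to the $\Z^{r}$-structure of $G'$. The merit of this choice is that almost everything asked for comes for free. Since $G'\cong\Z^{r}$ is normal of finite index, $G$ is residually finite and amenable (it is virtually $\Z^{r}$). Fix a descending chain $G=\Gamma_{0}\supseteq\Gamma_{1}\supseteq\cdots$ of finite-index subgroups with $\bigcap_{i}\Gamma_{i}=\{e\}$ and let $\widehat G:=\varprojlim_{i}G/\Gamma_{i}$ be the associated $G$-odometer; $G$ acts on $\widehat G$ by translation, which is a free action, and the image of $G$ is dense, so $\widehat G$ is a free minimal $G$-system. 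A regular Toeplitz subshift modelled on $(\Gamma_{i})$ is, as usual, a minimal almost one-to-one extension of $\widehat G$: being an extension of a free system it is \emph{free} (if $g$ fixes a point it fixes its image in $\widehat G$, so $g=e$ — this settles freeness even when $G$ has torsion, e.g.\ the reflections when $G$ is infinite dihedral), it is \emph{minimal}, and regularity (the holes have F\o lner-density $\to0$) forces subexponential growth of the number of patterns on a F\o lner set, hence \emph{zero topological entropy}. Thus the whole task reduces to a regular Toeplitz construction whose topological sequence entropy lands in $[\log m,\ \log m^{2^{r}[G:G']}]$.

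For the chain I would take $\Gamma_{i}:=2^{i}G'$ for $i\ge1$ (with $\Gamma_{0}=G$): $2^{i}\Z^{r}$ is characteristic in $\Z^{r}$, so $2^{i}G'$ is normal in $G$; moreover $\bigcap_{i}2^{i}G'=\{e\}$, $[\Gamma_{i}:\Gamma_{i+1}]=2^{r}$, and the first index is $N:=[G:\Gamma_{1}]=[G:2G']=2^{r}[G:G']$. Work over the alphabet $A:=\{0,\dots,m-1\}^{N}$, so $|A|=m^{N}=m^{2^{r}[G:G']}$. Build a Toeplitz point $\omega\in A^{G}$ by the usual inductive filling: at step $i\ge1$, having defined $\omega$ on $G\setminus H_{i-1}$ to be $\Gamma_{i-1}$-periodic ($H_{0}:=G$), choose a $\Gamma_{i}$-periodic hole set $H_{i}\subsetneq H_{i-1}$ whose relative density in a fundamental domain of $\Gamma_{i}$ tends to $0$ and with $\bigcap_{i}H_{i}=\emptyset$, and assign $\Gamma_{i}$-periodic values on $H_{i-1}\setminus H_{i}$. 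The genuinely creative part is to make these choices so that \textbf{(a)} in one distinguished coordinate of $A$, \emph{every} finite word over $\{0,\dots,m-1\}$ occurs in $\omega$ at positions surrounded by arbitrarily deep holes, and \textbf{(b)} the remaining coordinates record, on the distinct $\Gamma_{1}=2G'$-cosets, rigid ``directional'' data. Then set $X:=\overline{\{g\cdot\omega:g\in G\}}$.

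The verification then runs as follows. Since $\omega$ is almost periodic (Toeplitz) and the construction is regular, the first paragraph gives that $X$ is free, minimal, and has zero topological entropy. The upper bound $h^{\mathrm{seq}}(X)\le\log|A|=\log m^{2^{r}[G:G']}$ is automatic, as any subshift on an alphabet $A$ has topological sequence entropy at most $\log|A|$. For the lower bound $h^{\mathrm{seq}}(X)\ge\log m$, use \textbf{(a)}: passing to the orbit closure produces a point $y^{\ast}\in X$ with infinitely many holes, in the distinguished coordinate of which all $\{0,\dots,m-1\}$-configurations on finite sets occur; enumerating these hole positions as a sequence $\mathbf s=(s_{n})_{n}$ gives $\#\{\,y|_{\{s_{1},\dots,s_{n}\}}:y\in X\,\}\ge m^{n}$, so $h_{\mathbf s}(X)\ge\log m$, and the associated $m$-tuple of cylinders is a nontrivial $\IT$-tuple. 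Since by \cite{HuYe09} the topological sequence entropy equals $\log n$ for some $n\in\N$, we conclude $m\le n\le m^{2^{r}[G:G']}$, and the same gap between lower and upper estimates yields nontrivial $\IT$-tuples of size $n$ but none of size $n+1$.

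The step I expect to be the crux is the filling in the second paragraph: simultaneously (i) keeping the construction regular — which is what preserves minimality, freeness, and zero entropy — and (ii) forcing the sequence entropy to be genuinely positive, at least $\log m$, which demands that deep holes of $\omega$ already witness all $m$ symbols. The tension is structural: a regular Toeplitz subshift over $\Z$ may let its holes recede in a single direction, carrying a single $m$-ary stream — this is essentially \cref{main-theo-3} for $G=\Z$ — but over a group that is merely virtually $\Z^{r}$ the holes cannot recede in one direction without freezing a finite-index subgroup, which would violate freeness; the freedom must therefore be distributed over the $N=[G:2G']$ cosets of $2G'$, and this is exactly why the alphabet — and with it the admissible range of $n$ — carries the factor $2^{r}[G:G']$, and why one can only bracket $h^{\mathrm{seq}}(X)$ between $\log m$ and $\log m^{2^{r}[G:G']}$ rather than compute it exactly. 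A secondary technical point, relevant when $G$ has torsion, is to verify that the almost one-to-one extension of $\widehat G$ is free at \emph{every} point and not just over the residual part of $\widehat G$; this is what the rigid directional data of \textbf{(b)} is designed for.
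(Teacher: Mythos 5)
Your high-level frame (realize $X$ as a Toeplitz subshift over a chain of normal finite-index subgroups built from $2^iG'$, get minimality, freeness and zero entropy from general Toeplitz/odometer facts) is sound and agrees with the paper's setup. But the proof has two genuine gaps, and they sit exactly where the content of the theorem is. First, the lower bound. Everything hinges on your property \textbf{(a)} — that every word over $\{0,\dots,m-1\}$ occurs at positions surrounded by arbitrarily deep holes, compatibly with regularity — and you explicitly defer its construction. This is not a routine filling: it is the entire theorem. The paper does \emph{not} do this; in fact it deliberately builds an \emph{irregular} Toeplitz array (the aperiodic part has positive density, Proposition \ref{regular}) so that the subshift has exactly $m$ ergodic measures, and the lower bound $\IT_m(X)\setminus\triangle^{(m)}(X)\neq\emptyset$ comes for free from Proposition \ref{medidas-IT}. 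Your regular array is uniquely ergodic, so that route is closed to you, and you would have to produce the arbitrarily large independence sets for the $m$ symbol-cylinders by hand; no argument is given that this can be done while keeping the hole density tending to $0$, nor that it yields $\IT$-tuples (arbitrarily large finite independence sets give $\IN$-tuples; an \emph{infinite} independence set is a further step).

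Second, the upper bound. You assert that a subshift over an alphabet $A$ has topological sequence entropy at most $\log|A|$ and choose $|A|=m^{2^r[G:G']}$ to make this match the target. That inequality is a theorem for $\mathbb{Z}$ (via maximal pattern entropy), but for a general countable group it is not proved here and is not in the paper's toolbox; note that if it held in this generality it would apply to the paper's own construction (alphabet $\{1,\dots,m,\beta\}$ of size $m+1$) and immediately sharpen Theorem \ref{theo-main} to $h^*_{top}\in\{\log m,\log(m+1)\}$, which the authors do not claim — a strong hint that this bound cannot simply be invoked. The paper instead gets the upper bound structurally: it decomposes $G$ into at most $2^r[G:G']$ ``cone'' regions $T_\zeta(x)$ on which every element of a fiber of $\pi_{\rm eq}$ is constant on its aperiodic part (Lemmas \ref{decomG}, \ref{Lemma:Const-T}), so every fiber has at most $m^{2^r[G:G']}$ points (Proposition \ref{Regio-prox-bound2}); since $\IN$-tuples are regionally proximal (Proposition \ref{It-to-Reg}) they live in fibers, and Proposition \ref{htop*} finishes. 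You would need either to prove the alphabet bound for virtually $\mathbb{Z}^r$ groups or to carry out a fiber-counting argument of this kind; as written, neither the lower nor the upper estimate is established.
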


To construct these examples we also use Toeplitz subshifts.
These dynamical systems were defined by Jake and Keane in \cite{JaKe69} for $\mathbb{Z}$-actions and \cite{Co06}, \cite{CoPe08} for countable groups-actions. 
This family of dynamical systems has been useful by their ergodic diversity and entropy, as we can observe in the works  \cite{CoPe14}, \cite{Do91},  \cite{FuKw19}, \cite{Kr07}, \cite{Wi84} to mention some of them.
Moreover, the maximal equicontinuous factor of these systems are well-known, they are called \textit{$G$-odometers}. 
This relation between Toeplitz subshifts and $G$-odometers allow us to study the sequence entropy using independence tuples.

\medskip

Independence is a combinatorial tool, introduced by Kerr and Li in \cite{KeLi07}, that helps us to study different dynamical properties, such as entropy, sequence entropy, and tameness. In the survey \cite{GaLi24}, we can see a broad overview of recent results relating of independence.
We are interested in IN-tuples and IT-tuples that help us to characterize topological sequence entropy and tameness, respectively, and they allow us to calculate the topological sequence entropy through their relationship with the maximal equicontinuous factor and ergodic measures.
In case of IN-tuples, positive topological sequence entropy is characterized with the existence of IN-pairs (IN-tuples of size two) of different elements. 
In a more general way, since IN-tuples are just the same of sequence entropy pairs, in case of tuples with different elements (see \cite{KeLi07}), we can assert from \cite{HuYe09} that topological sequence entropy is $\log(n)$ where $n$ is the natural number such that the dynamical system has an IN-tuple of size $n$ with different elements and all $\IN$-tuples of size $n+1$ having at least two elements that are equals.

\medskip

On the other hand, a dynamical system is called \emph{tame} if the cardinality of its Ellis semigroup is at most that of the continuum.
Moreover, IT-tuples characterize tameness as follows: A dynamical system is tame if and only if all IT-pairs are those pair of points which have same elements. 
In \cite{HuLi21}, it was studied the relationship between IT-tuples and the number of ergodic measures of minimal dynamical systems.
We say that a dynamical system is \emph{$m$-tame} if all IT-tuples of size $m$ have at least two coordinates with the same element. 
A system which is not $m$-tame, for some $m\in\mathbb{N}$, we call it \emph{$m$-untame}.
From the proof of Theorem \ref{main-theo-3} and Theorem \ref{theo-main}, we also deduce that the minimal subshifts presented in these theorems are $n$-untame, but $n+1$-tame, for $n=m$ in Theorem \ref{main-theo-3}, and for some $m\leq n\leq m^{2^r[G:G']}$ in Theorem \ref{theo-main}.


\medskip


\medskip

This work is divided as follows.
In Section \ref{Sec-2}, we give basic notions concerning topological dynamics, as well as some background with respect to Toeplitz  subshifts, $G$-odometers, independence and sequential entropy. 
At the end of Section \ref{Sec-2}, we discuss Theorem \ref{theo-main} for $\mathbb{Z}$-actions.
In Section \ref{sec-3}, we present the proof of Theorem \ref{main-theo-3}.
The proof of Theorem \ref{theo-main} is provided in Section \ref{sec-4}, we split the cases in two, when the group is abelian finitely generated with positive free rank and when the group contains a normal subgroup which is isomorphic to $\mathbb{Z}^r$ for some $r>0$.

\medskip

\textbf{Acknowledgments:} The authors would like to thank Felipe García-Ramos for his guidance, insightful comments and valuable support during the development of this work.
Additionally, the authors are grateful with Tobias Jäger for meaningful comments.

\section{Preliminaries}\label{Sec-2}
In this article, integers, nonnegative integers and natural numbers are denoted by $\Z,\Z_+$ and $\N$, respectively. 

\subsection{Dynamical systems and invariant measures}
Throughout this paper $G$ represents an infinite countable group with identity $1_G$, and $X$ a compact metric space.
By a \emph{dynamical system} $(X,\varphi,G)$ we refer to a continuous action $\varphi: G\times X\to X$.
We denote the image $\varphi(g,x)$ as $\varphi^g(x)$.
A system $(X,\varphi,G)$ is called \emph{minimal} if for every $x\in X$ its orbit $O_\varphi(x)=\{\varphi^g x: g\in G\}$ is dense in $X$ and it is called \emph{free} if $\mbox{Stab}_\varphi(x)=\{1_G\}$ for every $x\in X$, where $\mbox{Stab}_\varphi(x)=\{g\in G: \varphi^g(x)= x\}$ for $x\in X$.
A Borel subset $A\subseteq X$ is called invariant if $\varphi^g(A)= A$ for every $g\in G$.

\medskip

An \emph{invariant measure} of the dynamical system $(X,\varphi,G)$ is a Borel probability measure $\mu$ of $X$ that verifies $\mu(\varphi ^g (A))=\mu(A)$, for every $g\in G$ and every Borel set $A$.
An invariant measure $\mu$ is called \emph{ergodic} if $\mu(A)=0$ or $\mu(A)=1$ for every invariant subset $A\subset X$.
The set of invariant measures of $(X,\varphi,G)$ is denoted by $M_G(X)$.
If $\mu$ is an invariant measure of $(X,\varphi, G)$, the quadruple $(X,\varphi,G,\mu)$ is called a {\em probability-measure-preserving (p.m.p) dynamical system}.
Two p.m.p dynamical systems $(X,\varphi, G, \mu)$ and $(Y,\psi, G,\nu)$ are {\em measure conjugate} if: (i) there exist conull sets $X'\subseteq X$ and $Y'\subseteq Y$ satisfying $\varphi^g(X')\subseteq X'$ and $\psi^g(Y')\subseteq Y'$ for all $g\in G$, and (ii) there exists a bijective map $f:X'\to Y'$ such that $f, f^{-1}$ are both measurable, $\nu(A)=\mu(f^{-1}(A))$ for every measurable set $A\subseteq Y'$, and $f(\varphi^g(x))=\psi^g(f(x))$ for all $x\in X'$ and $g\in G$. In this case, we say that $f$ is a {\em measure conjugacy}.

\medskip

Let $(X,\varphi, G)$ and $(Y,\psi,G)$ two dynamical systems. We say that $(Y,\psi,G)$ is \emph{factor} of $(X,\varphi, G)$  through $\pi$, if $\pi:X\to Y$ is a surjective continuous map such that $\pi(\varphi^gx)=\psi^g(\pi(x))$ for each $g\in G$ and $x\in X$.
In this case, we say that $\pi$ is a \emph{factor map}.
A factor map $\pi:X\to Y$ is called \emph{almost} $1$-$1$ if the set of points in $Y$ having only one preimage is residual.
In the case that $X$ is minimal, this is equivalent to the existence of an element in $Y$ with only one preimage.

\medskip

A dynamical system $(X,\varphi,G)$ is called \emph{equicontinuous} if the family of maps $\{\varphi^g:X\to X\}_{g\in G}$ is equicontinuous. 
There exists a unique factor $(X_{\rm eq},\varphi_{\rm eq},G)$ of $(X,\varphi,G)$ such that $(X_{\rm eq},\varphi_{\rm eq},G)$ is equicontinuous, and if $(Y,\psi,G)$ is an equicontinuous factor of $(X,\varphi,G)$, then $(Y,\psi,G)$ is a factor of $(X_{\rm eq},\varphi_{\rm eq},G)$.
The system $(X_{\rm eq},\varphi_{\rm eq},G)$ is called the \textit{maximal equicontinuous factor} and we denote by $\pi_{\rm eq}$ its correspondent factor map.

\subsection{Toeplitz $G$-subshifts and $G$-odometers}\label{G-Toeplitz} 
Let $\Sigma$ be a finite set with at least two elements endowed with the discrete topology.
Consider the set
$$\Sigma^G=\{(x(g))_{g\in G}: x(g)\in \Sigma \mbox{ for every } g\in G\}$$
endowed with the product topology.
The \emph{(left) shift action} $\sigma$ of $G$ on $\Sigma^G$
is defined as $\sigma^g(x)(h)=x(g^{-1}h)$, for all $g,h\in G$ and $x\in \Sigma^G$.
The dynamical system $(\Sigma^G,\sigma,G)$ is known as a \textit{full $G$-shift} (or full-shift) and each closed subset $X\subseteq \Sigma^G$ that is a $\sigma$-invariant set is called a \textit{subshift}. 
We also call to the system $(X,\sigma, G)$ as a \emph{subshift}.

An element $x\in \Sigma^G$ is called a \textit{Toeplitz array}
or a \textit{Toeplitz element} if for every $g\in G$
there exists a finite index subgroup $\Gamma$ of $G$ such that 
$\sigma^\gamma(x)(g)=x(\gamma^{-1} g)=x(g)$, for every $\gamma\in \Gamma$.
Let  $\Gamma$ be a  subgroup of $G$, $x\in \Sigma^G$
and $\alpha \in\Sigma$.
We define 
$$\Per(x,\Gamma,\alpha)=\{g\in G: x(\gamma^{-1}g)=\alpha \mbox{ for each }\gamma\in\Gamma\}$$
and
$$\Per(x,\Gamma)=\bigcup_{\alpha \in\Sigma} \Per(x,\Gamma,\alpha).$$
The following lemma can be found in \cite{CoPe08}.
\begin{lemma}\label{no-normal}
    Let $\Gamma$ be a subgroup of $G$ and $x\in \Sigma^G$.
    For each $g\in G$ and $\alpha\in \Sigma$, it holds that 
    \begin{align*}
        \Per(\sigma^g x,\Gamma,\alpha)=g\Per(x,g^{-1}\Gamma g,\alpha).
    \end{align*}
\end{lemma}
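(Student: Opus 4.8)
The plan is to unwind the definitions of $\sigma$ and of $\Per(\cdot,\cdot,\cdot)$ directly, and then perform a single change of variables. I would argue that $h$ lies in $\Per(\sigma^g x,\Gamma,\alpha)$ if and only if $g^{-1}h$ lies in $\Per(x,g^{-1}\Gamma g,\alpha)$, which is exactly the claimed set equality $\Per(\sigma^g x,\Gamma,\alpha)=g\,\Per(x,g^{-1}\Gamma g,\alpha)$ once one notes that left translation by $g$ is a bijection of $G$.

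Concretely, first I would fix $h\in G$ and rewrite the membership condition $h\in\Per(\sigma^g x,\Gamma,\alpha)$. By definition this says $(\sigma^g x)(\gamma^{-1}h)=\alpha$ for every $\gamma\in\Gamma$, and since $(\sigma^g x)(\gamma^{-1}h)=x(g^{-1}\gamma^{-1}h)$, the condition becomes $x(g^{-1}\gamma^{-1}h)=\alpha$ for every $\gamma\in\Gamma$. Second, I would set $h'=g^{-1}h$ and $\gamma'=g^{-1}\gamma g$, observing that $g^{-1}\gamma^{-1}h=(g^{-1}\gamma g)^{-1}(g^{-1}h)=\gamma'^{-1}h'$, and that the map $\gamma\mapsto g^{-1}\gamma g$ is a bijection from $\Gamma$ onto $g^{-1}\Gamma g$. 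Hence the condition is equivalent to $x(\gamma'^{-1}h')=\alpha$ for every $\gamma'\in g^{-1}\Gamma g$, i.e.\ to $h'=g^{-1}h\in\Per(x,g^{-1}\Gamma g,\alpha)$. Third, I would conclude that $h\in\Per(\sigma^g x,\Gamma,\alpha)$ iff $h\in g\,\Per(x,g^{-1}\Gamma g,\alpha)$, giving the desired equality.

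There is essentially no obstacle here: the statement is a routine bookkeeping computation, and the only point requiring a moment of care is keeping track of the conjugation $\gamma\mapsto g^{-1}\gamma g$ (so that a genuinely non-normal subgroup $\Gamma$ produces the conjugate subgroup $g^{-1}\Gamma g$ on the right-hand side, which is the whole content of the lemma) together with the cancellation $g^{-1}\gamma^{-1}g=\gamma'^{-1}$. I would present the argument as the short chain of equivalences above rather than spelling out each substitution.
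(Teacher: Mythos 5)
Your computation is correct: unwinding $(\sigma^g x)(\gamma^{-1}h)=x(g^{-1}\gamma^{-1}h)=x\bigl((g^{-1}\gamma g)^{-1}(g^{-1}h)\bigr)$ and noting that $\gamma\mapsto g^{-1}\gamma g$ bijects $\Gamma$ onto $g^{-1}\Gamma g$ gives exactly the claimed equality. The paper does not include a proof (it cites \cite{CoPe08}), but your argument is the standard direct verification and matches what that reference does.
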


A subgroup $\Gamma\leq G$  is a \textit{group of periods of} $x$
if $\Per(x,\Gamma)\neq\emptyset$.
A group of periods $\Gamma$ is called an \textit{essential group of periods} of $x$
if for every $g\in G$ that satisfies  $\Per(x,\Gamma,\alpha)\subseteq \Per(\sigma^gx,\Gamma,\alpha)$
for every $\alpha\in\Sigma$, then we have $g\in\Gamma$.

Let $x\in\Sigma^G$ be a Toeplitz array. 
A \emph{period structure} of $x$ is a nested sequence of finite index subgroups $(\Gamma_n)_{n\in\N}$ of $G$ satisfying $G=\bigcup_{n\in\N}\Per(x,\Gamma_n)$ and $\Gamma_n$ is an essential group of periods of $x$, $n\in\N$.
A \textit{Toeplitz $G$-subshift} (or Toeplitz subshift) is the subshift generated by the closure of the $\sigma$-orbit of a Toeplitz array.

Let $(\Gamma_n)_{n\in\N}$ be a nested sequence of finite index subgroups of $G$ such that $\bigcap_{n=1}^\infty\Gamma_n=\{1_G\}$.
The \textit{$G$-odometer} associated to $(\Gamma_n)_{n\in\N}$
is defined as
$$\overleftarrow{G}=\left\{(g_n\Gamma_n)_{n\in\N}\in\prod_{n=1}^{\infty}G/\Gamma_n:\varphi_n(g_{n+1}\Gamma_{n+1})=g_n\Gamma_n, \mbox{ for every }n\in\N\right\},$$
where $\varphi_n:G/\Gamma_{n+1}\to G/ \Gamma_n$ is the canonical projection
for every $n\in\N$.
There is a natural action of $G$ on $\overleftarrow{G}$ given by the left coordinate-wise multiplication, i.e., $\phi^g((g_n\Gamma_n)_{n\in\mathbb{N}})=(gg_n\Gamma_n)_{n\in\mathbb{N}}$.
The dynamical system $(\overleftarrow{G},\phi,G)$ is a free equicontinuous minimal Cantor system and it is also known as a $G$-\emph{odometer}.
If $x\in\Sigma^G$ is a Toeplitz array with period structure $(\Gamma_n)_{n\in\N}$, then $\overleftarrow{G}$ is the maximal equicontinuous factor of the Toeplitz subshift $X=\overline{O_\sigma(x)}$ given by the factor map $\pi_{\rm eq}:X\to \overleftarrow{G}$, defined as $\pi_{\rm eq}(x)=(g_n\Gamma_n)_{n\in\N}$ when $\sigma^{g_n} x\in C_n$, for each $n\in\N$,
where 
$$C_n=\{y\in X:\Per(y,\Gamma_n,\alpha)=\Per(x,\Gamma_n,\alpha)\mbox{, for every }\alpha\in\Sigma\}.$$ 
\begin{lemma}\label{Per-lemma}
    Let $x\in\Sigma^G$ be a Toeplitz array with period structure given by $(\Gamma_n)_{n\in\mathbb{N}}$ and $\overline{O_\sigma(x)}\subseteq \Sigma^G$ its Toeplitz subshift associated.
    Let $\overleftarrow{G}$ be the $G$-odometer associated to $(\Gamma_n)_{n\in\mathbb{N}}$ and  $\pi:\overline{O_\sigma(x)}\to\overleftarrow{G}$ be the factor map from $\overline{O_\sigma(x)}$ into $\overleftarrow{G}$.
    If $\pi(y)=\pi(y')$ for $y,y'\in \overline{O_\sigma(x)}$, then $y(g)=y'(g)$ for each $g\in \bigcup_{n\in\mathbb{N}}\Per(y,g_n^{-1}\Gamma_ng_n)=\bigcup_{n\in\mathbb{N}}\Per(y',g_n^{-1}\Gamma_ng_n)$.
\end{lemma}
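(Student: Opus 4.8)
The plan is to unwind the definitions of the factor map $\pi=\pi_{\rm eq}$ and of the sets $C_n$, and then use \Cref{no-normal} to transport periodicity information along the orbit. First I would fix $y,y'\in\overline{O_\sigma(x)}$ with $\pi(y)=\pi(y')=(g_n\Gamma_n)_{n\in\N}$. By the definition of $\pi_{\rm eq}$, this means that for every $n\in\N$ we have $\sigma^{g_n}y\in C_n$ and $\sigma^{g_n}y'\in C_n$, i.e.
\begin{align*}
\Per(\sigma^{g_n}y,\Gamma_n,\alpha)=\Per(x,\Gamma_n,\alpha)=\Per(\sigma^{g_n}y',\Gamma_n,\alpha)
\end{align*}
for every $\alpha\in\Sigma$ and every $n\in\N$. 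Applying \Cref{no-normal} with $g=g_n$, the left-hand side equals $g_n\Per(y,g_n^{-1}\Gamma_n g_n,\alpha)$, and similarly for $y'$; hence
\begin{align*}
g_n\Per(y,g_n^{-1}\Gamma_n g_n,\alpha)=g_n\Per(y',g_n^{-1}\Gamma_n g_n,\alpha),
\end{align*}
so after cancelling $g_n$ on the left we get $\Per(y,g_n^{-1}\Gamma_n g_n,\alpha)=\Per(y',g_n^{-1}\Gamma_n g_n,\alpha)$ for all $\alpha$ and all $n$. Taking the union over $\alpha\in\Sigma$ gives $\Per(y,g_n^{-1}\Gamma_n g_n)=\Per(y',g_n^{-1}\Gamma_n g_n)$, and then the union over $n\in\N$ yields the claimed equality of the two index sets.

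It remains to see that $y$ and $y'$ agree pointwise on this common set. Let $g\in\Per(y,g_n^{-1}\Gamma_n g_n)$ for some $n$; then there is a (unique) $\alpha\in\Sigma$ with $g\in\Per(y,g_n^{-1}\Gamma_n g_n,\alpha)$, which by definition means $y((g_n^{-1}\gamma g_n)^{-1}g)=\alpha$ for every $\gamma\in\Gamma_n$; in particular, taking $\gamma=1_G$, we get $y(g)=\alpha$. By the equality of the period sets established above, $g\in\Per(y',g_n^{-1}\Gamma_n g_n,\alpha)$ as well (the value $\alpha$ is forced, being the common value of $y$ and $y'$ read off at $g$ via the same defining relation), hence $y'(g)=\alpha=y(g)$. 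This proves $y(g)=y'(g)$ for every $g$ in the common set, completing the argument.

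The only genuinely delicate point is the bookkeeping of conjugates: one must be careful that the essential group of periods relevant to $y$ is not $\Gamma_n$ itself but its conjugate $g_n^{-1}\Gamma_n g_n$, which is exactly what \Cref{no-normal} produces, and that the same conjugate appears for $y'$ because $\pi(y)=\pi(y')$ forces the same sequence $(g_n\Gamma_n)_n$ (so the cosets, and hence the conjugating elements up to $\Gamma_n$, coincide). Since conjugation by an element of $\Gamma_n$ does not change $g_n^{-1}\Gamma_n g_n$, the subgroup $g_n^{-1}\Gamma_n g_n$ is well defined independently of the representative $g_n$, and the whole argument goes through. I do not anticipate any other obstacle; the proof is essentially a direct computation with \Cref{no-normal} once the definition of $\pi_{\rm eq}$ via the sets $C_n$ is spelled out.
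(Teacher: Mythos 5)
Your proposal is correct and follows essentially the same route as the paper: unwind the definition of $\pi$ via the sets $C_n$ to get $\Per(\sigma^{g_n}y,\Gamma_n,\alpha)=\Per(\sigma^{g_n}y',\Gamma_n,\alpha)=\Per(x,\Gamma_n,\alpha)$, then apply Lemma \ref{no-normal} to conclude $\Per(y,g_n^{-1}\Gamma_n g_n,\alpha)=\Per(y',g_n^{-1}\Gamma_n g_n,\alpha)$ for each $\alpha$, from which the pointwise agreement follows by evaluating the defining relation at the identity. The paper's proof is terser but identical in substance; your extra care with the conjugation bookkeeping and the final pointwise step is fine.
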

\begin{proof}
   Suppose $\pi(y)=\pi(y')=(g_n\Gamma_n)_{n\in\mathbb{N}}$ and $g_n\in G$, $n\in\mathbb{N}$.
   By definition, we have $\sigma^{g_n}y,\sigma^{g_n}y'\in C_n $ for every $n\in\mathbb{N}$.
Therefore, $\Per(\sigma^{g_n}y,\Gamma_n,\alpha)=\Per(\sigma^{g_n}y',\Gamma_n,\alpha)=\Per(x,\Gamma_n,\alpha)$ for each $\alpha\in\Sigma$ and $n\in\mathbb{N}$. 
Using Lemma \ref{no-normal}  we deduce that $\Per(y,g_n^{-1}\Gamma_ng_n,\alpha)=\Per(y',g_n^{-1}\Gamma_ng_n,\alpha)=g_n^{-1}\Per(x,\Gamma_n,\alpha)$ and we conclude the lemma.
\end{proof}

\subsection{Independence and sequential entropy}
Let $(X,\varphi,G)$ be a dynamical system, $\mathcal{U}$ an open cover of $X$ and $S=(g_n)_{n\in\N}$ a sequence of $G$.
The \textit{sequence entropy of $(X,\varphi,G)$
with respect $\mathcal{U}$ along} $S$ is defined as
$$ h_{top}^*(X,\varphi,G,\mathcal{U};S)=\limsup_{n\to\infty} \frac{1}{n} N\left(\bigvee_{i=1}^{n} \varphi^{g^{-1}_n} \mathcal{U}\right), $$
where $N(\mathcal{C})$ is the minimal cardinality among all cardinalities of subcovers of $\mathcal{C}$.
The \textit{topological sequence entropy} of $(X,\varphi,G)$ is given by
\begin{align*}
h_{top}^*(X,\varphi,G)=&\sup_{\mathcal{U}, S} h_{top}^*(X,\varphi,G,\mathcal{U};S),
\end{align*}
where the supremum is taken over all open covers of $X$ and all sequences of $G$. 
The system $(X,\varphi,G)$ is called \emph{null} if $h^*_{top}(X,\varphi,G)=0$ and \emph{non-null} otherwise.
For a tuple $A=(A_1,\ldots,A_k)$ of subsets of $X$, we say that a set $J\subseteq G$ is an \textit{independence set} for $A$ if for every non-empty finite subset $I\subseteq J$ and function $s:I\to \{1,\ldots,k\}$ we have $$\bigcap_{g\in I}\varphi^{g^{-1}}A_{s(g)}\neq \emptyset.$$

A tuple $x=(x_1,\ldots,x_n)\in X^n$ is called an $n$-\textit{$\IN$-tuple} (\textit{$n$-$\IT$-tuple}) if for any product neighborhood $U_1\times\cdots\times U_n$ of $x$, the tuple $(U_1,\ldots,U_n)$ has arbitrarily large finite independence sets (has an infinite independence set).
We denote the set of $n$-$\IN$-tuples and $n$-$\IT$-tuples by $\IN_n(X)$ and $\IT_n(X)$, respectively. Note that $\IT_n(X)\subseteq \IN_n(X)$, for each $n\in\N$.

\begin{remark}\label{equalIT}
    If $(X,\varphi, G)$ is a minimal dynamical system, then the tuple $(x,\dots, x)\in X^n$ belongs to $\IT_n(X)$ for each $x\in X$. 
    Furthermore, if $(x_1, x_2,\dots, x_n)\in \IN_n(X)$, then $(x_1,x_1,x_2,\dots,x_n)\in\IN_{n+1}(X)$.
\end{remark}

For $n\in \N$, define the set
\begin{align}\label{def:triangle}
 \triangle^{({n})}(X)=\{(x_1,\dots,x_n)\in X^n:x_i=x_j \mbox{ for some } i,j\in \{1,\dots,n\}, i\neq j\}   
\end{align}

\begin{proposition}[{\cite[Proposition 5.4]{KeLi07}}]\label{closedtuples}
    Let $(X,\varphi,G)$ be a dynamical system. The following are true:
    \begin{enumerate}
        \item Let $(A_1,\ldots,A_k)$ be a tuple of closed subsets of $X$ which has arbitrarily large finite independence sets. Then there exists an $\IN$-tuple, $(x_1,\ldots,x_k)$ with $x_j\in A_j$ for all $1\leq j \leq k$.
        \item $\IN_2(X)\setminus \triangle^{(2)}(X)\neq \emptyset$ if and only if $(X,\varphi,G)$ is non-null.
        \item $\IN_k(X)$ is a closed $G$-invariant subset of $X^k$.
    \end{enumerate}
\end{proposition}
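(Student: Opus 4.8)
The plan is to verify \textbf{(3)} directly, derive \textbf{(2)} from \textbf{(1)}, and obtain \textbf{(1)} — the main point — by a compactness argument built on the Sauer–Shelah-type combinatorics of \cite{KeLi07}.

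For \textbf{(3)} I would argue straight from the definitions. For $G$-invariance: given $(x_1,\dots,x_k)\in\IN_k(X)$ and $h\in G$, a product neighbourhood $V_1\times\cdots\times V_k$ of $(\varphi^hx_1,\dots,\varphi^hx_k)$ pulls back along the homeomorphism $\varphi^h$ to the product neighbourhood $\varphi^{h^{-1}}V_1\times\cdots\times\varphi^{h^{-1}}V_k$ of $(x_1,\dots,x_k)$, which therefore has arbitrarily large finite independence sets; and if $I$ is such a set, then using $\varphi^{g^{-1}}\varphi^{h^{-1}}=\varphi^{(hg)^{-1}}$ and reindexing by $g\mapsto hg$ one checks that $hI$ is an independence set of the same cardinality for $(V_1,\dots,V_k)$. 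Hence $(\varphi^hx_1,\dots,\varphi^hx_k)\in\IN_k(X)$. For closedness: if $(x_1^{(n)},\dots,x_k^{(n)})\to(x_1,\dots,x_k)$ with every tuple in $\IN_k(X)$, then each product neighbourhood of the limit is, for all large $n$, a product neighbourhood of $(x_1^{(n)},\dots,x_k^{(n)})$ and so has arbitrarily large finite independence sets; thus the limit lies in $\IN_k(X)$.

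For \textbf{(1)}: given a tuple $\mathbf A=(A_1,\dots,A_k)$ of closed sets with arbitrarily large finite independence sets, the plan is to shrink it coordinatewise to a tuple of singletons $(\{x_1\},\dots,\{x_k\})$ still having arbitrarily large finite independence sets; then automatically $x_j\in A_j$, and since every independence set for $(\{x_1\},\dots,\{x_k\})$ is an independence set for any product neighbourhood of $(x_1,\dots,x_k)$, the point $(x_1,\dots,x_k)$ is the required $\IN$-tuple. To run the shrinking I would invoke the combinatorial machinery of \cite{KeLi07}: first, the reformulation that a tuple has arbitrarily large finite independence sets iff it has \emph{positive independence density} (there is $c>0$ so that every finite $F\subseteq G$ contains an independence set of size $\ge c|F|$). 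Fixing such a $c$ for $\mathbf A$, one works in the compact metric space $\mathcal{K}^k$, where $\mathcal{K}$ is the hyperspace of nonempty closed subsets of $X$: the family of tuples $(B_1,\dots,B_k)$ with $B_j\subseteq A_j$ and independence density $\ge c$ is closed (``$J$ is an independence set for $(B_1,\dots,B_k)$'' is closed under Hausdorff limits, and only finitely many $J$ sit in a fixed finite window) and nonempty, and a decreasing chain converges to its coordinatewise intersection, which stays in the family; so Zorn's lemma yields a minimal such tuple $\mathbf B$. Finally one shows each coordinate of $\mathbf B$ is a single point: if, say, $B_1=E\cup E'$ with $E,E'$ proper closed subsets, a \emph{bisection lemma} — splitting one coordinate of a positive-density tuple into two closed pieces leaves positive independence density on one of the pieces (or on a pair built from them) — contradicts minimality, so each $B_j$ is a singleton $\{x_j\}$ and we are done.

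For \textbf{(2)}: $(\Leftarrow)$ if $(X,\varphi,G)$ is non-null, then by the combinatorial description of topological sequence entropy in terms of independence (again via the Sauer–Shelah-type lemma) non-nullness is witnessed by two nonempty open sets $U_0,U_1$ with $\overline{U_0}\cap\overline{U_1}=\emptyset$ such that the pair $(U_0,U_1)$ has arbitrarily large finite independence sets; then so does $(\overline{U_0},\overline{U_1})$, and part \textbf{(1)} gives an $\IN$-pair $(x_0,x_1)$ with $x_0\in\overline{U_0}$, $x_1\in\overline{U_1}$, so $x_0\ne x_1$ and $\IN_2(X)\setminus\triangle^{(2)}(X)\ne\emptyset$. $(\Rightarrow)$ Conversely, take $(x_0,x_1)\in\IN_2(X)$ with $x_0\ne x_1$, pick open $U_0\ni x_0$, $U_1\ni x_1$ with $\overline{U_0}\cap\overline{U_1}=\emptyset$, and form the two-element open cover $\mathcal W=\{X\setminus\overline{U_1},\,X\setminus\overline{U_0}\}$. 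For an independence set $J$ of $(U_0,U_1)$ and any $s\in\{0,1\}^J$, a witness $y_s\in\bigcap_{g\in J}\varphi^{g^{-1}}U_{s(g)}$ lies in $\bigcap_{g\in J}\varphi^{g^{-1}}(X\setminus\overline{U_{1-s(g)}})$ but in none of the analogous intersections indexed by $t\ne s$; hence $N\big(\bigvee_{g\in J}\varphi^{g^{-1}}\mathcal W\big)\ge 2^{|J|}$. Since the independence sets of $(U_0,U_1)$ in fact have positive density in every finite window, along any enumeration $S$ of $G$ this forces $h_{top}^*(X,\varphi,G,\mathcal W;S)>0$, so $(X,\varphi,G)$ is non-null.

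The part I expect to be the real obstacle is the pair of combinatorial inputs invoked in \textbf{(1)} and \textbf{(2)}: the equivalence between ``arbitrarily large finite independence sets'' and ``positive independence density'', and the bisection lemma that carves the minimal tuple down to points. Both rest on the Karpovsky–Milman generalization of the Sauer–Shelah lemma (a sufficiently large set of colourings in $\{1,\dots,k\}^n$ contains a $2$-element combinatorial cube on a linear-sized coordinate subset), and the delicate point is to control how the density constant degrades under splitting and to select one surviving piece uniformly over all finite windows rather than window by window. Since this is exactly \cite[Proposition 5.4]{KeLi07}, in the paper it is only cited rather than reproved.
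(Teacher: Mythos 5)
The paper offers no proof of this proposition---it is quoted verbatim from \cite[Proposition 5.4]{KeLi07}---so the only question is whether your reconstruction is sound. Part (3) is fine, and deducing (2)($\Leftarrow$) from (1) is fine. But the engine you propose for (1) rests on a false claim: a tuple with arbitrarily large finite independence sets need \emph{not} have positive independence density. That purported equivalence is exactly the distinction between IE-tuples (which characterize positive entropy and are defined via positive-density independence sets) and IN-tuples (defined via arbitrarily large finite independence sets); the Toeplitz subshifts built in this very paper---zero entropy but sequence entropy $\log m$---have non-diagonal IN-pairs whose independence sets necessarily have density zero in every F\o lner window, since positive density would produce a non-diagonal IE-pair and hence positive entropy. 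The correct route, which is what Kerr and Li do, never leaves the ``arbitrarily large finite independence sets'' setting: the Karpovsky--Milman lemma is applied to each finite independence set $J$ of $(A_{1,1}\cup A_{1,2},A_2,\dots,A_k)$ individually, yielding $J'\subseteq J$ with $|J'|\geq c(k)\,|J|$ that is an independence set for one of the two refined tuples; a pigeonhole over which piece receives infinitely many sizes gives the splitting lemma, and one concludes by iteratively refining each coordinate into closed pieces of diameter tending to $0$, so that any product neighbourhood of the limiting singleton tuple contains some stage. No Zorn's lemma and no density are needed.

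Two further steps would fail even if the density reformulation were granted. In the Zorn argument you minimize over sub-tuples of independence density $\geq c$ for a \emph{fixed} $c$, but the bisection lemma only returns a piece of density $\geq c'$ with $c'<c$ depending on $k$; minimality in the fixed-$c$ family therefore cannot be contradicted, and letting $c$ float along a chain destroys the closure property you need. In (2)($\Rightarrow$), ``along any enumeration $S$ of $G$'' is too strong: an arbitrary enumeration need not have initial segments containing proportionally large independence sets, and concatenating independence sets of sizes $1,2,3,\dots$ gives $\tfrac{1}{n}\log N\to 0$. One must concatenate independence sets of geometrically growing sizes (say $|J_m|=2^m$), so that the initial segment of length $2^{m+1}-2$ already contains an independence set of size $2^m$ and the $\limsup$ stays at least $\tfrac12\log 2$. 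Your lower bound $N\bigl(\bigvee_{g\in J}\varphi^{g^{-1}}\mathcal W\bigr)\geq 2^{|J|}$ is correct; only the choice of sequence needs repair.
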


\begin{proposition}[{\cite[Theorem 4.4]{HuYe09}}]\label{htop*}
Let $(X,\varphi,G)$ be a dynamical system. 
Then $h^*_{top}(X,\varphi,G)=\log(\max\{n:\IN_n(X)\setminus\triangle^{(n)}(X)\neq\emptyset\})$.
\end{proposition}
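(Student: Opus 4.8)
The plan is to prove the two inequalities in $h_{top}^*(X,\varphi,G)=\log N$ separately, where $N:=\max\{n:\IN_n(X)\setminus\triangle^{(n)}(X)\neq\emptyset\}$ (read as $+\infty$ if such non-diagonal $\IN$-tuples exist in every length; note $N\geq1$, so $\log N\geq0$). For $h_{top}^*(X,\varphi,G)\geq\log N$ I would exhibit one good cover and one good sequence. Assume $N\geq2$ (the case $N=1$ is trivial), fix $(x_1,\dots,x_N)\in\IN_N(X)$ with pairwise distinct coordinates, choose pairwise disjoint open sets $U_i\ni x_i$ and open sets $U_i'$ with $x_i\in U_i'$ and $\overline{U_i'}\subseteq U_i$, put $W:=X\setminus\bigcup_i\overline{U_i'}$, and take the finite open cover $\mathcal U:=\{U_1\cup W,\dots,U_N\cup W\}$. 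Since $U_1'\times\cdots\times U_N'$ is a product neighborhood of the $\IN$-tuple, the tuple $(U_1',\dots,U_N')$ has arbitrarily large finite independence sets; given such a set $J\subseteq G$, for each $\omega\colon J\to\{1,\dots,N\}$ pick $y_\omega\in\bigcap_{g\in J}\varphi^{g^{-1}}U'_{\omega(g)}$. As $U'_{\omega(g)}\subseteq U_{\omega(g)}\cup W$ while $U'_{\omega(g)}\cap(U_{s(g)}\cup W)=\emptyset$ whenever $s(g)\neq\omega(g)$ (the first intersection is empty because the $U_i$ are disjoint, the second because $\overline{U'_{\omega(g)}}$ was removed in forming $W$), one checks that $y_\omega$ lies in the member $\bigcap_{g\in J}\varphi^{g^{-1}}(U_{s(g)}\cup W)$ of $\bigvee_{g\in J}\varphi^{g^{-1}}\mathcal U$ exactly when $s=\omega$; hence those $N^{|J|}$ members are pairwise distinct and each must belong to every subcover, so $N(\bigvee_{g\in J}\varphi^{g^{-1}}\mathcal U)\geq N^{|J|}$. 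Taking pairwise disjoint independence sets $J_m$ with $|J_m|=m!$, listing them into one sequence $S=(g_i)$, and writing $n_m:=\sum_{i\leq m}|J_i|$, one has $\{g_1,\dots,g_{n_m}\}\supseteq J_m$, whence $h_{top}^*(X,\varphi,G,\mathcal U;S)\geq\limsup_m\frac{|J_m|}{n_m}\log N=\log N$. (Running this for every finite $n$ also handles the case $N=\infty$.)

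For $h_{top}^*(X,\varphi,G)\leq\log N$ I would argue by contradiction. If the inequality fails, there are a finite open cover $\mathcal U=\{U_1,\dots,U_k\}$, an injective sequence $S=(g_i)$, a $\delta>0$ and an increasing sequence $(n_j)$ with $N(\bigvee_{i=1}^{n_j}\varphi^{g_i^{-1}}\mathcal U)\geq e^{(\log N+\delta)n_j}$. Using the subcover consisting of the non-empty members of the join, the set $P_{n_j}:=\{\omega\in\{1,\dots,k\}^{\{1,\dots,n_j\}}:\bigcap_i\varphi^{g_i^{-1}}U_{\omega(i)}\neq\emptyset\}$ satisfies $|P_{n_j}|\geq e^{(\log N+\delta)n_j}$ as well. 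Since this grows exponentially with base exceeding $N=(N+1)-1$ over the fixed alphabet $\{1,\dots,k\}$, the density (multi-symbol) Sauer--Shelah lemma supplies a constant $c>0$, a fixed $(N+1)$-element subset $L=\{l_1,\dots,l_{N+1}\}$ of $\{1,\dots,k\}$ (pigeonholing over the finitely many candidate subsets), and subsets $J_j\subseteq\{1,\dots,n_j\}$ with $|J_j|\geq cn_j$, for infinitely many $j$, such that the restriction of $P_{n_j}$ to $J_j$ contains every function $J_j\to L$. Unwinding the definitions, $\{g_i:i\in J_j\}$ is then an independence set for $(U_{l_1},\dots,U_{l_{N+1}})$, so this tuple of open sets — and hence, after replacing each $U_l$ by $\overline{U_l}$, the corresponding tuple of closed sets — has arbitrarily large finite independence sets. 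By Proposition \ref{closedtuples}(1) there is $(z_1,\dots,z_{N+1})\in\IN_{N+1}(X)$ with $z_\ell\in\overline{U_{l_\ell}}$, and once one knows the $z_\ell$ are pairwise distinct this contradicts the maximality of $N$; taking the supremum over $\mathcal U$ and $S$ then gives the claim.

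The hard part is twofold: the quantitative density Sauer--Shelah step, and proving that the coordinates $z_1,\dots,z_{N+1}$ are pairwise distinct — distinct members of a cover can have overlapping closures, so this is not automatic. I would recover distinctness by observing that it suffices to run the argument over the cofinal family of finite open covers of arbitrarily small mesh, and then refining the combinatorial extraction so that the $N+1$ selected cover elements may be assumed to have pairwise disjoint closures, after which Proposition \ref{closedtuples}(1) delivers a genuinely non-diagonal $\IN$-tuple. Everything else is routine bookkeeping; in particular the purely qualitative statement $h_{top}^*(X,\varphi,G)>0\iff\IN_2(X)\setminus\triangle^{(2)}(X)\neq\emptyset$ is the case $N=1$ (so $N+1=2$) and already follows from Proposition \ref{closedtuples}(2).
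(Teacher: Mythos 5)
This proposition is quoted verbatim from \cite[Theorem 4.4]{HuYe09} (combined with the identification of $\IN$-tuples with sequence entropy tuples from \cite{KeLi07}); the paper gives no proof, so there is no in-paper argument to compare yours against. Judged on its own, your lower bound $h^*_{top}\geq\log N$ is correct and essentially complete: the cover $\{U_1\cup W,\dots,U_N\cup W\}$ built from disjoint neighborhoods of a non-diagonal $\IN_N$-tuple, the count $N\bigl(\bigvee_{g\in J}\varphi^{g^{-1}}\mathcal U\bigr)\geq N^{|J|}$ for an independence set $J$ of $(U_1',\dots,U_N')$, and the factorial-block concatenation realizing the $\limsup$ are all sound.

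The upper bound has a genuine gap, and it sits exactly where you flagged it. Beyond the fact that the ``density multi-symbol Sauer--Shelah'' extraction is essentially the main combinatorial theorem of the very paper being cited (so invoking it comes close to assuming the result), the non-diagonality of the limiting tuple is not recovered by your proposed patch. Small mesh does not make the selected cover elements disjoint: in a connected space every finite open cover has overlapping members, the combinatorial lemma gives you no control over \emph{which} $N+1$ members get selected, and if their closures share a point then Proposition \ref{closedtuples}(1) may return a tuple lying in $\triangle^{(N+1)}(X)$, which contradicts nothing. The fix in \cite{HuYe09} is structural rather than metric: one first reduces, via subadditivity of $h^*(\cdot\,;S)$ under joins of covers, to covers that are admissible with respect to a candidate tuple of pairwise distinct points (covers of the form $\{V_1^c,\dots,V_{N+1}^c\}$ with $V_1,\dots,V_{N+1}$ pairwise disjoint closed neighbourhoods of the candidate coordinates), and only then runs the combinatorial extraction, so that distinctness is built in from the start. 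For the subshifts to which this paper actually applies the proposition there is a shortcut you could have used instead: clopen partitions are cofinal among finite open covers of a zero-dimensional compact space and refining a cover can only increase $h^*(\cdot\,;S)$, so one may assume the cover elements are pairwise disjoint closed sets, after which the extracted $\IN_{N+1}$-tuple is automatically non-diagonal.
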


A tuple $(x_1,\ldots,x_n)\in X^n$ is called \textit{$n$-regionally proximal} if for each $\varepsilon>0$, there exist $x'_1,\ldots,x'_n\in X$ such that $d(x_i,x'_i)<\varepsilon$ for all $i\in\{1,\ldots,n\}$, and there exists $g\in G$ such that $d(\varphi^ gx'_i,\varphi^ gx'_j)<\varepsilon$ for all $i,j\in \{1,\ldots,n\}$. We denote by $Q_n(X,G)$ the collection of all $n$-regionally proximal tuples of $(X,\varphi,G)$.

\medskip

The following proposition relates the $\IN$-tuples to the regionally proximal tuples of a system. 

\begin{proposition}\label{It-to-Reg}
    Let $(X,\varphi,G)$ be a dynamical system, $n\in\mathbb{N}$ and $x=(x_1,\ldots,x_n)\in X^n$. 
    If $x\in \IN_n(X)$, then $x$ is a $n$-regionally proximal tuple. 
    In particular, every $n$-$\IT$-tuple is an $n$-regionally proximal tuple.
\end{proposition}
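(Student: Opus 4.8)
The plan is to establish the inclusion $\IN_n(X)\subseteq Q_n(X,G)$ directly from the definitions, after which the last sentence is immediate from the already-noted inclusion $\IT_n(X)\subseteq\IN_n(X)$. The guiding observation is that membership of $x=(x_1,\dots,x_n)$ in $\IN_n(X)$ forces, for every product neighborhood of $x$, the existence of finite independence sets of size at least two, and that a single two-element independence set is exactly enough to produce both the perturbed points $x_1',\dots,x_n'$ and the one group element $g$ demanded by the definition of $n$-regional proximality.

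Concretely, I would fix $\varepsilon>0$ (the case $n=1$ is trivial, so assume $n\geq 2$) and set $U_i:=B(x_i,\varepsilon/2)$, the open ball of radius $\varepsilon/2$ about $x_i$, for $i=1,\dots,n$. Since $x\in\IN_n(X)$, the tuple $(U_1,\dots,U_n)$ has arbitrarily large finite independence sets, and since any subset of an independence set is again one, I may fix an independence set $J$ together with two distinct elements $h,g'\in J$. For each $i\in\{1,\dots,n\}$ I apply the defining property of $J$ to the subset $I=\{h,g'\}$ and the function $s\colon I\to\{1,\dots,n\}$ with $s(h)=i$ and $s(g')=1$; this produces a point $z_i\in\varphi^{h^{-1}}U_i\cap\varphi^{(g')^{-1}}U_1$, i.e.\ a point with $\varphi^h z_i\in U_i$ and $\varphi^{g'}z_i\in U_1$. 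Setting $x_i':=\varphi^h z_i$ and $g:=g'h^{-1}$, one gets $d(x_i,x_i')<\varepsilon/2<\varepsilon$ for every $i$, while $\varphi^g x_i'=\varphi^{g'h^{-1}}\varphi^h z_i=\varphi^{g'}z_i\in U_1$ for every $i$ (using that $\varphi$ is a left action), so that $d(\varphi^g x_i',\varphi^g x_j')<\varepsilon$ for all $i,j$ by the triangle inequality through $x_1$. As $\varepsilon>0$ was arbitrary, $x\in Q_n(X,G)$, and since $\IT_n(X)\subseteq\IN_n(X)$ every $n$-$\IT$-tuple is $n$-regionally proximal.

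I do not expect a serious obstacle, since the argument reduces to a single short construction. The only choices requiring a moment's thought are (i) the idea of steering all the perturbed orbits into the one common neighborhood $U_1$ at the common later time $g'$, rather than trying to synchronize them in pairs via a pigeonhole argument over a finite cover of $X$; and (ii) the routine verification that $g=g'h^{-1}$ does the job, which relies only on $\varphi^{g'h^{-1}}\circ\varphi^h=\varphi^{g'}$. It is worth noting that only an independence set of size two is used here, so the full strength of ``arbitrarily large finite independence sets'' is not needed for this particular implication.
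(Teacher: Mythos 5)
Your proof is correct and follows essentially the same route as the paper's: both take the balls $U_i=B(x_i,\varepsilon/2)$, extract a two-element independence set $\{h,g'\}$, use the maps sending $h\mapsto i$ and $g'\mapsto 1$ to find $z_i$ with $\varphi^h z_i\in U_i$ and $\varphi^{g'}z_i\in U_1$, and then set $x_i'=\varphi^h z_i$ and $g=g'h^{-1}$ to synchronize all perturbed orbits inside $U_1$. Your closing remark that only an independence set of cardinality two is needed is also implicit in the paper's argument.
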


\begin{proof}
    Let $x=(x_1,\dots, x_n)$ be an $n$-$\IN$-tuple. 
    Let $\varepsilon>0$ and $U_i=B(x_i,\varepsilon/2)$.
    For the sets $U_1,\ldots, U_n$ there exists an independence set $J\subseteq G$ with $|J|\geq 2$.
    Let $g,h\in J$ with $g\neq h$.
    By independence, for $i\in\{1,\ldots,n\}$ and $\sigma_i:\{g,h\}\to \{1,\ldots,n\}$
    given by $\sigma_i(g)=i$ and $\sigma_i(h)=1$, we have that
\[\varphi ^{g^{-1}}U_i\cap \varphi^{h^{-1}}U_1\neq\emptyset.\]
    Then, there exists $y_i\in X$ such that $\varphi^{g} y_i\in U_i$ and $\varphi^{h} y_i\in U_1$.
    Let $x_i'=\varphi^{g}y_i\in U_i$ and $g_0=hg^{-1}$.  
    Observe 
    $\varphi^{g_0}x_i'=\varphi^{hg^{-1}}\varphi^gy_i=\varphi^hy_i\in U_1$ for each $i\in\{1\ldots, n\}$.
    Thus, $d(\varphi^{g_0}x_i',\varphi^{g_0}x_j')<\varepsilon$ for every $i,j\in\{1,\ldots, n\}$.
    Since $x_i'\in U_i$ for every $i\in \{1,\dots, n\}$, we deduce that  $x$ is an $n$-regionally proximal tuple. 
\end{proof}

\begin{proposition}[{\cite[Proposition 3.7]{HuLi21}}]\label{medidas-IT}
Let $(X,\varphi,G)$ be a minimal dynamical system and assume that the maximal equicontinuous factor map is $1$-$1$.
If $|M^{\rm erg}(X,G)|> l-1$, for some $l\geq 2$, then $\IT_l(X)\setminus\triangle^{(l)}(X)\neq \emptyset$.
In particular, it implies that $\IN_l(X)\setminus \triangle^{(l)}(X)\neq \emptyset.$
\end{proposition}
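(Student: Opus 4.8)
The plan is to realise the required tuple inside the support of a suitable joining over the maximal equicontinuous factor. Fix $l$ distinct ergodic measures $\mu_1,\dots,\mu_l\in M^{\rm erg}(X,G)$; being distinct ergodic measures, they are pairwise mutually singular. Since $(X,\varphi,G)$ is minimal and $\pi_{\rm eq}$ is almost $1$-$1$, the factor $(X_{\rm eq},\varphi_{\rm eq},G)$ is minimal and equicontinuous, hence uniquely ergodic; let $\nu$ denote its unique invariant measure and $\mu_i=\int_{X_{\rm eq}}\mu_{i,y}\,d\nu(y)$ the disintegration of $\mu_i$ along $\pi_{\rm eq}$. Let $\lambda$ be the relatively independent joining of $\mu_1,\dots,\mu_l$ over $X_{\rm eq}$, that is, the $G$-invariant Borel probability measure on $X^l$ determined by $\lambda(A_1\times\cdots\times A_l)=\int_{X_{\rm eq}}\prod_{i=1}^{l}\mu_{i,y}(A_i)\,d\nu(y)$. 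Its $i$-th coordinate marginal is $\mu_i$, and since $\mu_{i,y}$ lives on $\pi_{\rm eq}^{-1}(y)$, $\lambda$ is concentrated on the closed set $\{\bar w\in X^l:\pi_{\rm eq}(w_1)=\cdots=\pi_{\rm eq}(w_l)\}$.

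First I would verify that $\lambda(\triangle^{(l)}(X))=0$. Fix $i\neq j$. By mutual singularity there are disjoint Borel sets $B_i,B_j\subseteq X$ with $\mu_i(B_i)=\mu_j(B_j)=1$; as the $i$-th and $j$-th marginals of $\lambda$ are $\mu_i$ and $\mu_j$, the set $\{\bar w:w_i\in B_i,\ w_j\in B_j\}$ has full $\lambda$-measure, and it is disjoint from $\{\bar w:w_i=w_j\}$. Hence $\lambda(\{\bar w:w_i=w_j\})=0$, and summing over the finitely many pairs gives $\lambda(\triangle^{(l)}(X))=0$. Since $\triangle^{(l)}(X)$ is closed and $\lambda$ is a probability measure, $\supp(\lambda)\not\subseteq\triangle^{(l)}(X)$; pick $\bar x=(x_1,\dots,x_l)\in\supp(\lambda)$ with pairwise distinct coordinates. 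By the first paragraph, $\pi_{\rm eq}(x_1)=\cdots=\pi_{\rm eq}(x_l)=:z_0$.

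The heart of the argument is to show that $\bar x\in\IT_l(X)$. Fix a product neighbourhood $U_1\times\cdots\times U_l$ of $\bar x$, which we may take with pairwise disjoint closures; we must exhibit an infinite independence set for $(U_1,\dots,U_l)$. The idea is to convert the measure-theoretic independence of the coordinates relative to $X_{\rm eq}$ — encoded by $\lambda$ — into genuine combinatorial independence along return times to $X_{\rm eq}$, and this is precisely where almost $1$-$1$-ness is used. From $\bar x\in\supp(\lambda)$ we get $\int_{X_{\rm eq}}\prod_i\mu_{i,y}(U_i)\,d\nu(y)>0$, so the conditional measures $\mu_{i,y}$ simultaneously charge the $U_i$ for all $y$ in a set of positive $\nu$-measure. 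Choose $z^\ast\in X_{\rm eq}$ with $\pi_{\rm eq}^{-1}(z^\ast)=\{x^\ast\}$ a singleton (such $z^\ast$ exists because $\pi_{\rm eq}$ is almost $1$-$1$); then $\pi_{\rm eq}^{-1}$ of a small neighbourhood of $z^\ast$ is a small neighbourhood of $x^\ast$, and, by unique ergodicity of $X_{\rm eq}$, the return set of any point of $X_{\rm eq}$ to such a neighbourhood is syndetic with frequency equal to its $\nu$-measure. Combining these facts lets one select one group element $g_n$ per "scale" $n$ so that for every finite $F\subseteq\{g_n:n\in\N\}$ and every $s\colon F\to\{1,\dots,l\}$ there is $w\in X$ with $\varphi^{g}w\in U_{s(g)}$ for all $g\in F$; the infinite set $J=\{g_n:n\in\N\}$ is then an independence set for $(U_1,\dots,U_l)$. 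In the situations relevant to this paper the construction is particularly transparent: $X_{\rm eq}$ is a $G$-odometer attached to a chain $(\Gamma_n)$, the scales are the levels $\Gamma_n$, and inside a single $\Gamma_n$-coset one may independently prescribe which of the $l$ "fibre states" coming from $\mu_1,\dots,\mu_l$ the array exhibits. I expect this passage — upgrading relative measure-independence over the equicontinuous factor to an infinite topological independence set — to be the main obstacle.

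Granting it, $\bar x\in\IT_l(X)\setminus\triangle^{(l)}(X)$, which proves the proposition; the final assertion then follows from $\IT_l(X)\subseteq\IN_l(X)$. (For $l=2$ this recovers the familiar fact that a minimal tame system must be uniquely ergodic, since an $\IT_2$-tuple off the diagonal is exactly an obstruction to tameness.)
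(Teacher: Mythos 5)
First, a remark on the comparison itself: the paper does not prove this proposition — it is quoted directly from \cite[Proposition 3.7]{HuLi21} and used as a black box — so there is no internal argument to measure yours against. (Note also that the hypothesis as printed, that $\pi_{\rm eq}$ is ``$1$-$1$'', must be read as ``almost $1$-$1$'', exactly as you do: a literally injective maximal equicontinuous factor map would force $X$ to be equicontinuous, hence uniquely ergodic, making the hypothesis $|M^{\rm erg}(X,G)|>l-1$ unsatisfiable for $l\geq 2$.)

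Your setup is sound and is the natural one: distinct ergodic measures are mutually singular, the relatively independent joining $\lambda$ over the (uniquely ergodic) maximal equicontinuous factor satisfies $\lambda(\triangle^{(l)}(X))=0$, and hence $\supp(\lambda)$ contains a tuple $\bar x$ with pairwise distinct coordinates lying in a single fibre of $\pi_{\rm eq}$. But the entire content of the proposition is the step you flag and then skip: showing that such an $\bar x$ admits, for every product neighbourhood $U_1\times\cdots\times U_l$, an \emph{infinite} independence set. The sentence beginning ``Combining these facts lets one select one group element $g_n$ per scale'' is not an argument — it does not explain why a single point $w$ can be made to realise an arbitrary labelling $s\colon F\to\{1,\dots,l\}$ simultaneously at all elements of a finite set $F$, which is what independence demands. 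Positivity of $\int_{X_{\rm eq}}\prod_i\mu_{i,y}(U_i)\,d\nu(y)$ only says that the $l$ conditional measures simultaneously charge the sets $U_i$ over a positive-measure set of fibres; it does not by itself produce orbit segments that switch between the $l$ ``fibre states'' according to a prescribed pattern. The passage from relative measure-theoretic independence over the equicontinuous factor to combinatorial independence is precisely where almost automorphy, the structure of the fibres, and a genuine inductive construction (as carried out in \cite{HuLi21}) must enter. As written, your proof establishes only that the candidate tuple exists, not that it is an $\IT$-tuple, so the proposal is incomplete at its decisive point.
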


\subsection{$\mathbb{Z}$-Toeplitz subshifts} \label{ZToeplitz}
In this subsection, we observe that  the Toeplitz subshifts constructed in \cite[Section 3]{Wi84} satisfy a stronger version of Theorem \ref{theo-main} when $G=\mathbb{Z}$. 

\medskip

Right below, we recall the construction of the $\Z$-Toeplitz sequences made in \cite{Wi84}: 
Let $m\in \N$ and $\Sigma=\{0,1,\ldots,m-1\}$. 
Fix a sequence of  positive integers $(p_n)_{n\in\N}$ such that
$p_i|p_{i+1}$ and $p_i\geq 3$, $\frac{p_{i+1}}{p_i}\geq 3$,
for all $i\in\N$.
Consider the sequence $(\alpha_i)_{i\in\N}\subseteq \mathbb{N}$, given by 
$\alpha_i=j\in\{0,1,\dots, m-1\}$ whenever $j\equiv i \pmod{m}$.
The Toeplitz element $\eta\in\Sigma^\Z$ is defined inductively.

\medskip

{\bf Step 1:}
Set $\eta(n)=\alpha_1$, for every $n\equiv 0$ or $n\equiv-1 \mod p_1$.

\medskip

{\bf Step 2:}
For each $k\in\Z$ consider $J(1,k)=[kp_1+1,(k+1)p_1-1)$.
Define $\eta(n)=\alpha_2$ for every $n\in J(1,k)$ with $k\equiv 0$ or $k\equiv -1\mod \frac{p_2}{p_1}$.

\medskip

{\bf Step i+1:}
For $i\in\N$,  denote by $J(i,k)$ the set of elements $n\in [kp_i,(k+1)p_i)$
for which $\eta(n)$ has not been defined at the end of the $i^{\mathrm{th}}$
step.
Define $\eta(n)=\alpha_{i+1}$ for $n\in J(i,k)$ with $k\equiv 0$ or $k\equiv-1 \pmod{ \frac{p_{i+1}}{p_i}}$.

\medskip

It is also guaranteed that if the series $\sum_{i=1}^\infty \frac{p_{i}}{p_{i+1}}$ converges, then the set of ergodic measures of the Toeplitz subshift $\overline{O_\sigma(\eta)}\subseteq \Sigma^G$ 
is in a one-to-one correspondence with $\Sigma$. 
Furthermore, it is shown that these systems have zero entropy and the odometer $\overleftarrow{G}$ associated to the sequence $(p_n)_{n\in\N}$ is the maximal equicontinuous factor of $\overline{O_\sigma(\eta)}$.

Let us recall the following lemma, which is important for the proof of Proposition \ref{theo-main-2}.

\begin{lemma}[{\cite[Lemma 3.3]{Wi84}}]\label{Aper-lemma}
For every $g\in \overleftarrow{G}$ and $\alpha\in \Sigma$
there exists $x\in \pi_{\rm eq}^{-1}(g)$ with $\eta(n)=\alpha$,
for all $n\in \mathbb{Z}\setminus \bigcup_{n\in\mathbb{N}}\Per(x,\Gamma_n)$. 
Conversely, for every $x\in\pi_{\rm eq}^{-1}(g)$ there exists $\alpha\in \Sigma$ such that $\eta(n)=\alpha$ for every $n\in\mathbb{Z}\setminus \bigcup_{n\in\mathbb{N}}\Per(x,\Gamma_n)$.
\end{lemma}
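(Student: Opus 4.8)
The plan is to analyze the structure of the fibers $\pi_{\rm eq}^{-1}(g)$ of the maximal equicontinuous factor map directly from Wiliams's inductive construction of $\eta$. The key observation is that for a Toeplitz array, the positions $n$ whose value is \emph{not} forced by any of the periods $\Gamma_n$ (the "aperiodic part" $\Z\setminus\bigcup_n\Per(x,\Gamma_n)$) are exactly the positions that in the construction were filled in on the "frontier" sets $J(i,k)$ with $k\equiv 0$ or $k\equiv -1\pmod{p_{i+1}/p_i}$, and these positions carry a value $\alpha_{i+1}$ that cycles through $\Sigma$ in a controlled way. I would first recall that $\pi_{\rm eq}(x)=(g_n\Gamma_n)_n$ records, at level $n$, which translate of $\eta$ agrees with $x$ on all $\Gamma_n$-periodic positions; hence two points in the same fiber agree on $\bigcup_n\Per(x,\Gamma_n)$ (this is Lemma \ref{Per-lemma} specialized to $G=\Z$), and the freedom in choosing $x\in\pi_{\rm eq}^{-1}(g)$ lies precisely in the complementary coordinates.

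Next I would make the combinatorial core precise. Writing $g=(g_n\Gamma_n)_n=(a_n+p_n\Z)_n$ with $a_n\to$ a point of the odometer, the periodic part of any $x\in\pi_{\rm eq}^{-1}(g)$ is the union over $i$ of the positions filled at step $i$ in the shifted construction based at $a_i$; the aperiodic positions $n\notin\bigcup_i\Per(x,\Gamma_i)$ are those that survive every step, i.e. the nested intersection of the "unfilled" frontier blocks. For such an $n$, at each step $i$ the block of length $p_i$ containing $n$ lies in a frontier index $k\equiv 0,-1\pmod{p_{i+1}/p_i}$, and because $p_{i+1}/p_i\geq 3$ these frontier choices are made independently across scales; one checks from the recursion that the eventual value $x(n)$ can be prescribed to be any fixed $\alpha\in\Sigma$ by choosing, at the first step $i$ where $n$ would be filled, the label $\alpha_{i+1}=\alpha$ — and the freedom to do this consistently for \emph{all} aperiodic $n$ simultaneously comes from the fact that $n\mapsto$ (the step at which $n$ is filled) together with the residue $i+1\bmod m$ determining $\alpha_{i+1}$ can be arranged so that every aperiodic position receives the \emph{same} symbol $\alpha$. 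This gives the forward direction. For the converse, the point is that once $g$ is fixed, the set of aperiodic positions of \emph{any} $x\in\pi_{\rm eq}^{-1}(g)$ is the same set $Z_g:=\Z\setminus\bigcup_n\Per(x,\Gamma_n)$ (it depends only on $g$, not on $x$), and along $Z_g$ the construction forces a single constant value — indeed by a limiting/compactness argument the restriction $x|_{Z_g}$ is locally constant and, using minimality of $\overline{O_\sigma(\eta)}$ together with the one-to-one correspondence between ergodic measures and $\Sigma$, must be globally constant equal to some $\alpha\in\Sigma$.

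The main obstacle I anticipate is the bookkeeping in the forward direction: showing that one can choose a \emph{single} $x\in\pi_{\rm eq}^{-1}(g)$ realizing a \emph{prescribed} constant value $\alpha$ on the \emph{entire} (infinite, and in general complicated) aperiodic set, rather than merely controlling finitely many coordinates. I would handle this by a compactness/diagonal argument: for each $N$ build $x_N\in\Sigma^\Z$ agreeing with the forced periodic pattern of $g$ on $[-N,N]$ and equal to $\alpha$ on the aperiodic positions in $[-N,N]$, check $x_N\in\overline{O_\sigma(\eta)}$ (because each $x_N$ agrees with a genuine translate $\sigma^{t_N}\eta$ on a large central window, using that $\eta$ itself has the value $\alpha_{i+1}$ prescribed on frontier blocks and that we may pick the window so the aperiodic symbols there are $\alpha$), pass to a convergent subsequence $x_{N_k}\to x$, and verify $\pi_{\rm eq}(x)=g$ by continuity of $\pi_{\rm eq}$ on the residual set of injectivity points — or more robustly, by checking directly that $\Per(\sigma^{a_n}x,\Gamma_n,\beta)=\Per(\eta,\Gamma_n,\beta)$ for all $n,\beta$. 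The converse direction's only subtlety is ruling out that $x$ takes two different values on $Z_g$; this follows because any two aperiodic positions lie, for large $i$, in frontier blocks that are translates of one another under the construction, and the inductive definition assigns them equal symbols once the base point $g$ is fixed.
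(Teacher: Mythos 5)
A preliminary remark: the paper itself offers no proof of this lemma --- it is quoted from \cite[Lemma 3.3]{Wi84} (and the ``$\eta(n)=\alpha$'' in the statement should read ``$x(n)=\alpha$'', as you correctly inferred from how it is used in Proposition \ref{theo-main-2}) --- so your sketch has to be judged against what a correct proof requires. Your overall picture (points of a fiber agree on the periodic part, are constant on the aperiodic part, and each constant is realized by steering the approximating translates) is the right one, but the two steps that carry the actual content are, respectively, circular and misstated.

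In the converse direction you pass from ``locally constant'' to ``globally constant'' by invoking minimality together with ``the one-to-one correspondence between ergodic measures and $\Sigma$''. That correspondence is a downstream consequence of exactly the fiber structure this lemma establishes, so it cannot be used here, and minimality by itself gives nothing. What actually forces global constancy is the two-sided frontier choice $k\equiv 0$ \emph{and} $k\equiv -1$: it yields (a) each unfilled block $J(i,k)$ is filled in one piece at a single later stage, so $\eta$ is constant on $J(i,k)$, and (b) $J(i,k)$ lies at distance at least $p_{i-1}$ from both endpoints of $[kp_i,(k+1)p_i)$, because the boundary sub-blocks are filled at every stage. Hence, writing $x=\lim_j\sigma^{b_j}\eta$, for any two aperiodic $n,n'$ and any $i$ with $2p_{i-1}>|n-n'|$ the points $n-b_j$ and $n'-b_j$ must lie in the \emph{same} $J(i,k)$ for large $j$, so $x(n)=x(n')$. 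This is not cosmetic: for the one-sided construction of Section \ref{sec-4} the aperiodic part genuinely splits into up to $2^r$ constancy classes, so any argument ignoring (b) proves too much. In the forward direction, ``choosing, at the first step $i$ where $n$ would be filled, the label $\alpha_{i+1}=\alpha$'' misstates the available freedom: the labels $\alpha_{i+1}$ are fixed once and for all by the construction; what you choose is the translate $b\equiv a_i \pmod{p_i}$, which moves a \emph{single} fixed aperiodic coordinate into a block $J(i,k)$ whose filling stage can be steered to any residue class modulo $m$ (possible because $p_{j+1}/p_j\geq 3$ leaves a non-frontier index at every level). Once one aperiodic coordinate is pinned to $\alpha$ along a subsequence, the converse direction --- not a simultaneous bookkeeping over all aperiodic $n$ --- gives the constant value everywhere; and the limit point should be taken directly as $\lim\sigma^{t_N}\eta$ (automatically in the orbit closure) rather than as a hand-built $x_N$ whose membership in $\overline{O_\sigma(\eta)}$ you would otherwise still owe.
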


The following proposition indicates that the previous systems are the one satisfying Theorem \ref{theo-main} for $G=\mathbb{Z}$.

\begin{proposition}\label{theo-main-2}
    For every $m>1$, there exists a free minimal subshift $X\subseteq \{0,1,\dots,m-1\}^{\mathbb{Z}}$ with zero entropy such that $h^*_{top}(X,\sigma,\mathbb{Z})=\log(m)$, $\IT_m(X)\setminus\triangle^{(m)}(X)\neq\emptyset$ and $\IT_{m+1}(X)\setminus \triangle^{(m+1)}(X)=\emptyset$.
\end{proposition}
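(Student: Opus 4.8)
The plan is to combine the construction of Williams recalled above with the characterizations of sequence entropy (Proposition~\ref{htop*}), the $\IT$–measure relation (Proposition~\ref{medidas-IT}), and the regional-proximality obstruction (Proposition~\ref{It-to-Reg}). Fix $m>1$, choose a sequence $(p_n)_{n\in\N}$ with $p_i\mid p_{i+1}$, $p_i\geq 3$, $p_{i+1}/p_i\geq 3$ and additionally $\sum_{i\geq 1} p_i/p_{i+1}<\infty$ (e.g.\ $p_n=4^n$), and let $\eta\in\Sigma^{\Z}$ be the Toeplitz array built in the three-step recursion, with $X=\overline{O_\sigma(\eta)}$. By the results quoted from \cite{Wi84}, $X$ is a minimal subshift of zero entropy whose maximal equicontinuous factor is the odometer $\overleftarrow{G}$ attached to $(p_n)$, and $|M^{\mathrm{erg}}(X,\Z)|=m$. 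The first thing I would check is \emph{freeness}: since $\Z$ is abelian, for $x\in X$ the stabilizer $\Stab_\sigma(x)$ is a subgroup $q\Z$, and if $q\neq 0$ then $x$ is periodic; but a periodic point forces $\pi_{\rm eq}(x)$ to be a periodic point of the odometer, which is aperiodic, a contradiction — alternatively one uses that $X$ has a Toeplitz (hence aperiodic, non-periodic) point and minimality to rule out any periodic point in $X$. Hence $(X,\sigma,\Z)$ is free.

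\textbf{Lower bound on the tuples.} The almost $1$-$1$ factor map $\pi_{\rm eq}:X\to\overleftarrow{G}$ from a Toeplitz subshift to its odometer need not be globally $1$-$1$, so Proposition~\ref{medidas-IT} does not apply verbatim; instead I would produce the $\IT_m$-tuple by hand using Lemma~\ref{Aper-lemma}. Pick any $g\in\overleftarrow{G}$ and, for each $\alpha\in\Sigma=\{0,\dots,m-1\}$, use Lemma~\ref{Aper-lemma} to choose $x^{(\alpha)}\in\pi_{\rm eq}^{-1}(g)$ with $x^{(\alpha)}(n)=\alpha$ for all $n\notin\bigcup_n\Per(x^{(\alpha)},\Gamma_n)$ (here $\Gamma_n=p_n\Z$). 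By Lemma~\ref{Per-lemma}, all the $x^{(\alpha)}$ agree on the common periodic part $\bigcup_n\Per(x^{(\alpha)},\Gamma_n)$, and they differ on the aperiodic part precisely by the constant value $\alpha$. Now I claim $(x^{(0)},x^{(1)},\dots,x^{(m-1)})\in\IT_m(X)$: given a product neighbourhood $U_0\times\cdots\times U_{m-1}$, there is a finite window $F\subseteq\Z$ and a level $n_0$ such that it suffices to realize, on $F$, the pattern of $x^{(\alpha)}$ restricted to $F$ for the chosen coordinate; the set $\{g\in\Z:\; gF\subseteq \text{a single }\Gamma_{n_0}\text{–coset avoiding the }\Gamma_{n_0}\text{–periodic positions}\}$ is infinite (it is essentially a full $\Gamma_{n_0}$–orbit of a large-enough index set, nonempty because the aperiodic part of $\eta$ has positive density and the construction leaves, at each level, two full $J$-blocks undefined), and along such $g$ one can independently prescribe the constant value on each translate to be any $\alpha$, which by Lemma~\ref{Aper-lemma} is compatible with some point of $\pi_{\rm eq}^{-1}(\text{the corresponding odometer point})\subseteq X$. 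This gives an infinite independence set, so the tuple is an $\IT_m$-tuple, and it lies outside $\triangle^{(m)}(X)$ since the $m$ points are pairwise distinct (they differ on the aperiodic part). By Proposition~\ref{htop*} this already yields $h^*_{top}(X,\sigma,\Z)\geq\log(m)$.

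\textbf{Upper bound.} I would show $\IN_{m+1}(X)\setminus\triangle^{(m+1)}(X)=\emptyset$, which by Proposition~\ref{htop*} gives $h^*_{top}(X,\sigma,\Z)\leq\log(m)$ and a fortiori $\IT_{m+1}(X)\setminus\triangle^{(m+1)}(X)=\emptyset$ since $\IT_{m+1}\subseteq\IN_{m+1}$. Suppose $(x_1,\dots,x_{m+1})\in\IN_{m+1}(X)$. By Proposition~\ref{It-to-Reg} it is $(m+1)$-regionally proximal; combined with the fact that $\pi_{\rm eq}$ is the maximal equicontinuous factor (so on the equicontinuous side regionally proximal means equal) one deduces $\pi_{\rm eq}(x_1)=\cdots=\pi_{\rm eq}(x_{m+1})$. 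Thus all $x_i$ lie in a common fibre $\pi_{\rm eq}^{-1}(g)$. Now apply the ``conversely'' part of Lemma~\ref{Aper-lemma}: each $x_i$ is constant, equal to some $\alpha_i\in\Sigma$, on the complement of its periodic part, and by Lemma~\ref{Per-lemma} all points in the fibre share the same periodic part $P:=\bigcup_n\Per(x_i,\Gamma_n)$ and agree on it. Since $|\Sigma|=m$, by pigeonhole two indices $i\neq j$ have $\alpha_i=\alpha_j$; then $x_i$ and $x_j$ agree on $P$ and on $\Z\setminus P$, hence $x_i=x_j$, so the tuple lies in $\triangle^{(m+1)}(X)$. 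This contradicts our assumption and completes the argument; the same fibre-plus-pigeonhole reasoning, applied with $m$ in place of $m+1$ but now only forcing \emph{at least} a coincidence, is consistent with the $\IT_m$-tuple found above having all coordinates distinct.

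\textbf{Main obstacle.} The delicate point is the lower bound: turning the ``two undefined $J$-blocks per level'' feature of Williams' construction into a genuine infinite independence set for an arbitrary product neighbourhood, i.e.\ verifying that one can freely and simultaneously prescribe the aperiodic constant along infinitely many pairwise ``compatible'' translates while staying inside $X=\overline{O_\sigma(\eta)}$. This amounts to a careful bookkeeping with the sets $J(i,k)$, the essential groups of periods $\Gamma_n=p_n\Z$, and Lemma~\ref{Aper-lemma}; the convergence hypothesis $\sum p_i/p_{i+1}<\infty$ is what guarantees the $m$ ergodic measures (and hence, morally, that exactly $m$ distinct aperiodic ``colours'' are available and no more), so it must be used in an essential way rather than just quoted.
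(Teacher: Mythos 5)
Your overall architecture coincides with the paper's: same Williams construction with $\sum p_i/p_{i+1}<\infty$, and your upper bound is exactly the paper's argument (an $\IN_{m+1}$-tuple is regionally proximal, hence lies in a single fibre of $\pi_{\rm eq}$; Lemmas~\ref{Per-lemma} and~\ref{Aper-lemma} bound the fibre by $m$ points, so pigeonhole forces a coincidence). The freeness remark is fine, though unnecessary to belabor: every infinite minimal $\mathbb{Z}$-system is free, as the paper notes at the start of Section~\ref{sec-4}.

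The problem is your lower bound. You decline to use Proposition~\ref{medidas-IT} on the grounds that $\pi_{\rm eq}$ is only almost $1$-$1$, but the hypothesis of that proposition (as in the cited result of Huang--Lian--Shao--Ye) is precisely that the maximal equicontinuous factor map is almost one-to-one --- read literally as ``$1$-$1$'' it would be vacuous for any non-equicontinuous minimal system, and the paper applies it to Toeplitz subshifts throughout. The paper's proof simply invokes it: $m$ ergodic measures give $\IT_m(X)\setminus\triangle^{(m)}(X)\neq\emptyset$, and Proposition~\ref{htop*} finishes. Your replacement --- a hand-built infinite independence set --- is not carried out, and the step you wave at (``along such $g$ one can independently prescribe the constant value on each translate'') is exactly the content that needs proof. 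Indeed, a naive reading of that claim collides with the converse half of Lemma~\ref{Aper-lemma}: a single point $x$ in a fibre is \emph{monochromatic} on its aperiodic part, so if the shifts $\sigma^{g}x$ all landed exactly in the fibre of your base point $g_0$, the colors seen in the windows would all be forced equal. The actual mechanism is that $\sigma^{g}x$ only needs to approximate $x^{(\alpha)}$ on a finite window, so the positions that are aperiodic for $x^{(\alpha)}$ may be filled at a higher finite level for $\sigma^{g}x$, and the multi-scale block structure of $\eta$ is what lets one realize arbitrary finite colorings. You acknowledge this as the ``main obstacle'' but never close it, so as written the lower bound --- and hence $h^*_{top}(X,\sigma,\mathbb{Z})\geq\log(m)$ and $\IT_m(X)\setminus\triangle^{(m)}(X)\neq\emptyset$ --- is not established. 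Either carry out that combinatorial bookkeeping in full, or (much shorter) cite Proposition~\ref{medidas-IT} as the paper does.
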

\begin{proof}
Let $m\geq 2$, $\Sigma=\{0,1,\dots, m-1\}$ and $\eta\in \Sigma^\mathbb{Z}$ the Toeplitz element defined above with periodic structure given by $(p_i)_{i\in\mathbb{N}}$ such that $\sum_{i=1}^\infty \frac{p_{i}}{p_{i+1}}$ converges. 
Let $\overline{O_\sigma(\eta)}\subseteq \Sigma^\mathbb{Z}$ be the Toeplitz subshift associated to $\eta$ and $\overleftarrow{G}$ the $G$-odometer associated to the sequence $(p_i)_{i\in\mathbb{N}}$.
In the light of Lemma \ref{Per-lemma}, if $x,x'\in \overline{O_\sigma(\eta)}$ are distinct elements such that $\pi(x)=\pi(x')$, then $x(g)= x'(g)$ for $g\in  \bigcup_{n\in\mathbb{N}}\Per(x,\Gamma_n)=\bigcup_{n\in\mathbb{N}}\Per(x',\Gamma_n)$.
Therefore, Lemma \ref{Aper-lemma} guarantees that the map $\pi$ is at most   $m$ to $1$.
Therefore,  we conclude that $Q_{m+1}(\overline{O_\sigma(\eta)},G)\setminus \triangle^{(m+1)}(\overline{O_\sigma(\eta)})=\emptyset$. (See for instance \cite[Chapter 9]{Au89}).
Applying Proposition \ref{It-to-Reg} we guarantee that $\IN_{m+1}(\overline{O_\sigma(\eta)})\setminus\triangle^{(m+1)}(\overline{O_\sigma(\eta)})=\emptyset$.

On the other side, since this system has exactly $m$ ergodic measures, Proposition \ref{medidas-IT} guarantees that $\IT_{m}(\overline{O_\sigma(\eta)})\setminus \triangle^{(m)}(\overline{O_\sigma(\eta)})\neq \emptyset$.
The Proposition \ref{htop*} concludes the proof. 
\end{proof}

\section{Minimal actions}\label{sec-3}
In this section we present the proof of Theorem \ref{main-theo-3}. 
To complete that, we mention some facts related to cellular automata induced by a group homomorphism.
See \cite{CSVZ23} for a detailed presentation of this topic.

\medskip

Let $G_1$, $G_2$ be countable infinite groups, and $\Sigma$ a finite set. 
For each $\phi: G_1\to G_2$ group homomorphism define the map $\phi^*:\Sigma^{G_2}\to \Sigma^{G_1}$  by $\phi^*(x)=x\circ \phi$.
In this section, we denote by $\sigma_{G_i}$ the shift action of $G_i$  on $\Sigma^{G_i}$, $i\in\{1,2\}$, defined in Section \ref{G-Toeplitz}.

\begin{proposition}[{\cite[Lemma 1 and Lemma 2]{CSVZ23}}]
   Let $\phi: G_1\to G_2$ be a homomorphism group. 
   The cellular automaton $\phi^*:\Sigma^{G_2}\to \Sigma^{G_1}$ is continuous and $\phi$-equivariant.
   That is, for every $g\in G_1$ and $x\in \Sigma^{G_2}$ we have
    \begin{align*}
        \sigma_{G_1}^g\phi^*(x)=\phi^*(\sigma_{G_2}^{\phi(g)}x).
    \end{align*}
    Furthermore, if $\phi$ is surjective, then $\phi^*$ is injective.
\end{proposition}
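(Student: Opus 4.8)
The plan is to verify the three asserted properties of $\phi^*$ directly from the definition $\phi^*(x) = x \circ \phi$, treating each element of $\Sigma^{G_2}$ and $\Sigma^{G_1}$ as a function from the respective group into $\Sigma$.

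First I would check continuity. The product topology on $\Sigma^{G_i}$ has a subbasis of cylinders $[\,g \mapsto a\,] = \{y \in \Sigma^{G_i} : y(g) = a\}$ for $g \in G_i$, $a \in \Sigma$. For the preimage: $\phi^*(x) \in [\,g \mapsto a\,]$ in $\Sigma^{G_1}$ means $(x \circ \phi)(g) = x(\phi(g)) = a$, i.e. $x \in [\,\phi(g) \mapsto a\,]$ in $\Sigma^{G_2}$. Hence $(\phi^*)^{-1}([\,g\mapsto a\,]) = [\,\phi(g)\mapsto a\,]$ is open, so $\phi^*$ is continuous. (Equivalently, $\phi^*$ is $1$-Lipschitz for suitable metrics, but the cylinder computation is cleanest.)

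Next, $\phi$-equivariance. Fix $g \in G_1$, $x \in \Sigma^{G_2}$; I must show $\sigma_{G_1}^g \phi^*(x) = \phi^*(\sigma_{G_2}^{\phi(g)} x)$ as elements of $\Sigma^{G_1}$, so I evaluate both sides at an arbitrary $h \in G_1$. The left side gives $\sigma_{G_1}^g(\phi^*(x))(h) = \phi^*(x)(g^{-1}h) = x(\phi(g^{-1}h)) = x(\phi(g)^{-1}\phi(h))$, using that $\phi$ is a homomorphism. The right side gives $\phi^*(\sigma_{G_2}^{\phi(g)}x)(h) = (\sigma_{G_2}^{\phi(g)}x)(\phi(h)) = x(\phi(g)^{-1}\phi(h))$ by definition of the shift on $\Sigma^{G_2}$. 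The two expressions agree, so the identity holds.

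Finally, injectivity when $\phi$ is surjective. Suppose $\phi^*(x) = \phi^*(x')$, i.e. $x(\phi(g)) = x'(\phi(g))$ for every $g \in G_1$. Since $\phi$ is onto, every $h \in G_2$ is of the form $h = \phi(g)$ for some $g \in G_1$, hence $x(h) = x'(h)$ for all $h \in G_2$, so $x = x'$. I do not expect any genuine obstacle here: the statement is essentially a bookkeeping exercise with the definitions, and the only point requiring the homomorphism hypothesis (rather than just a map of sets) is the equivariance identity, where $\phi(g^{-1}h) = \phi(g)^{-1}\phi(h)$ is used.
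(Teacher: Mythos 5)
Your proof is correct: all three verifications (cylinder preimages for continuity, pointwise evaluation for $\phi$-equivariance using $\phi(g^{-1}h)=\phi(g)^{-1}\phi(h)$, and surjectivity of $\phi$ for injectivity of $\phi^*$) are exactly the standard direct arguments. The paper itself gives no proof — it cites Lemmas 1 and 2 of the reference — and your verification matches what that citation supplies, so there is nothing to add.
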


Next, we relate these notions with minimal subshifts.
\begin{proposition}\label{minimal}  
    If $X\subseteq \Sigma^{G_2}$ is a minimal subshift, then $\phi^*(X)\subseteq \Sigma^{G_1}$ is a minimal subshift.
\end{proposition}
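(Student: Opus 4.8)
The statement asks us to show that if $X \subseteq \Sigma^{G_2}$ is a minimal subshift, then $\phi^*(X) \subseteq \Sigma^{G_1}$ is again a minimal subshift. First I would check that $\phi^*(X)$ is actually a subshift of $\Sigma^{G_1}$, i.e. closed and $\sigma_{G_1}$-invariant. Closedness follows because $\phi^*$ is continuous (by the previous proposition) and $X$ is compact, so $\phi^*(X)$ is compact, hence closed in $\Sigma^{G_1}$. Invariance follows from the $\phi$-equivariance relation $\sigma_{G_1}^g \phi^*(x) = \phi^*(\sigma_{G_2}^{\phi(g)} x)$: for $y = \phi^*(x) \in \phi^*(X)$ and $g \in G_1$, we have $\sigma_{G_1}^g y = \phi^*(\sigma_{G_2}^{\phi(g)} x) \in \phi^*(X)$ since $X$ is $\sigma_{G_2}$-invariant.

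\textbf{Minimality.} For the minimality of $(\phi^*(X), \sigma_{G_1}, G_1)$, I would argue directly at the level of orbit closures, since $\phi^*(X)$ is a closed invariant set and minimality of a subshift is equivalent to every point having dense orbit. Take an arbitrary point $y \in \phi^*(X)$, say $y = \phi^*(x)$ with $x \in X$, and an arbitrary nonempty open subset of $\phi^*(X)$; by intersecting with a cylinder it suffices to show that the $\sigma_{G_1}$-orbit of $y$ meets every cylinder of $\phi^*(X)$. A cylinder of $\phi^*(X)$ is determined by prescribing finitely many coordinates, say on a finite set $F \subseteq G_1$, agreeing with some $\phi^*(x') = x' \circ \phi$ for $x' \in X$. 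Now the key observation: agreeing with $x' \circ \phi$ on $F \subseteq G_1$ is the same as $x'$ agreeing with $x'$ (itself) on $\phi(F) \subseteq G_2$, a finite set. By minimality of $(X, \sigma_{G_2}, G_2)$, the orbit of $x$ is dense, so there is $h \in G_2$ with $\sigma_{G_2}^{h} x$ agreeing with $x'$ on the finite set $\phi(F)$.

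\textbf{Closing the argument.} The remaining—and main—obstacle is to convert this $h \in G_2$ into an element $g \in G_1$ with $\sigma_{G_1}^{g} y$ landing in the desired cylinder of $\phi^*(X)$. This is exactly where surjectivity of $\phi$ would be needed in general: if $\phi$ is onto, pick $g \in G_1$ with $\phi(g) = h$, and then by $\phi$-equivariance $\sigma_{G_1}^{g} y = \sigma_{G_1}^{g}\phi^*(x) = \phi^*(\sigma_{G_2}^{\phi(g)} x) = \phi^*(\sigma_{G_2}^{h} x)$, which agrees with $\phi^*(x') $ on $F$ because $\sigma_{G_2}^{h}x$ agrees with $x'$ on $\phi(F)$. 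Hence the orbit of $y$ is dense and $\phi^*(X)$ is minimal. I expect the authors intend $\phi$ to be the surjective homomorphism onto $\Z$ from the setup of Theorem~\ref{main-theo-3} (or more generally assume surjectivity here), since without surjectivity the image $\phi(F)$ may fail to be ``reachable'' by the orbit in a way compatible with pulling back to $G_1$; so I would state and use the surjectivity hypothesis explicitly, or note it is in force from the ambient construction. A minor technical point to handle carefully is that $\phi$ need not be injective, so $\phi(F)$ could have fewer elements than $F$ and the cylinder condition collapses consistently — but this only helps, since fewer constraints on $x'$ are easier to satisfy.
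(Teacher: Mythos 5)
Your proof is correct and follows essentially the same route as the paper's: use minimality of $X$ to approximate $x'$ by $G_2$-shifts of $x$, lift the shifting elements to $G_1$ via surjectivity of $\phi$, and transfer with the $\phi$-equivariance and continuity of $\phi^*$ (the paper phrases the approximation with convergent sequences rather than cylinder sets, a cosmetic difference). Your observation that surjectivity of $\phi$ is an implicit standing hypothesis is accurate — the paper's own proof invokes it at exactly the same point.
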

\begin{proof}
    The previous proposition guarantees that $\phi^*(X)$ is a subshift in $\Sigma^{G_1}$.
   Suppose that $X$ is minimal subshift.
    Let $y,y'\in\phi^*(X)$.
There exist $x,x'\in X$ such that $y=\phi^*(x)$ and $y'=\phi^*(x')$.
    We aim to prove that $y'\in\overline{O_{\sigma_{G_1}}(y)}$.
    Using that $X$ is minimal, there exists a sequence $(h_i)_{i\in\mathbb{N}}\subseteq G_2$ such that $\sigma_{G_2}^{h_i}x\to x'$ when $i\to\infty$. 
    As $\phi^*$ is continuous, we obtain that $\phi^*(\sigma_{G_2}^{h_i}x)\to \phi^*(x')$ when $i\to\infty$.
    Now, the fact that $\phi$ is surjective implies the existence of $g_i\in G_1$ satisfying $\phi(g_i)=h_i$ for $i\in\mathbb{N}$.
     Finally, the $\phi$-equivariance of $\phi^*$ implies that $\sigma_{G_1}^{g_i}\phi^*(x)\to \phi^*(x')$ when $i\to\infty$ as we wanted.
\end{proof}
In the following proposition, we describe the relation between  $\phi$-equivariant maps and $\IN$-tuples of a minimal subshift. 
This gives us a generalization of \cite[Proposition 5.4 (4)]{KeLi07}.

\begin{proposition}\label{IN-tuples+phi}
Let $\phi:G_1\to G_2$ be a surjective group homomorphism, and let $X\subseteq\Sigma^{G_2}$ and $Y\subseteq\Sigma^{G_1}$ be minimal subshifts.
If $\pi:X\to Y$ is a surjective continuous map that is $\phi$-equivariant, then $(\pi\times\dots\times \pi)(\IN_n(X))=\IN_n(Y)$
for each $n\in\mathbb{N}$.
\end{proposition}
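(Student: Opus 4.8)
The statement is an equality of sets $(\pi\times\cdots\times\pi)(\IN_n(X))=\IN_n(Y)$, so I would prove the two inclusions separately. The key technical link is the independence-set definition together with the $\phi$-equivariance relation $\sigma_{G_1}^g\pi = \pi\,\sigma_{G_2}^{\phi(g)}$, which lets us transport independence sets between the two systems by pushing them forward along $\phi$ (and pulling them back along a section of $\phi$).

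\medskip

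\textbf{Inclusion $(\pi\times\cdots\times\pi)(\IN_n(X))\subseteq\IN_n(Y)$.} Let $(x_1,\dots,x_n)\in\IN_n(X)$ and set $y_i=\pi(x_i)$. Given a product neighborhood $V_1\times\cdots\times V_n$ of $(y_1,\dots,y_n)$ in $Y^n$, continuity of $\pi$ gives open neighborhoods $U_i$ of $x_i$ with $\pi(U_i)\subseteq V_i$. Since $(x_1,\dots,x_n)\in\IN_n(X)$, the tuple $(U_1,\dots,U_n)$ has arbitrarily large finite independence sets $I\subseteq G_2$; I then push each such $I$ forward to a set $J\subseteq G_1$ by choosing, for each $h\in I$, a preimage $g_h\in\phi^{-1}(h)$ (this uses surjectivity of $\phi$), with $|J|=|I|$ since distinct elements of $I$ have disjoint preimages. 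For a function $s:J\to\{1,\dots,n\}$, define $\tilde s$ on $I$ by $\tilde s(h)=s(g_h)$; applying $\phi$-equivariance coordinatewise to a point of $\bigcap_{h\in I}\sigma_{G_2}^{h^{-1}}U_{\tilde s(h)}$ and then applying $\pi$ yields a point in $\bigcap_{g\in J}\sigma_{G_1}^{g^{-1}}V_{s(g)}$, which is therefore nonempty. Hence $J$ is an independence set for $(V_1,\dots,V_n)$, and since $|J|=|I|$ can be taken arbitrarily large, $(y_1,\dots,y_n)\in\IN_n(Y)$.

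\medskip

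\textbf{Inclusion $\IN_n(Y)\subseteq(\pi\times\cdots\times\pi)(\IN_n(X))$.} Let $(y_1,\dots,y_n)\in\IN_n(Y)$. By Proposition \ref{closedtuples}(3), $\IN_n(X)$ is a closed $G_1$-invariant subset of $X^n$; since $X$ is compact and $\pi$ is continuous, $(\pi\times\cdots\times\pi)(\IN_n(X))$ is a closed subset of $Y^n$, so it suffices to produce, for each product neighborhood $V_1\times\cdots\times V_n$ of $(y_1,\dots,y_n)$, a point of $\IN_n(X)$ mapping into it. The sets $\pi^{-1}(V_i)$ are open in $X$; for suitable subbasic refinements $W_i\subseteq\pi^{-1}(V_i)$ (shrinking so the $W_i$ are cylinder sets determined by the same finite window, say), an independence set $J\subseteq G_1$ for $(V_1,\dots,V_n)$ gives rise — via $\phi$-equivariance, choosing witnesses in $X$ and noting that $\sigma_{G_1}^{g^{-1}}\pi^{-1}(V_i)=\pi^{-1}(\sigma_{G_1}^{g^{-1}}V_i)=\pi^{-1}(\sigma_{G_2}^{\phi(g)^{-1}}\cdots)$ carefully — to an independence set for the tuple $(\pi^{-1}(V_1),\dots,\pi^{-1}(V_n))$ of subsets of $X$. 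More directly: $(\pi^{-1}(V_1),\dots,\pi^{-1}(V_n))$ has $J$ as an independence set because $\bigcap_{g\in J}\sigma_{G_1}^{g^{-1}}\pi^{-1}(V_{s(g)})=\pi^{-1}\!\big(\bigcap_{g\in J}\sigma_{G_1}^{g^{-1}}V_{s(g)}\big)\neq\emptyset$. Now $\pi^{-1}(V_i)$ is closed only if $V_i$ is, so replace $V_i$ by a slightly smaller closed neighborhood $\overline{V_i'}$ of $y_i$; then $(\pi^{-1}(\overline{V_1'}),\dots,\pi^{-1}(\overline{V_n'}))$ is a tuple of closed subsets of $X$ with arbitrarily large finite independence sets, and Proposition \ref{closedtuples}(1) yields an $\IN_n$-tuple $(x_1,\dots,x_n)$ with $x_i\in\pi^{-1}(\overline{V_i'})$, hence $\pi(x_i)\in\overline{V_i'}\subseteq V_i$. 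Taking an intersection over a neighborhood basis and using closedness of $(\pi\times\cdots\times\pi)(\IN_n(X))$ finishes the inclusion.

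\medskip

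\textbf{Main obstacle.} The first inclusion is essentially bookkeeping with $\phi$-equivariance and a section of $\phi$. The real care is needed in the second inclusion: one cannot simply pull back an $\IN$-tuple pointwise, since $\pi$ need not be injective, so the argument must route through the closed-tuple criterion Proposition \ref{closedtuples}(1) and a compactness/closedness argument for the image set. The delicate point is to set up the closed neighborhoods $\overline{V_i'}$ and verify that $(\pi^{-1}(\overline{V_1'}),\dots)$ inherits arbitrarily large finite independence sets from $(V_1,\dots,V_n)$ — which is where the identity $\sigma_{G_1}^{g^{-1}}\pi^{-1}(V)=\pi^{-1}(\sigma_{G_1}^{g^{-1}}V)$ and equivariance of $\pi$ are used — and then to conclude via closedness of the image that the pointwise images of $\IN_n(X)$ exhaust $\IN_n(Y)$, not merely a dense subset.
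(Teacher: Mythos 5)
Your forward inclusion is correct and is essentially the paper's argument (pull the independence set $J'\subseteq G_2$ back through a section of $\phi$, noting distinct elements get distinct preimages). The problem is in the reverse inclusion, and it sits exactly at the point you yourself flag as delicate. Recall that $X\subseteq\Sigma^{G_2}$ carries the $G_2$-shift and $Y\subseteq\Sigma^{G_1}$ carries the $G_1$-shift, so an independence set for a tuple of subsets of $X$ must be a subset of $G_2$. Your displayed identity $\bigcap_{g\in J}\sigma_{G_1}^{g^{-1}}\pi^{-1}(V_{s(g)})=\pi^{-1}\bigl(\bigcap_{g\in J}\sigma_{G_1}^{g^{-1}}V_{s(g)}\bigr)$ is not meaningful, because $G_1$ does not act on $X$; a set $J\subseteq G_1$ cannot serve as an independence set for $(\pi^{-1}(V_1),\dots,\pi^{-1}(V_n))$. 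The correct relation coming from $\phi$-equivariance is $\sigma_{G_2}^{\phi(g)^{-1}}\pi^{-1}(V)=\pi^{-1}(\sigma_{G_1}^{g^{-1}}V)$, and what an independence set $J\subseteq G_1$ for $(V_1,\dots,V_n)$ actually gives you is that $\phi(J)\subseteq G_2$ is an independence set for the preimage tuple.

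This is where the real content of the reverse inclusion lies: $\phi$ may collapse elements of $J$, so a priori $|\phi(J)|$ could be much smaller than $|J|$, and you would fail to obtain arbitrarily large finite independence sets upstairs. The paper rules this out as follows: after reducing (via Remark \ref{equalIT}) to the case where the coordinates are pairwise distinct, it takes closed balls with $\overline{B_k(y_1)}\cap\overline{B_k(y_i)}=\emptyset$ and argues that if $g\neq h$ in $J$ satisfied $\phi(g)=\phi(h)$, then independence applied to $s_0(g)=1$, $s_0(h)=2$, together with surjectivity of $\pi$, would produce a single point of $X$ lying in the two disjoint sets $\pi^{-1}(\overline{B_k(y_1)})$ and $\pi^{-1}(\overline{B_k(y_2)})$, a contradiction; hence $\phi$ is injective on $J$ and $|\phi(J)|=|J|$. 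Your proposal never confronts this, so the step asserting that the closed preimage tuple ``inherits arbitrarily large finite independence sets'' is unjustified as written. The rest of your reverse inclusion — closed neighborhoods, Proposition \ref{closedtuples}(1), and closedness of the image of the compact set $\IN_n(X)$ — is sound and amounts to the paper's subsequence argument; to complete the proof you need to insert the injectivity of $\phi$ on independence sets and the reduction to pairwise distinct coordinates.
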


\begin{proof}
Let $(y_1,\ldots,y_n)\in (\pi\times \dots \times \pi)(\IN_n(X))$ 
and $V_1\times\cdots\times V_n$ a product neighborhood of $(y_1,\ldots,y_n)$.
There exists $(x_1,\ldots,x_n)\in \IN_n(X)$ such that
$y_i=\pi(x_i)$, for all $i\in\{1,\ldots,n\}$.
Let $U_i=\pi^{-1}(V_i)$, for every $i\in\{1,\ldots,n\}$.
If $J'\subseteq G_2$ is an independence set for $(U_1,\ldots, U_n)$,
for each $h\in J'$ consider $g_h\in G_1$ such that $\phi(g_h)=h$.

We claim that  $J=\{g_h\in G_1: h\in J'\}\subseteq G_1$ is an independence set for $(V_1,\dots, V_n)$.
Indeed, let $s:J\to\{1,\dots,n\}$ be an arbitrary function and 
define $s':J'\to\{1,\dots, n\}$ as $s'(h):=s(g_h)$, $h\in J'$.
Using that $J'\subseteq G_2$ is an independence set for $(U_1,\dots, U_n)$, there exists $x\in \bigcap_{h\in J'}\sigma_{G_2}^{h^{-1}} U_{s'(h)}$.
Consequently, $\pi(\sigma_{G_2}^{h}x)=\sigma_{G_1}^{g_h}\pi(x)\in V_{s(g_h)}$ for each $g_h\in J$.
Therefore, $\pi(x)\in\bigcap_{g_h\in J} \sigma_{G_1}^{g_h^{-1}}U_{s(g_h)}$, which implies that $J$ is an independence set for $(V_1,\dots, V_n)$.
Furthermore, we have that $|J|=|J'|$.
Since $(x_1,\ldots,x_n)\in \IN_n(X)$, the tuple $(U_1,\ldots,U_n)$ has arbitrarily large finite independence sets.
Hence, $(y_1,\dots, y_n)\in \IN_n(Y)$.

\medskip

Conversely, let $(y_1,\ldots,y_n)\in \IN_n(Y)$. 
Assume that $y_1\neq y_i$ for each $i\in\{2,\dots, n\}$.
Let $\overline{B_k(y)}$ denote the closed ball in $Y$ of radius $\frac{1}{k}$ and center in $y\in Y$, i.e., $\overline{B_k(y)}=\overline{B(y,\frac{1}{k})}$, $k\in\mathbb{N}$.
Since $Y$ is a metric space, there exists $K\in\mathbb{N}$ such that $\overline{B_k(y_1)}\cap \overline{B_k(y_i)}=\emptyset$ for every $i\in\{2,\dots,n\}$ and $k\geq K$. 
As before, for $k\geq K$, we can guarantee that if $J$ is an independence set for $(\overline{B_k(y_1)},\dots, \overline{B_k(y_n)})$, then $\phi(J)$ is an independence set for $(\pi^{-1}(\overline{B_k(y_1)}),\dots, \pi^{-1}(\overline{B_k(y_n)}))$.

Now, we  prove that $|\phi(J)|=|J|$.
Let $g,h\in J$ be such that $g\neq h$.
Consider $s_0:J\to\{1,\ldots,n\}$ given by $s_0(g)=1$ and $s_0(h)=2$.
Then, there exists $y\in \pi(X)$ such that $\sigma_{G_1}^gy\in \overline{B_k(y_1)}$ and $\sigma_{G_1}^hy\in \overline{B_k(y_2)}$.
Using that $\pi$ is surjective, there exists $x\in X$ such that $\pi(x)=y$ and by the $\phi$-equivariance of $\pi$ we conclude
$\pi(\sigma_{G_2}^{\phi(g)}x)=\sigma_{G_1}^g y$ and $\pi(\sigma_{G_2}^{\phi(h)}x)=\sigma_{G_1}^h y$.
Thus,  $\sigma_{G_2}^{\phi(g)}x\in  \pi^{-1}(\overline{B_k(y_1)})$ and $\sigma_{G_2}^{\phi(h)}x\in  \pi^{-1}(\overline{B_k(y_2)})$, which is only possible if $\phi(g)\neq \phi(h)$ since these sets are disjoint.

For each $k\geq K$, the proposition \ref{closedtuples} guarantees that there exists an $n$-$\IN$ tuple $(x_k^1,\ldots x_k^n)$ in $ \pi^{-1}(\overline{B_k(y_1)})\times \cdots\times\pi^{-1}(\overline{B_k(y_n)})$. 
Since $\IN_n(X)$ is closed, then, possibly after taking a subsequence, $\lim_{k\to\infty} x^i_k=z_i$ for every $i\in \{1,\ldots,n\}$ and $(z_1,\dots, z_n)\in\IN_n(X)$.
On the other hand, the definition of the tuples $(x_k^1,\dots, x_k^n)$ guarantees $\lim_{k\to\infty}\pi(x_k^i)=y_i$, $1\leq i\leq n$.
Therefore, we obtain $y_i=\pi(z_i)$, $1\leq i\leq n$, and we conclude the proof by assuming that the elements in the $n$-tuple are pairwise different.
Otherwise, we use Remark \ref{equalIT} to restrict to the previous case.
\end{proof}
\begin{remark}
The conclusion of the previous proposition  holds even if we replace $\IN$-tuples with $\IT$-tuples.
\end{remark}

\begin{proof}[Proof of Theorem \ref{main-theo-3}] Let $G$ be a group such that $\mathbb{Z}$ is a homomorphic image of $G$ with the group homomorphism given by $\phi: G\to \mathbb{Z}$.
Let $m\in\mathbb{N}$ and let $X\subseteq \{0,\dots, m-1\}^\mathbb{Z}$ be the Toeplitz subshift presented in subsection \ref{ZToeplitz}.
From Proposition \ref{minimal}, we know that $\phi^*(X)$ is a minimal subshift of $\{0,\dots, m-1\}^G$ such that $\Ker(\phi)$ is contained in the stabilizer of each element $y\in\phi^*(X)$, i.e., $\Ker(\phi)\subseteq \bigcap_{y\in\phi^*(X)}\Stab(y)$.
Therefore, using the fact that $\phi^*$ is injective along with Theorem \ref{theo-main-2} and Proposition \ref{IN-tuples+phi} we conclude that $\IN_m(\phi^*(X))\setminus \triangle^{(m)}(\phi^*(X))\neq \emptyset$ and $\IN_{m+1}(X)\setminus\triangle^{(m+1)}(X)=\emptyset$.
The same  conclusion holds if we replace $\IN$ with $\IT$.
Hence, we conclude the proof by using Proposition \ref{htop*}.
\end{proof}

\section{Free minimal actions}\label{sec-4}

Recall that every infinite minimal $\mathbb{Z}$-dynamical system is free. 
However, when the acting group is different from $\mathbb{Z}$, this is no longer true. 
Consequently, the notion of freeness becomes more relevance  (see, for instance, \cite{Ga20}, \cite{HjMo06}).
Moreover, this notion has proven useful in characterizing the residually finiteness of a group through the existence of some free Toeplitz subshift (see \cite{Kr07}).

\medskip

In this section, we provide a proof of Theorem \ref{theo-main}. 
That is, we describe the zero entropy minimal free actions with bounded positive sequence entropy.
Firstly, we focus on finitely generated abelian groups and afterward, we provide a proof for groups that have a normal subgroup isomorphic to $\mathbb{Z}^d$ for some $d\in\mathbb{N}$. 
To this end, we  begin by recalling the construction of the Toeplitz arrays made in \cite{CeCoGo23}.
\subsection{Toeplitz arrays}\label{Sec-3}
Let $G$ be a countable infinite group, 
 $\Sigma=\{1,2,\dots, n\}$ for  $n\geq 2$,  and $(\Gamma_i)_{i\in\mathbb{N}}$ a strictly decreasing sequence of normal subgroups of finite index of $G$ such that $\bigcap_{i\in\mathbb{N}} \Gamma_i=\{1_G\}$.
Lemma 2.9 in \cite{CeCoGo23} guarantees the existence of an increasing sequence of finite sets $(D_i)_{i\in\mathbb{N}}$  of $G$ such that for each $i\geq 1$,
\begin{itemize}
    \item $D_i$ is a fundamental domain of $G/\Gamma_i$.
    \item $\{1_G\}\subseteq D_i\subseteq D_{i+1}$.
    \item $G=\bigcup_{i\in\mathbb{N}} D_i$.
    \item For each $i<j$, $D_j=\bigcup_{\gamma\in D_{j}\cap\Gamma_i}\gamma D_i$.
\end{itemize}

Let $(\alpha_i)_{i\in\mathbb{N}}\subseteq \Sigma$ be the sequence given by $\alpha_i=j\in\Sigma $ when $i\equiv j \pmod{n}$. 
The sequence $\eta\in\Sigma^{G}$ is defined as follows:

\medskip

{\bf Step 1:} Let $J(0)=1_G$, and define $\eta(g)=\alpha_1$ for every $g\in \Gamma_1$.

\medskip

{\bf Step 2:}
Define $ J(1)=D_1\setminus \Gamma_1$.
For every $h\in J(1)$ and $\gamma\in \Gamma_2$, 
 $\eta(\gamma h)=\alpha_2$. 

\medskip

{\bf Step m+1:} Consider
\begin{align*}
    J(m)=D_{m}\setminus \bigcup_{i=0}^{m-1} J(i)\Gamma_{i+1}.
\end{align*}

Define $\eta(\gamma h)=\alpha_{m+1}$ for every $h\in J(m)$ and $\gamma\in \Gamma_{m+1}$.

\medskip

This construction defines a Toeplitz array $ \eta$ such that $(\Gamma_i)_{i\in\mathbb{N}}$ is a period structure for $\eta$. 
Therefore, there exists a factor map $\pi: \overline{O_\sigma(\eta)}\to \overleftarrow{G}$, where $\overleftarrow{G}$ is the $G$-odometer associated to $(\Gamma_i)_{i\in\mathbb{N}}$ and also it is the maximal equicontinuous factor of $\overline{O_\sigma(\eta)}$.
Furthermore, when $G$ is amenable and
\begin{align}\label{Gammacondition}
    [\Gamma_i:\Gamma_{i+1}]>\frac{1}{1-1/2^{(1/2)^{i+1}}},\;\;i\in\mathbb{N},
\end{align} 
$\overline{O_\sigma(\eta)}$ has exactly $n$ ergodic measures with zero entropy.

\medskip

The following lemma and corollary were proved in \cite{CeCoGo23}. 
We provide the proofs of these statements in order to clarify the discussion in Section \ref{Sec-virtually}. 
\begin{lemma}[{\cite[Lemma 4.11]{CeCoGo23}}]\label{auxiliar}
For every $i\geq 1$ and $\gamma\in \Gamma_i$, there exists $l\geq i$ such that $\gamma J(i)\subseteq \Gamma_{l+1}J(l)$. 
\end{lemma}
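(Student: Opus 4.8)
\textbf{Proof proposal for Lemma \ref{auxiliar}.}

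The plan is to unwind the recursive definition of the sets $J(m)$ and exploit the lattice-like structure of the fundamental domains $(D_i)$. Fix $i\geq 1$ and $\gamma\in\Gamma_i$. First I would observe that since $G=\bigcup_{j\in\mathbb{N}}D_j$, the element $\gamma$ lies in $D_l$ for all sufficiently large $l$; choose such an $l\geq i$ large enough that, in addition, $\gamma$ is not erased at an earlier stage — more precisely, I want $\gamma\in D_l\setminus\bigcup_{k=0}^{l-1}J(k)\Gamma_{k+1}$ to fail to hold in a controllable way, or rather I want to show $\gamma J(i)$ lands inside $\Gamma_{l+1}J(l)$. The key structural input is the partition property: for each $l$, the translates $\{\Gamma_{k+1}J(k):0\leq k\leq l\}$ together with $\Gamma_{l+1}(D_l\setminus\bigcup_{k=0}^{l-1}J(k)\Gamma_{k+1})=\Gamma_{l+1}J(l)$ cover $G$ (this is essentially how $\eta$ was defined so that every $g\in G$ eventually gets a value), and these pieces are disjoint.

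The main steps, in order, would be: (1) Show that for $l\geq i$ and $h\in J(i)$, the product $\gamma h$ still represents a coset that is "captured" by level $l$ — since $\gamma\in\Gamma_i$ and $h\in J(i)\subseteq D_i$, and using the compatibility $D_l=\bigcup_{\delta\in D_l\cap\Gamma_i}\delta D_i$, write $\gamma h$ relative to the finer domain $D_l$: there is $\delta\in D_l\cap\Gamma_i$ and $d\in D_i\subseteq D_l$ with $\gamma h\in \Gamma_{?}\delta D_i$; the point is to locate $\gamma h$ up to an element of $\Gamma_{l+1}$ inside $D_l$. (2) Argue that the $D_l$-representative of $\gamma h$ does not lie in $\bigcup_{k=0}^{l-1}J(k)\Gamma_{k+1}$ for $l$ large: here I would use that $h\in J(i)$ means precisely $h\notin\bigcup_{k=0}^{i-1}J(k)\Gamma_{k+1}$, that the sets $J(k)\Gamma_{k+1}$ are $\Gamma_{k+1}$-invariant hence $\Gamma_i$-invariant-compatible for $k\geq i-1$... wait, more carefully, since $\gamma\in\Gamma_i\subseteq\Gamma_{k+1}$ for $k+1\leq i$, left multiplication by $\gamma$ preserves each set $J(k)\Gamma_{k+1}$ for $k\leq i-1$, so $\gamma h\notin\bigcup_{k=0}^{i-1}J(k)\Gamma_{k+1}$ as well. (3) Choose $l\geq i$ with $\gamma h\in D_l$ for every (finitely many) $h\in J(i)$, and large enough that $\gamma h$ avoids the finitely many later pieces $J(k)\Gamma_{k+1}$ for $i\leq k\leq l-1$ that it could fall into — since the pieces partition $G$ and $\gamma h$ gets some value at some finite stage, once $l$ exceeds that stage for all $h\in J(i)$ we get $\gamma h\in \Gamma_{l+1}J(l)$, i.e. $\gamma J(i)\subseteq\Gamma_{l+1}J(l)$.

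The delicate point — the main obstacle — is step (2)–(3): controlling \emph{uniformly over all $h\in J(i)$} the stage at which $\gamma h$ receives its symbol, and ruling out that $\gamma h$ slips into some intermediate $J(k)\Gamma_{k+1}$ with $k>i$ in a way that never stabilizes. This is handled because $J(i)$ is \emph{finite}, so we only need $l$ to dominate finitely many stages; and because $\gamma\in\Gamma_i$ together with the nested-domain compatibility $D_l=\bigcup_{\gamma'\in D_l\cap\Gamma_i}\gamma' D_i$ guarantees $\gamma J(i)$ sits inside a union of $\Gamma_i$-translates of $D_i$ within $D_l$, keeping everything within $D_l$ once $l$ is large. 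I would also double-check the edge case $i=1$ (where $J(0)=\{1_G\}$) separately, though it follows from the same argument. Once the lemma is in place, the corollary referenced after it should follow by a direct application combined with the definition of $\eta$ on the sets $\Gamma_{l+1}J(l)$.
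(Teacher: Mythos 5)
There is a genuine gap in step (3), and it is fatal to the strategy as described. The sets $\Gamma_{k+1}J(k)$, $k\geq 0$, are pairwise disjoint: $\Gamma_{k+1}J(k)$ is exactly the set of positions that receive their symbol at step $k+1$ and no other, since $J(l)=D_l\setminus\bigcup_{k=0}^{l-1}\Gamma_{k+1}J(k)$. Consequently each $\gamma h$ lies in exactly one piece $\Gamma_{k(h)+1}J(k(h))$, and once $l>k(h)$ you have $\gamma h\notin\Gamma_{l+1}J(l)$ --- taking $l$ ``large enough that it exceeds the stage at which $\gamma h$ gets its value'' pushes $\gamma h$ \emph{out of} $\Gamma_{l+1}J(l)$, not into it. What you would get from your argument is only $\gamma J(i)\subseteq\bigcup_{k\geq i}\Gamma_{k+1}J(k)$, which is the easy half (and is exactly your correct step (2): $\gamma\Per(\eta,\Gamma_i)=\Per(\eta,\Gamma_i)$ because $\gamma\in\Gamma_i\subseteq\Gamma_{k+1}$ for $k<i$, so $\gamma J(i)$ misses $\Per(\eta,\Gamma_i)$). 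The actual content of the lemma is that all elements of $\gamma J(i)$ land in the \emph{same} piece, i.e.\ that $k(h)$ is independent of $h$; your proposal never proves this, and finiteness of $J(i)$ is a red herring here. A concrete check with $G=\mathbb{Z}$, $p_1=3$, $p_2=9$, $p_3=27$: one finds $3\cdot J(1)=\{4,5\}\subseteq\Gamma_3J(2)$, so the lemma holds with $l=2$ and with no larger $l$.

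The missing idea is to take $l$ \emph{minimal} (not large) with $\gamma J(i)\cap\Gamma_{l+1}J(l)\neq\emptyset$, which is legitimate by your step (2). Pick a witness $u\in J(i)$ with $\gamma u=\gamma_{l+1}v_l$, $v_l\in J(l)$, $\gamma_{l+1}\in\Gamma_{l+1}$; writing $v_l=\gamma'v$ with $\gamma'\in\Gamma_i\cap D_l$ and $v\in D_i$ (via $D_l=\bigcup_{\delta\in D_l\cap\Gamma_i}\delta D_i$) forces $v=u$ and $\gamma=\gamma'\gamma_{l+1}'$ for some $\gamma_{l+1}'\in\Gamma_{l+1}$. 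This algebraic factorization of $\gamma$ then propagates to every $s\in J(i)$: by normality $\gamma s=\gamma's\gamma_{l+1}''\in\Gamma_{l+1}\gamma'D_i\subseteq\Gamma_{l+1}D_l\subseteq\Gamma_{l+1}J(l)\cup\bigcup_{k=0}^{l-1}\Gamma_{k+1}J(k)$, and the minimality of $l$ together with step (2) rules out all the pieces with $k<l$. That single-witness-to-all-of-$J(i)$ propagation is the step your outline lacks.
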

\begin{proof}
Since $J(i)=D_i\setminus \Per(\eta, \Gamma_i)$, we have $\gamma J(i)\cap \Per(\eta, \Gamma_i)=\emptyset$. This implies that
$$
\gamma J(i)\subseteq \bigcup_{l\geq i}\Gamma_{l+1}J(l).
$$
Let $l=\min\{k\geq i: \gamma J(i)\cap \Gamma_{k+1}J(k)\neq \emptyset\}$.  Let $u\in J(i)$ be such that $\gamma u= \gamma_{l+1}v_l $, for some $v_l\in J(l)$ and $\gamma_{l+1}\in \Gamma_{l+1}$. Since $v_l\in D_l$, there exist $v\in D_i$ and $\gamma'\in \Gamma_i\cap D_l$ such that $v_l=\gamma'v$. 
The relation $\gamma u= \gamma_{l+1}v_l $ implies $v=u$ and $\gamma=\gamma'\gamma_{l+1}'$, for some $\gamma_{l+1}'\in \Gamma_{l+1}$.  Thus if $s\in J(i)$ then $\gamma s= \gamma'\gamma_{l+1}'s=\gamma's\gamma_{l+1}''$, for some $\gamma_{l+1}''\in \Gamma_{l+1}$. 
This implies that $\gamma s\in \Gamma_{l+1}D_l\subseteq \Gamma_{l+1}J(l) \cup \bigcup_{k=0}^{l-1}\Gamma_{k+1}J(k)$.
The choice of $l$ implies that $\gamma s\in \Gamma_{l+1}J(l)$,  and then $\gamma J(i)\subseteq \Gamma_{l+1}J(l)$.
\end{proof}

\begin{corollary}[{\cite[Corollary 4.12]{CeCoGo23}}]\label{partition}
For every $i\geq 0$ and $\gamma\in \Gamma_i$, there exists $\alpha\in \Sigma$ such that 
$$
\eta(g)=\alpha \mbox{ for every } g\in \gamma J(i).
$$
\end{corollary}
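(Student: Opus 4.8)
\textbf{Proof proposal for Corollary \ref{partition}.}

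The plan is to proceed by induction on $i$, using Lemma \ref{auxiliar} together with the defining construction of $\eta$ to promote the statement from the ``diagonal'' case (where $\gamma J(i)$ is a single coset translate that the construction colours monochromatically at some step) to the general case. First I would handle the base of the induction and the trivial instances: for $\gamma \in \Gamma_i$ with $\gamma J(i)$ lying entirely inside the block that receives colour $\alpha_{i+1}$ at Step $i+1$ of the construction — that is, $\gamma J(i) \subseteq \Gamma_{i+1} J(i)$ — the conclusion is immediate with $\alpha = \alpha_{i+1}$, since by construction $\eta(\gamma' h) = \alpha_{i+1}$ for all $h \in J(i)$, $\gamma' \in \Gamma_{i+1}$. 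The point of Lemma \ref{auxiliar} is precisely that the general case reduces to this one: given $\gamma \in \Gamma_i$, the lemma produces $l \geq i$ with $\gamma J(i) \subseteq \Gamma_{l+1} J(l)$.

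Next I would exploit that inclusion. Write each element of $\gamma J(i)$ as $\gamma_{l+1} v_l$ with $\gamma_{l+1} \in \Gamma_{l+1}$ and $v_l \in J(l)$; by the construction at Step $l+1$ every such element is assigned the value $\alpha_{l+1}$. Hence $\eta$ is constant and equal to $\alpha := \alpha_{l+1}$ on all of $\gamma J(i)$, which is exactly the assertion. In fact this shows the corollary follows directly from Lemma \ref{auxiliar} without needing a genuine induction — the only subtlety is the case $i = 0$, where $J(0) = \{1_G\}$ and $\Gamma_0$ should be read as $G$ (so $\gamma$ ranges over all of $G$), and the statement just says each $\eta(\gamma)$ equals some element of $\Sigma$, which is vacuous; for $i \geq 1$ the argument above applies verbatim. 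I would also remark that for $g \in \gamma J(i)$ to have $\eta(g)$ defined at all one uses $G = \bigcup_m D_m$ and the fact that $(\Gamma_i)$ is a period structure, so every $g \in G$ is coloured at some finite step.

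The main obstacle — really the only one — is verifying that the decomposition $\gamma J(i) \subseteq \Gamma_{l+1} J(l)$ from Lemma \ref{auxiliar} interacts correctly with the \emph{order} in which the construction assigns colours: one must be sure that the elements of $\gamma J(i)$ are not coloured at some \emph{earlier} step $k+1 < l+1$ (which would override the value $\alpha_{l+1}$), and conversely that they genuinely do get coloured at step $l+1$ rather than being deferred. Both are controlled by the choice $l = \min\{k \geq i : \gamma J(i) \cap \Gamma_{k+1}J(k) \neq \emptyset\}$ inside the proof of Lemma \ref{auxiliar} and by the disjointness of the sets $J(k)\Gamma_{k+1}$ (equivalently $\Gamma_{k+1}J(k)$, since the $\Gamma_k$ are normal) across distinct $k$, which is built into the definition $J(m) = D_m \setminus \bigcup_{i=0}^{m-1} J(i)\Gamma_{i+1}$. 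Once that bookkeeping is in place the corollary is immediate, so I would keep the write-up to a few lines citing Lemma \ref{auxiliar} and the construction.
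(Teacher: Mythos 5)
Your argument is correct and is essentially the paper's own proof: dispose of $i=0$ trivially, invoke Lemma \ref{auxiliar} to get $l\geq i$ with $\gamma J(i)\subseteq \Gamma_{l+1}J(l)$, and conclude $\eta\equiv\alpha_{l+1}$ on $\gamma J(i)$ directly from Step $l+1$ of the construction. Your extra remarks (that no genuine induction is needed, and that the disjointness of the sets $\Gamma_{k+1}J(k)$ built into the definition of $J(m)$ guarantees the colouring is unambiguous) are accurate but just make explicit what the paper leaves implicit.
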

\begin{proof}
The case $i=0$ is trivial. Suppose that $i\geq 1$ and $\gamma\in \Gamma_i$.

From Lemma \ref{auxiliar}, there exists $l\geq i$ such that $\gamma J(i)\subseteq \Gamma_{l+1}J(l)$. 
By the definition of $\eta$ we get $\eta(g)=\alpha_{l+1}$, for every $g\in \gamma J(i)$.   
\end{proof}

 For  $x\in \overline{O_\sigma(\eta)}$, let $\pi(x)=(t_i(x)\Gamma_i)_{i\in\mathbb{N}}\in \overleftarrow{G}$ be the image of $x$ in $\overleftarrow{G}$, $t_i(x)\in D_i$, and  $\Aper(x)$ the subset in $G$ given by   \begin{align*}
     \Aper(x)=G\setminus \bigcup_{i\in\mathbb{N}}\Per(x,t_i(x)^{-1}\Gamma_it_i(x)).
     \end{align*}
     \begin{remark}\label{rem-Aper}
The subset $\Aper(x)$ depends only on $\pi(x)$. That is, for $x,y\in\pi^{-1}(\{\pi(x)\})$ it holds $\Aper(x)=\Aper(y)$.
Moreover, by the definition of $\pi:\overline{O_\sigma(\eta)}\to\overleftarrow{G}$ and Lemma \ref{Per-lemma}, we obtain $x(g)=y(g)$, for each $g\in G\setminus \Aper(x)$.
     \end{remark}

\begin{lemma}\label{aper-const}
    
Let $x\in \overline{O_\sigma(\eta)}$ and $\pi(x)=(t_i(x)\Gamma_i)_{i\in\mathbb{N}}$, where $t_i(x)\in D_i$ for each $i\in\mathbb{N}$. 
It holds that  the map $x|_{t_i(x)^{-1}\gamma J(i)}$ is constant for each  $i\in\mathbb{N}$ and  $\gamma\in \Gamma_i$/
That is, there exists $\alpha\in\Sigma$ such that $x(d)=\alpha$ for every  $d\in t_i(x)^{-1}\gamma J(i)$.
In particular,  $x(d)=\alpha$ for every $d\in (t_i(x)^{-1}\gamma D_i)\cap \Aper(x)$.
\end{lemma}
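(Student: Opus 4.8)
The statement to prove is Lemma \ref{aper-const}: for $x\in\overline{O_\sigma(\eta)}$ with $\pi(x)=(t_i(x)\Gamma_i)_{i\in\mathbb N}$, the restriction $x|_{t_i(x)^{-1}\gamma J(i)}$ is constant for every $i\in\mathbb N$ and $\gamma\in\Gamma_i$; and consequently $x(d)=\alpha$ for every $d\in(t_i(x)^{-1}\gamma D_i)\cap\Aper(x)$.

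The plan is to transport the already-established fact for $\eta$ itself — Corollary \ref{partition}, which says $\eta$ is constant on each translate $\gamma J(i)$ with $\gamma\in\Gamma_i$ — over to an arbitrary $x$ in the orbit closure via the factor map $\pi$. First I would observe that, by definition of $\pi$ and the set $C_i$, we have $\sigma^{t_i(x)}x\in C_i$, i.e. $\Per(\sigma^{t_i(x)}x,\Gamma_i,\alpha)=\Per(\eta,\Gamma_i,\alpha)$ for every $\alpha\in\Sigma$. I would then apply Lemma \ref{no-normal} to rewrite this as $\Per(x,t_i(x)^{-1}\Gamma_i t_i(x),\alpha)=t_i(x)^{-1}\Per(\eta,\Gamma_i,\alpha)$. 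Now fix $i$ and $\gamma\in\Gamma_i$. By Corollary \ref{partition} applied to $\eta$, there is a single $\alpha\in\Sigma$ with $\eta(g)=\alpha$ for all $g\in\gamma J(i)$. But more is true: since $\gamma J(i)\subseteq\Gamma_{l+1}J(l)$ for a suitable $l\ge i$ by Lemma \ref{auxiliar}, and $\eta$ takes the constant value $\alpha_{l+1}$ on all of $\Gamma_{l+1}J(l)$, the set $\gamma J(i)$ actually lies in $\Per(\eta,\Gamma_{l+1},\alpha)$, hence in $\Per(\eta,\Gamma_i)$ — so $\gamma J(i)\subseteq\Per(\eta,\Gamma_l,\alpha)$ for some level, and in particular $\gamma J(i)\subseteq\Per(\eta,\Gamma_i,\alpha)$ need not hold, but $\gamma J(i)$ does lie in the \emph{union} $\bigcup_{\alpha}\Per(\eta,\Gamma_{l+1},\alpha)$ with all of it attached to the single symbol $\alpha=\alpha_{l+1}$. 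The cleanest route is therefore: work at level $l+1$. We have $\gamma J(i)\subseteq\Per(\eta,\Gamma_{l+1},\alpha_{l+1})$. Pulling back through $t_i(x)$ and using Lemma \ref{no-normal} at level $l+1$ (noting $t_i(x)^{-1}\gamma J(i)$ versus $t_{l+1}(x)$ requires care — see the obstacle below), we get that $x$ is constant equal to $\alpha_{l+1}$ on $t_i(x)^{-1}\gamma J(i)$, because that translate sits inside a periodic part of $x$ on which the symbol is forced. This gives the first assertion.

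For the ``in particular'' clause: decompose $t_i(x)^{-1}\gamma D_i$. Since $D_i$ is covered, for every $j\ge i$, by the pieces $\bigcup_{i'=0}^{j-1}J(i')\Gamma_{i'+1}$ together with the still-undefined cells, and $\gamma\Gamma_i=\Gamma_i\gamma$ (normality of $\Gamma_i$), I would write $\gamma D_i$ as a disjoint union of translates $\gamma'J(i')$ with $i'\le i$, $\gamma'\in\Gamma_{i'}$ — this is exactly the partition structure from the construction, via the fourth bullet point $D_j=\bigcup_{\gamma\in D_j\cap\Gamma_i}\gamma D_i$ iterated down, together with the definition of the $J(i')$. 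On each such piece $x$ is constant by the part just proved. Intersecting with $\Aper(x)=G\setminus\bigcup_i\Per(x,t_i(x)^{-1}\Gamma_it_i(x))$ removes precisely the cells where the value of $x$ is dictated by $\eta$ at some level below — so the surviving cells in $t_i(x)^{-1}\gamma D_i\cap\Aper(x)$ all belong to the single piece $t_i(x)^{-1}\gamma J(i)$ (the ``top'' piece, the one not yet periodic at level $i$), on which $x$ is the constant $\alpha$. Hence $x(d)=\alpha$ for all such $d$.

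\textbf{Main obstacle.} The delicate point is bookkeeping the conjugation by $t_i(x)$ and making sure the level-$l+1$ periodicity of $\eta$ on $\gamma J(i)$ translates to periodicity of $x$ on $t_i(x)^{-1}\gamma J(i)$ \emph{for the same symbol}. Lemma \ref{no-normal} relates $\Per(\sigma^g x,\Gamma,\alpha)$ to $g\Per(x,g^{-1}\Gamma g,\alpha)$, and one must check that the coherence of the odometer coordinates, $\varphi_n(t_{n+1}(x)\Gamma_{n+1})=t_n(x)\Gamma_n$, is exactly what is needed so that the level-$i$ and level-$(l+1)$ descriptions of $\Aper(x)$ and of the periodic cells agree; i.e. that $t_i(x)^{-1}\Gamma_i t_i(x)$ and $t_{l+1}(x)^{-1}\Gamma_{l+1}t_{l+1}(x)$ are compatible via $t_{l+1}(x)\in t_i(x)\Gamma_i$. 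Since each $\Gamma_i$ is normal in $G$, conjugates $g^{-1}\Gamma_i g=\Gamma_i$, which collapses much of this — so in fact $\Per(x,t_i(x)^{-1}\Gamma_it_i(x),\alpha)=\Per(x,\Gamma_i,\alpha)$ and Lemma \ref{no-normal} simplifies to $\Per(\sigma^{t_i(x)}x,\Gamma_i,\alpha)=t_i(x)\Per(x,\Gamma_i,\alpha)$. That normality is the load-bearing simplification, and once it is invoked the argument is essentially the pullback of Corollary \ref{partition} through $\sigma^{t_i(x)^{-1}}$, with the $\Aper$ statement following from the disjointness of the $J(i')\Gamma_{i'+1}$ in the construction.
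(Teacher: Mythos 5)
Your strategy for the main assertion --- pulling Corollary \ref{partition} back through the factor map by locating $\gamma J(i)$ inside $\Per(\eta,\Gamma_{l+1},\alpha_{l+1})$ and then ``forcing'' the symbol on $t_i(x)^{-1}\gamma J(i)$ --- has a genuine gap. What $\pi(x)=(t_n(x)\Gamma_n)_{n\in\mathbb{N}}$ gives you (via Lemma \ref{no-normal} and normality) is $\Per(x,\Gamma_{l+1},\alpha)=t_{l+1}(x)^{-1}\Per(\eta,\Gamma_{l+1},\alpha)$, so the symbol of $x$ is forced on $t_{l+1}(x)^{-1}\gamma J(i)$, \emph{not} on $t_i(x)^{-1}\gamma J(i)$. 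These two sets differ by left multiplication by $\delta'=t_{l+1}(x)t_i(x)^{-1}$, which lies in $\Gamma_i$ by coherence and normality but in general not in $\Gamma_{l+1}$; Lemma \ref{auxiliar} then only places $\delta'\gamma J(i)$ inside $\Gamma_{l''+1}J(l'')$ for some possibly larger $l''$, carrying the value $\alpha_{l''+1}\neq\alpha_{l+1}$, and the level $l''$ you are chased to depends in turn on $t_{l''+1}(x)$, so the argument never closes. The obstruction is structural: $\pi(x)$ determines $x$ only on $G\setminus\Aper(x)=\bigcup_n\Per(x,\Gamma_n)$, whereas $t_i(x)^{-1}\gamma J(i)$ meets $\Aper(x)$ whenever $x$ is non-Toeplitz (indeed $(t_i(x)^{-1}\gamma D_i)\cap\Aper(x)\subseteq t_i(x)^{-1}\gamma J(i)$ --- your own ``in particular'' clause). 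On $\Aper(x)$ the values of $x$ are not a function of $\pi(x)$ at all, which is exactly why the fibres of $\pi$ can contain several points; so no bookkeeping with $\Per$-sets can establish constancy there, and the claimed constant value $\alpha_{l+1}$ is also not correct in general.

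The missing ingredient, which is the one the paper uses, is that $x$ lies in the orbit \emph{closure} of $\eta$: take $g_k$ with $\sigma^{g_k}\eta\to x$; since $\sigma^{t_i(x)^{-1}}C_i$ is clopen, eventually $g_k=t_i(x)^{-1}\gamma_i^k$ with $\gamma_i^k\in\Gamma_i$, and for $d=t_i(x)^{-1}\gamma d_i$ one computes $\sigma^{g_k}\eta(d)=\eta\bigl((\gamma_i^k)^{-1}\gamma d_i\bigr)$, which is constant in $d_i\in J(i)$ by Corollary \ref{partition} applied to $(\gamma_i^k)^{-1}\gamma\in\Gamma_i$. Choosing $k$ so large that $\sigma^{g_k}\eta$ agrees with $x$ on a set containing $t_i(x)^{-1}\gamma J(i)$ yields the constancy of $x$ there, without ever identifying the constant. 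Your treatment of the ``in particular'' clause is essentially the paper's computation, namely $(t_i(x)^{-1}\gamma D_i)\cap\Aper(x)\subseteq t_i(x)^{-1}\gamma(D_i\setminus\Per(\eta,\Gamma_i))=t_i(x)^{-1}\gamma J(i)$, and is fine once the first assertion is in place.
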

\begin{proof}
    
By definition of $\pi$, $x\in \sigma^{(t_i(x))^{-1}}C_i$.
Since $\overline{O_\sigma(\eta)}$ is minimal, there exists a sequence $(g_k)_{k\in\mathbb{N}}\subseteq G$ such that $\sigma^{g_k}\eta$ converges to $x$. 
Let $i\in\mathbb{N}$. 
Since ${\sigma^{(t_i(x))^{-1}}}C_i$ is a clopen set, we deduce that there exists $k_i’\in \mathbb{N}$ such that $\sigma^{g_j}\eta\in \sigma^{(t_i(x))^{-1}}C_i$ for every $j\geq k_i’$. 
Consequently, for each $j\geq k_i’$, there exists $\gamma_i^j\in\Gamma_i$ satisfying $g_j=(t_i(x))^{-1}\gamma_i^j$.
 Since $\sigma^{(t_i(x))^{-1}\gamma_i^j}\eta\to x$ when $j\to \infty$, for each $\gamma\in \Gamma_i$ we can pick $k_i\in\mathbb{N}$ (which depends on $\gamma$) with $k_i\geq k_i’$ so that $  (t_i(x))^{-1}\gamma J(i)\subseteq D_{k_i}$.
 Moreover, there exists $t_{k_i}\in\mathbb{N}$ so that $\sigma^{g_j}\eta\in \sigma^{(t_i(x))^{-1}}C_i$ and $\sigma^{g_j}\eta(d)=x(d)$ for every $d\in D_{k_i}$, $j\geq t_{k_i}$.
For $d=(t_i(x))^{-1}\gamma d_i\in (t_i(x))^{-1}\gamma J(i)$, we have $\sigma^{g_j}\eta(d)=\eta(g_j^{-1}d)=\eta((\gamma_i^j)^{-1}t_i(x)d)=\eta((\gamma_i^j)^{-1}\gamma d_i)$. 
Since $\eta|_{(\gamma_i^j)^{-1}\gamma J(i)}$ is constant by the previous corollary, we deduce that $x|_{(t_i(x))^{-1}\gamma J(i)}$ is constant.

Furthermore, for every $\gamma\in \Gamma_i$
\begin{align*}
    (t_i(x))^{-1}\gamma D_i\cap \Aper(x)
    &\subseteq 
    t_i(x)^{-1}\gamma D_i\cap \left(G\setminus\Per(x,t_i(x)^{-1}\Gamma_it_i(x))\right)\\
    &=  t_i(x)^{-1}\gamma D_i\cap (G\setminus \Per(\sigma^{(t_i(x))^{-1}}\eta,t_i(x)^{-1}\Gamma_it_i(x)))\\
    &=  t_i(x)^{-1}\gamma D_i\cap (G\setminus \Per(\sigma^{t_i(x)^{-1}\gamma}\eta,t_i(x)^{-1}\Gamma_it_i(x)))\\
    &=(t_i(x)^{-1}\gamma)(D_i\setminus \Per(\eta,\Gamma_i))\\
    &=t_i(x)^{-1}\gamma J(i).
    \end{align*}

This concludes the proof.
\end{proof}

\subsection{Abelian finitely generated groups}\label{Abelianfinite}
Let $G$ be a torsion-free finitely generated abelian group  of rank $r>1$, i.e., $G=\mathbb{Z}^r$. 
Let $S=\{\pm \tilde{e_j}:1\leq j\leq r\}$ be the canonical symmetric set of generators of $G$.
That is, $\tilde{e_j}$ is the element
$\tilde{e_j}=(0,\dots,0,1,0,\dots,0)$, where the element $1$ is in the $j$-th position.
 
For $i\in\mathbb{N}$, let $\Gamma_i$ be the subgroup given by
\begin{align}\label{Gammas}
    \Gamma_i=\langle  p^i_j\tilde{e_j}:1\leq j\leq r\rangle_G,
\end{align} 
where $(p^i_j)_{i\in\mathbb{N}}$ is a strictly increasing sequence of natural numbers such that $p_j^i\to \infty$ when $i\to \infty$ and $p_j^i|p_{j}^{i+1}$ for every $1\leq j\leq r$.
Up to taking subsequences, if necessary, we can assume that $p_j^i>2i+1$ and that $(\Gamma_i)_{i\in\mathbb{N}}$ satisfies (\ref{Gammacondition}).
Now, for each $i\in\mathbb{N}$ consider $D_i'\subseteq G$ as some set of the form
\begin{align}\label{form D}
    D_i'=\{(x_1,\dots, x_r)\in \mathbb{Z}^r: -q_{1,j}^i\leq x_j<q_{2,j}^i, 1\leq j\leq r\},  
\end{align}
for some $q_{t,j}^i> i$, $t\in\{1,2\}$ such that $q_{1,j}^i+q_{2,j}^i=p_j^i$, $1\leq j\leq r$ and $D_i'\subseteq D_{i+1}'$.

Observe that $(D_i')_{i\in\mathbb{N}}$ is a sequence of finite subsets of $G$ such that $D_i'$ is a fundamental domain of $G/\Gamma_i$ and  $G=\bigcup_{i\in\mathbb{N}}D_i'$.
The proof of Lemma 3 and Lemma 4 in \cite{CoPe14} guarantees the existence of a sequence of natural numbers $(n_i)_{i\in\mathbb{N}}\subseteq \mathbb{N}$ and a sequence of finite subsets of $G$, $(D_i)_{i\in\mathbb{N}}$, such that  
\begin{itemize}
\item $D_1=D_1'$.
    \item $D_i=\bigcup_{\gamma\in D_{n_i}'\cap \Gamma_{n_{i-1}}} \gamma+ D_{i-1}=\bigcup_{\gamma\in D_i\cap \Gamma_{n_{i-1}}} \gamma +D_{i-1}$, for $i\geq 2$.

\item $D_i'\subseteq D_i$. 
In particular, $G=\bigcup_{i\in\mathbb{N}}D_i$.

 \item $D_i$ is a fundamental domain of $G/\Gamma_{n_i}$.

\item $\overline{0}=(0,\dots,0)\in D_i\subseteq D_{i+1}$ for each $i\in\mathbb{N}$.

\item $(D_i)_{i\in\mathbb{N}}$ is a F\o lner sequence for $G$.

\end{itemize}
Notice $D_i$ has the form given in (\ref{form D}), $i\in\mathbb{N}$.
Moreover, if necessary, we can take a subsequence of $(\Gamma_i)_{i\in\mathbb{N}}$ such that $n_i=i$ for each $i\in\mathbb{N}$.

For each $m\in \mathbb{N}$ and $d\in\mathbb{Z}^r$, denote by $B(d,m)$ the hypercube  in $\mathbb{Z}^r$ which is centered at $d=(d_1,\dots,d_r)$ and whose sides measure $2m+1$, i.e.,
\begin{align}\label{rectangle}
    B(d,m)=\{(z_1,\dots, z_r)\in\mathbb{Z}^r: -m\leq z_i-d_i\leq m, 1\leq i\leq r\}.
\end{align}
Define $b(m):=|B(\overline{0},m)|$.
Then, for every $m\in\mathbb{N}$,
\begin{align*}
    \dfrac{b(m+1)}{b(m)}=\left(1+\dfrac{2}{2m+1}\right)^r=1+\dfrac{1}{2m+1}y'.
\end{align*}
Note that $\frac{1}{2m+1}y'$ goes to $0$ as $m$ goes to infinity.
Therefore, for every $\varepsilon>0$ and $s\in\mathbb{N}$ there exists $m_0\in\mathbb{N}$ such that for each $m\geq m_0$
\begin{align}\label{Boxes}
    \dfrac{b(m+s)}{b(m)}<1+{\varepsilon}.
\end{align}

\begin{lemma}\label{1/2r}
    Let $n,s\in\mathbb{N}$,  and $d\in D_n$.
    If $\gamma\in \Gamma_{n+s}$ is such that $d\in\gamma+ D_{n+s}$, then 
    \begin{align*}
        \frac{1}{2^r}b(s)\leq |B(d,s)\cap \gamma +D_{n+s}|.
    \end{align*}
\end{lemma}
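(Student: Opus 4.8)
The statement is a purely combinatorial fact about the boxes $D_i$ of the form \eqref{form D} in $\mathbb{Z}^r$. Recall $D_{n+s}$ is a fundamental domain of $G/\Gamma_{n+s}$, so for $d\in D_n$ there is a unique $\gamma\in\Gamma_{n+s}$ with $d\in\gamma+D_{n+s}$; equivalently, $d-\gamma\in D_{n+s}$. The hypercube $B(d,s)$ has side $2s+1$ and volume $b(s)$. I want to show that at least a $\tfrac{1}{2^r}$ fraction of the lattice points of $B(d,s)$ already lie in the translate $\gamma+D_{n+s}$ of the big box. The plan is to reduce to one coordinate at a time, since all the sets involved are products over the $r$ coordinate directions.

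**Reduction to dimension one.** Write $D_{n+s}=\prod_{j=1}^r [-q_{1,j}^{n+s},\,q_{2,j}^{n+s})$ and $B(d,s)=\prod_{j=1}^r [d_j-s,\,d_j+s]$, and $\gamma=(\gamma_1,\dots,\gamma_r)$ with each $\gamma_j$ a multiple of $p_j^{n+s}=q_{1,j}^{n+s}+q_{2,j}^{n+s}$. Then
\[
B(d,s)\cap(\gamma+D_{n+s})=\prod_{j=1}^r\Big([d_j-s,\,d_j+s]\cap[\gamma_j-q_{1,j}^{n+s},\,\gamma_j+q_{2,j}^{n+s})\Big),
\]
so $|B(d,s)\cap(\gamma+D_{n+s})|=\prod_{j=1}^r \ell_j$ where $\ell_j$ is the number of integers in the one-dimensional intersection in coordinate $j$. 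Thus it suffices to prove that for each $j$, $\ell_j\ge \tfrac12(2s+1)$, because then the product is at least $\prod_j \tfrac12(2s+1)=\tfrac{1}{2^r}b(s)$. (Here I use $d_j\in D_n\subseteq D_{n+s}$ only through the fact that after the unique translate, $d_j-\gamma_j\in[-q_{1,j}^{n+s},\,q_{2,j}^{n+s})$; the key extra input is the size estimate $q_{t,j}^{n+s}>n+s>s$, which holds by the construction of the $D_i'$.)

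**The one-dimensional estimate.** Fix a coordinate and drop the index: I have an integer interval $[a-s,a+s]$ of $2s+1$ points, and an integer interval $[c,c+p)$ with $p=q_1+q_2$, $q_1,q_2>s$, and $a\in[c,c+p)$ (this is $d_j-\gamma_j$ lying in the chosen fundamental domain of $\mathbb{Z}/p\mathbb{Z}$). I must count $|[a-s,a+s]\cap[c,c+p)|$ and show it is $\ge s+1 = \tfrac12(2s+1)$ rounded up. The left end of $B$ can fall below $c$ only by at most $s$ (since $a\ge c$), and the right end can exceed $c+p-1$ only by at most $s$ (since $a\le c+p-1$); but crucially both endpoints cannot be violated simultaneously, because that would force $p\le 2s$, contradicting $q_1,q_2>s$, i.e. $p=q_1+q_2>2s$. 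Hence at most one side of the small hypercube is truncated, losing at most $s$ points, so $\ell\ge (2s+1)-s=s+1\ge\tfrac12(2s+1)$. That finishes the one-dimensional case and hence, by the product formula above, the lemma.

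**Main obstacle.** The only subtlety is bookkeeping: making sure the chosen $\gamma$ is exactly the one whose box contains $d$ (uniqueness of the fundamental-domain representative) and that the product form of $D_{n+s}$ in \eqref{form D} is compatible with the chosen $\gamma$ being a $\Gamma_{n+s}$-element, i.e. each $\gamma_j$ is a multiple of $p_j^{n+s}$ — this is immediate from the definition \eqref{Gammas}. The inequality $p_j^{n+s}>2s$ must be traced back to the assumption $p_j^i>2i+1$ together with $q_{t,j}^i>i$, which were arranged when the sequences were chosen; I expect this to be the place where one has to be slightly careful about which of the "up to subsequence" normalizations is being invoked, but no real difficulty arises.
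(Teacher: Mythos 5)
Your proof is correct and follows essentially the same route as the paper's: the paper packages your per-coordinate count by choosing, in each coordinate, the untruncated side and observing that the resulting orthant cube $d+O$ of $(s+1)^r$ points is contained in $\gamma+D_{n+s}$, which is exactly your bound $\ell_j\geq s+1$ in product form. Both arguments hinge on the same key input, namely that $q_{1,j}^{n+s},q_{2,j}^{n+s}>s$ forces at most one side of $[d_j-s,d_j+s]$ to leave the fundamental domain.
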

\begin{proof}
    Let $\gamma\in \Gamma_{n+s}$ be such that $d\in \gamma+ D_{n+s}$.
Since $D_{n+s}$ has the form described in \eqref{form D}, it holds that at least one of the sets $d+O$ is contained in $ \gamma +D_{n+s}$, where 
    \begin{align*}
        O=\{(x_1,\dots, x_r)\in\mathbb{Z}^r: 0\leq \epsilon_jx_j\leq s, 1\leq j\leq r\},
    \end{align*}
    for some $\epsilon_j\in\{-1,1\}$, $1\leq j\leq r$. 
    The set $O$  contains a $\frac{1}{2^r}$ part of the hypercube $[-s,s]^r$ in $\mathbb{Z}^r$. 
    Thus, $|O\cap B(\overline{0},s)|\geq \frac{|B(\overline{0},s)|}{2^r}$, as we desired.
\end{proof}
 
For $n\geq 2$, let  $\Sigma=\{1,2,\dots, n\}$,   $\eta\in \Sigma^G$ the Toeplitz array presented in \ref{Sec-3} associated to the sequence $(\Gamma_i)_{i\in\mathbb{N}}$ given by \eqref{Gammas} and $(D_i)_{i\in\mathbb{N}}$ the sequence of fundamental domains for $G/\Gamma_i$ previously presented.  
The subshift $X=\overline{O_\sigma(\eta)}$ denotes the Toeplitz subshift associated to $\eta$.

\medskip

Recall that for $x\in X$ we denote $\pi(x)=(t_i(x)+\Gamma_i)_{i\in\mathbb{N}}$ as the image under the map $\pi:X\to \overleftarrow{G}$ from $X$ to its maximal equicontinuous factor $\overleftarrow{G}$, where $t_i(x)\in D_i$ for each $i\in\mathbb{N}$.

Let $i\in\mathbb{N}$.
For each $\zeta\in\Gamma_i$,  denote by $T_\zeta(x)$ the union of sets of the form $-t_j(x)+\gamma_{j}^{\zeta}+D_{j}$, where for each $j\geq 0$, $\gamma_{i+j}^{\zeta}$ is the only element in $\Gamma_{i+j}$ such that $-t_{i+j}(x)+\gamma_{i+j}^\zeta+D_{i+j}\supseteq -t_{i+(j-1)}(x)+\gamma_{i+(j-1)}^\zeta+ D_{i+(j-1)}$ and $\gamma_{i}^\zeta=\zeta$, i.e., 
\begin{align*}
    T_\zeta(x)=\bigcup_{j\in\mathbb{N}}-t_{i+j}(x)+\gamma_{i+j}^\zeta+ D_{i+j}.
\end{align*}

\begin{remark}
If $\pi(x)=\pi(y)$, then $-t_{i}(x)+\zeta +D_i=-t_i(y)+\zeta+ D_i$ for every $i\in\mathbb{N}$ and $\zeta\in \Gamma_i$. 
Hence, $T_\zeta(x)=T_\zeta(y)$.
\end{remark}

\begin{lemma}\label{decomG}
   For each $x\in X$, there exist $\beta\leq 2^r$ and $\zeta_{n_i}\in\Gamma_{n_i}$ for $1\leq i\leq \beta$, such that 
    \begin{align*}
        G=\bigsqcup_{i=1}^{\beta} T_{\zeta_{n_i}}(x).
    \end{align*}
\end{lemma}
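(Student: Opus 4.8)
The plan is to recast the sets $T_\zeta(x)$ as \emph{columns} of a refining tower of box–partitions of $G$ and then simply count how many columns are needed. First I would fix $x$, write $\pi(x)=(t_i(x)+\Gamma_i)_{i\in\mathbb N}$ with $t_i(x)\in D_i$, and for each $i$ consider the family $\mathcal T_i=\{-t_i(x)+\zeta+D_i:\zeta\in\Gamma_i\}$; since $D_i$ is a fundamental domain of $G/\Gamma_i$, this is a partition of $G$. Using $t_{i+1}(x)-t_i(x)\in\Gamma_i$, the decomposition $D_{i+1}=\bigsqcup_{\delta\in D_{i+1}\cap\Gamma_i}(\delta+D_i)$ and $\Gamma_{i+1}\le\Gamma_i$, I would check that each member of $\mathcal T_{i+1}$ is a disjoint union of members of $\mathcal T_i$, so the tiles $\tau_i(g)\in\mathcal T_i$ containing a given $g\in G$ increase with $i$. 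Inspecting the definition of the $\gamma^\zeta_{i+j}$, one sees that $T_\zeta(x)$ is exactly the increasing union $\bigcup_i\tau_i(g)=:T(g)$ whenever $-t_i(x)+\zeta+D_i=\tau_i(g)$ at the level where $\zeta$ lives; since $\bigcup_{i\ge i_0}\tau_i(g)=\bigcup_i\tau_i(g)$ for every $i_0$, every such column equals some $T_\zeta(x)$ at every level, and $T(g)=T(g')$ whenever $g'\in T(g)$. Hence the columns partition $G$ into sets of the form $T_\zeta(x)$, and the problem reduces to bounding their number by $2^r$.

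Second, I would analyze the column through the identity. Since $t_i(x)\in D_i$ we have $\overline 0\in -t_i(x)+D_i$, so $\tau_i(\overline 0)=-t_i(x)+D_i$ and $T(\overline 0)=\bigcup_i(-t_i(x)+D_i)$. Writing $D_i=\prod_{j=1}^r[-q^i_{1,j},q^i_{2,j})$ and $t_i(x)=(t_{i,1},\dots,t_{i,r})$, this is an increasing union of the boxes $\prod_j[a_{i,j},b_{i,j})$ with $a_{i,j}=-q^i_{1,j}-t_{i,j}\le 0<b_{i,j}=q^i_{2,j}-t_{i,j}$; as the union is increasing and the sets are products, $T(\overline 0)=\prod_{j=1}^r I_j$ with $I_j=\bigcup_i[a_{i,j},b_{i,j})$. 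Here $a_{i,j}$ is non-increasing, $b_{i,j}$ non-decreasing and $b_{i,j}-a_{i,j}=q^i_{1,j}+q^i_{2,j}\to\infty$ (using $q^i_{t,j}>i$), so each $I_j$ is either $\mathbb Z$, a ray $[c,\infty)$, or a ray $(-\infty,c)$, and the two ray cases exclude one another. Set $k=\#\{j:I_j\neq\mathbb Z\}$, so $k\le r$.

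Third, I would show there are exactly $2^k$ columns, indexed by the \emph{side} chosen in each of the $k$ bounded coordinates. For fixed $g=(z_1,\dots,z_r)$ the coordinate-$j$ factor of $\tau_i(g)$ is $[a_{i,j},b_{i,j})$ translated by a multiple $c_{i,j}$ of $p^i_j=q^i_{1,j}+q^i_{2,j}$, and an elementary stabilization argument (the boxes grow in every coordinate while $z_j$ stays fixed) shows $c_{i,j}$ is eventually constant: equal to $0$ when $z_j\in I_j$ — in particular always when $I_j=\mathbb Z$ — and equal to $-1$ or $+1$ otherwise, according to which type of ray $I_j$ is. Thus $T(g)$ depends only on the vector $\delta(g)\in\{0,1\}^k$ recording, for each bounded coordinate, whether $z_j\in I_j$, which gives at most $2^k$ columns; conversely every $\delta\in\{0,1\}^k$ is realized by $g_\delta$ with $(g_\delta)_j\in I_j$ when $\delta_j=0$, $(g_\delta)_j$ the integer immediately outside $I_j$ when $\delta_j=1$, and $(g_\delta)_j=0$ for $j>k$, and the resulting columns are pairwise distinct (they differ at a single bounded coordinate for all large $i$), hence pairwise disjoint. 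Choosing a level $N$ past which all these shifts have stabilized and writing $\tau_N(g_\delta)=-t_N(x)+\zeta_\delta+D_N$ with $\zeta_\delta\in\Gamma_N$, we obtain $G=\bigsqcup_{\delta\in\{0,1\}^k}T_{\zeta_\delta}(x)$ with $\beta=2^k\le 2^r$, which is the assertion.

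The main obstacle is the bookkeeping in the third step: verifying that the shifts $c_{i,j}$ stabilize and take only the values $0,\pm1$ in the bounded coordinates, and that $g$ and the representative $g_\delta$ with $\delta(g_\delta)=\delta(g)$ actually lie in the same tile of $\mathcal T_i$ for all large $i$. This is the only place where the product structure of $G=\mathbb Z^r$ and the growth of the side lengths of $D_i$ (the hypothesis $q^i_{t,j}>i$, and behind it the estimate of Lemma \ref{1/2r}) are really used; everything else — that the $\mathcal T_i$ form a refining tower, that $T_\zeta(x)$ is the column of any of its points, and that columns partition $G$ — is routine once that tower picture is in place.
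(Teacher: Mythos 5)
Your proposal is correct, but it proves the lemma by a genuinely different route than the paper. Both arguments start the same way: the sets $-t_i(x)+\zeta+D_i$ form a refining tower of partitions of $G$, each $T_\zeta(x)$ is the increasing union of tiles through any of its points, and these columns partition $G$. The paper then bounds the number of columns by contradiction via a volume count: if more than $2^r$ columns met $B(\overline{0},n_0)$, each would occupy at least $2^{-r}b(s)$ of $B(\overline{0},n_0+s)$ by Lemma \ref{1/2r}, contradicting the growth estimate \eqref{Boxes}. You instead give an exact structural description: using the box form \eqref{form D} of the $D_i$, the column through the origin is a product $\prod_j I_j$ of nested unions of intervals, each $I_j$ being $\mathbb{Z}$ or a half-line, and the remaining columns are precisely the $2^k$ nonempty products $\prod_j J_j$ with $J_j\in\{I_j,\mathbb{Z}\setminus I_j\}$, where $k$ is the number of proper half-lines. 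Your shift-stabilization bookkeeping in the third step checks out (the shifts $c_{i,j}$ are eventually $0$ on coordinates with $z_j\in I_j$ and eventually $\pm 1$ otherwise, since $p_j^i\to\infty$), so you get the sharper conclusion that $\beta=2^k$ exactly, with all $\zeta$'s at a single level, and you avoid Lemma \ref{1/2r} and \eqref{Boxes} entirely --- indeed, contrary to your closing remark, your argument never uses Lemma \ref{1/2r}, only that the side lengths of $D_i$ tend to infinity. The trade-off is that your argument leans on the product structure of the fundamental domains, whereas the paper's counting argument is the one it later recycles almost verbatim in the virtually $\mathbb{Z}^r$ case (Lemma \ref{decomGG}), where the domains $D_iR$ are no longer boxes and the product-of-rays picture would need reworking.
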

\begin{proof}
Since  $G=-t_n(x)+\Gamma_n +D_n$ for each $n\in\mathbb{N}$, there exist finite elements $\gamma_1,\gamma_2,\dots,\gamma_s\in\Gamma_n$ such that $B(\overline{0},n)\subseteq \bigcup_{i=1}^{s}(-t_n(x)+\gamma_i +D_n)$ satisfies that $(-t_n(x)+\gamma_i +D_n)\cap B(\overline{0},n)\neq\emptyset$.
We also remark that are the only elements that satisfy the previous condition.
Therefore, we conclude  
\begin{align*}
    G=\bigsqcup_{i=1}^{\beta}T_{\zeta_{n_i}}(x),
\end{align*}
for some $\beta\in\mathbb{N}\cup\{\infty\}$ and $\zeta_{n_i}\in\Gamma_{n_i}$, $1\leq i\leq \beta$.

Now, we prove that $\beta \leq 2^r$.
By contradiction, assume that $\beta>2^r$.
Thus, for some $n_0\in\mathbb{N}$ there exist $\beta'\in\mathbb{N}$, with $\beta \geq \beta'>2^r$ and $\zeta_1,\zeta_2,\dots,\zeta_{\beta'}\in\Gamma_{n_0}$ such that the sets $T_{\zeta_i}(x), i\in\{1,2,\dots, \beta'\}$ are  pairwise disjoint, $B(\overline{0},n_0)\subseteq \bigsqcup_{i=1}^{\beta'}-t_{n_0}(x)+\gamma_i+ D_{n_0}$ and $T_{\zeta_i}(x)\cap B(\overline{0},n_0)\neq \emptyset$ for each $1\leq i\leq \beta'$.
Therefore,  we have that $-t_{n_0}(x)+\zeta_i+d_i\in B(\overline{0},n_0)$ for some $d_i\in D_{n_0}$, $1\leq i\leq \beta'$.
Consequently, $B(-t_{n_0}(x)+\zeta_i +d_i,s)\subseteq B(\overline{0},n_0+s)$ for every $s\in\mathbb{N}$.
Hence, we have 
\begin{align*}
\bigsqcup_{i=1}^{\beta'}    B(-t_{n_0}(x)+\zeta_i +d_i,s)\cap (-t_{n_0+s}(x)+\gamma_{n_0+s}^{\zeta_i}+ D_{n_0+s})\subseteq B(\overline{0},n_0+s),
\end{align*}
for all $s\in\mathbb{N}$, which implies  

\begin{align} \label{alpha-prime}
\sum_{i=1}^{\beta'}   |B(-t_{n_0}(x)+\zeta_i+ d_i,s)\cap (-t_{n_0+s}(x)+\gamma_{n_0+s}^{\zeta_i} +D_{n_0+s})|\leq b(n_0+s).
\end{align}

Using  (\ref{Boxes}) with $\varepsilon<\frac{\beta'}{2^r}-1$ and $s=n_0$, we obtain that there exists $s\geq 1$ such that 
\begin{align*}
    \dfrac{b(n_0+s)}{b(s)}<1+\varepsilon.
\end{align*}
Combining this with (\ref{alpha-prime}) and Lemma \ref{1/2r}  we obtain
\begin{align*}
    \dfrac{\beta'}{2^r}b(s)<(1+\varepsilon)b(s),
\end{align*}
which is a contradiction with the choice of $\varepsilon$. 
Thus, $\beta\leq 2^r$ and we conclude the lemma.
\end{proof}

\begin{lemma}\label{Const-T}
Let $x\in X$ be a non-Toeplitz element and $i\in\mathbb{N}$. 
For every $\zeta\in\Gamma_i$ there exists $\alpha\in\Sigma$ such that $x(d)=\alpha$ for every  $d\in T_\zeta(x)\cap\Aper(x)$.
\end{lemma}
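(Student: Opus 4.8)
The plan is to combine the description of $T_\zeta(x)$ as a nested union of translated fundamental domains with the constancy statement from Lemma \ref{aper-const}, and then pass to the limit along the nested sequence. Fix $x\in X$ non-Toeplitz, fix $i\in\mathbb N$ and $\zeta\in\Gamma_i$. Recall that by definition
\begin{align*}
T_\zeta(x)=\bigcup_{j\in\mathbb N}\bigl(-t_{i+j}(x)+\gamma_{i+j}^\zeta+D_{i+j}\bigr),
\end{align*}
where the sets $-t_{i+j}(x)+\gamma_{i+j}^\zeta+D_{i+j}$ are increasing in $j$ and $\gamma_i^\zeta=\zeta$. Each $\gamma_{i+j}^\zeta$ lies in $\Gamma_{i+j}$, so Lemma \ref{aper-const}, applied with the index $i+j$ and the element $\gamma_{i+j}^\zeta\in\Gamma_{i+j}$, gives $\alpha_j\in\Sigma$ with $x(d)=\alpha_j$ for every $d\in \bigl(-t_{i+j}(x)+\gamma_{i+j}^\zeta+D_{i+j}\bigr)\cap\Aper(x)$.

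First I would check that the values $\alpha_j$ are all equal provided the corresponding intersections with $\Aper(x)$ are nonempty: since the sets $-t_{i+j}(x)+\gamma_{i+j}^\zeta+D_{i+j}$ are nested, once $\bigl(-t_{i+j}(x)+\gamma_{i+j}^\zeta+D_{i+j}\bigr)\cap\Aper(x)\neq\emptyset$ the common value on $\bigl(-t_{i+j+1}(x)+\gamma_{i+j+1}^\zeta+D_{i+j+1}\bigr)\cap\Aper(x)$ must agree with $\alpha_j$ on the overlap, hence $\alpha_{j+1}=\alpha_j$. So there is a well-defined $\alpha\in\Sigma$ such that $x(d)=\alpha$ for every $d$ in $T_\zeta(x)\cap\Aper(x)$, once we know this intersection is nonempty — and if it is empty the statement is vacuously true for any $\alpha$. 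The key input here is that $\Aper(x)\neq\emptyset$ because $x$ is non-Toeplitz, together with the fact (from Lemma \ref{decomG}) that $G$ is the disjoint union of finitely many sets $T_{\zeta_{n_k}}(x)$, so $\Aper(x)$ meets at least one of them; but to get the conclusion for \emph{every} $\zeta$ I would instead argue directly that each $d\in T_\zeta(x)\cap\Aper(x)$ already lies in some $-t_{i+j}(x)+\gamma_{i+j}^\zeta+D_{i+j}$, so its value is determined by the relevant $\alpha_j$, and the nestedness forces all those values to coincide.

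The cleanest way to write this: let $d,d'\in T_\zeta(x)\cap\Aper(x)$. Choose $j$ large enough that both $d$ and $d'$ lie in $-t_{i+j}(x)+\gamma_{i+j}^\zeta+D_{i+j}$, which is possible since the sets increase to $T_\zeta(x)$. Then Lemma \ref{aper-const} applied with index $i+j$ yields a single $\alpha\in\Sigma$ with $x(d)=x(d')=\alpha$. Hence $x$ is constant on $T_\zeta(x)\cap\Aper(x)$, which is exactly the claim.

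I expect the only subtlety — and it is minor — to be the bookkeeping that $\gamma_{i+j}^\zeta\in\Gamma_{i+j}$ so that Lemma \ref{aper-const} genuinely applies at each level; this is immediate from the definition of $T_\zeta(x)$, where $\gamma_{i+j}^\zeta$ was introduced precisely as the element of $\Gamma_{i+j}$ making the domains nest. No Følner or counting input is needed here (those were used for Lemma \ref{decomG}); the whole argument is the local-constancy of Lemma \ref{aper-const} plus a monotone exhaustion. The main obstacle, to the extent there is one, is simply making sure the index shift $i\mapsto i+j$ and the element $\zeta\mapsto\gamma_{i+j}^\zeta$ are fed into Lemma \ref{aper-const} in the form it is stated.
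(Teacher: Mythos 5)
Your argument is correct and is essentially identical to the paper's proof: given $d,d'\in T_\zeta(x)\cap\Aper(x)$, the nestedness of the sets $-t_{i+j}(x)+\gamma_{i+j}^\zeta+D_{i+j}$ provides a common level $j_0$ containing both points, and Lemma \ref{aper-const} applied at index $i+j_0$ with $\gamma_{i+j_0}^\zeta\in\Gamma_{i+j_0}$ forces $x(d)=x(d')$. The preliminary discussion of consistency of the $\alpha_j$'s across levels is unnecessary but harmless, since your final "cleanest" version is exactly the published argument.
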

\begin{proof}
   Let $d,d'\in T_\zeta(x)\cap\Aper(x)$. 
    By the construction of $T_\zeta(x)$ there exists $j_0\geq 0$ such that $d,d'\in -t_{i+j_0}(x)+\gamma_{i+j_0}^\zeta +D_{i+j_0}$.
    Using Lemma \ref{aper-const}, we obtain that $x(d)=x(d')$.
\end{proof}

\begin{proposition}\label{Regio-prox-bound}
It holds that $|\pi^{-1}(\{\pi(x)\})|\leq |\Sigma|^{2^r}$ for every $x\in X$.
\end{proposition}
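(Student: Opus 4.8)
The plan is to encode each element of the fibre $\pi^{-1}(\{\pi(x)\})$ by a bounded word over $\Sigma$, using the partition of $G$ from Lemma~\ref{decomG} together with Remark~\ref{rem-Aper}. The underlying point is that any two elements of the fibre are forced to agree on $G\setminus\Aper(x)$, while on $\Aper(x)$ each of them is constant on each of the at most $2^r$ blocks of that partition; hence a fibre element is completely specified by at most $2^r$ symbols of $\Sigma$, which gives the bound $|\Sigma|^{2^r}$.

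Concretely, first I would apply Lemma~\ref{decomG} to obtain $\beta\le 2^r$ and elements $\zeta_{n_1},\dots,\zeta_{n_\beta}$ with $G=\bigsqcup_{i=1}^{\beta}T_{\zeta_{n_i}}(x)$. By the remark preceding Lemma~\ref{decomG} this partition depends only on $\pi(x)$, and by Remark~\ref{rem-Aper} the set $\Aper(x)$ and the restriction to $G\setminus\Aper(x)$ are the same for every $y\in\pi^{-1}(\{\pi(x)\})$, with $y|_{G\setminus\Aper(x)}=x|_{G\setminus\Aper(x)}$. Next, for each such $y$ and each $i$, the argument of Lemma~\ref{Const-T} — which only uses the nested construction of the sets $T_\zeta(y)$ and Lemma~\ref{aper-const}, and hence is valid for any element of $X$ — provides a symbol $a_i(y)\in\Sigma$ with $y(d)=a_i(y)$ for every $d\in T_{\zeta_{n_i}}(x)\cap\Aper(x)$; when this intersection is empty, set $a_i(y):=1$ (so that $a_i(y)$ is in all cases well defined, being the common value of $y$ on a fixed subset of $G$). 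I would then define $\Phi:\pi^{-1}(\{\pi(x)\})\to\Sigma^{\beta}$ by $\Phi(y)=(a_1(y),\dots,a_\beta(y))$.

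Finally I would check that $\Phi$ is injective: if $\Phi(y)=\Phi(y')$, then $y=y'$ on $G\setminus\Aper(x)$ (both equal $x$ there), and on each $T_{\zeta_{n_i}}(x)\cap\Aper(x)$ (both are constant with common value $a_i(y)=a_i(y')$); since $G=\bigsqcup_{i=1}^{\beta}T_{\zeta_{n_i}}(x)$, this forces $y=y'$ everywhere, and one also notes that if $\Aper(x)=\emptyset$ then the fibre is already a singleton. Consequently $|\pi^{-1}(\{\pi(x)\})|\le|\Sigma^{\beta}|=|\Sigma|^{\beta}\le|\Sigma|^{2^r}$. The step requiring the most care is the bookkeeping in the second paragraph: verifying that the partition $\{T_{\zeta_{n_i}}(x)\}$, the set $\Aper(x)$, and the values of the fibre elements off $\Aper(x)$ are genuinely invariant along the fibre, so that the same $\beta$ ``slots'' describe every $y$. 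All of this is already packaged in Remark~\ref{rem-Aper}, the remark before Lemma~\ref{decomG}, Lemma~\ref{decomG}, and Lemma~\ref{Const-T}, so beyond this the proof is a short assembly requiring no new estimates.
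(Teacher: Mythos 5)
Your proposal is correct and follows essentially the same route as the paper's own proof: decompose $G$ into at most $2^r$ sets $T_{\zeta}(x)$ via Lemma~\ref{decomG}, use Remark~\ref{rem-Aper} to fix the fibre elements off $\Aper(x)$, and use Lemma~\ref{Const-T} to see each fibre element is constant on each block inside $\Aper(x)$. Your version merely makes the encoding map and its injectivity explicit, which the paper leaves implicit.
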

\begin{proof}Let $x\in X$ and $\overline{g}=\pi(x)$. 
If $x$ is a Toeplitz element, we already know that $|\pi^{-1}(\overline{g})|=|\{x\}|=1$.
Assume $x\in X$ is a non-Toeplitz element. 
Lemma \ref{decomG} guarantees a decomposition of $G$ using at most $2^r$ sets of the form $T_\zeta$ for some $\zeta\in\Gamma_i$, $i\in\mathbb{N}$.
Remark \ref{rem-Aper} implies that for $y,z\in\pi^{-1}(\{\overline{g}\})$, it holds that $y(g)=z(g)$ for each  $g\in G\setminus\Aper(x)$.
On the other hand, Lemma \ref{Const-T} implies that for each $y\in\pi^{-1}(\{\overline{g}\})$, $y|_{T_\zeta(x)\cap \Aper(x)}$ is constant.
Thus,  $\pi^{-1}(\{\overline{g}\})$ has at most $|\Sigma|^{2^r}$ elements.
\end{proof}

The previous proposition and Proposition \ref{It-to-Reg} imply the following corollary.
 \begin{corollary}\label{IT-empty}
    If $s>|\Sigma|^{2^r}$, then
    $\IN_s(X)\setminus \triangle^{(s)}(X)=\emptyset$.
 \end{corollary}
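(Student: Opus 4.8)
The plan is to deduce Corollary~\ref{IT-empty} directly from Proposition~\ref{Regio-prox-bound} together with Proposition~\ref{It-to-Reg}, using the elementary fact that a fibre bound of size $k$ over the maximal equicontinuous factor forces all regionally proximal tuples of size $>k$ to have a repeated coordinate. First I would fix $s>|\Sigma|^{2^r}$ and an arbitrary tuple $x=(x_1,\dots,x_s)\in\IN_s(X)$, and show that $x$ must lie in $\triangle^{(s)}(X)$. By Proposition~\ref{It-to-Reg}, $x$ is an $s$-regionally proximal tuple, so it suffices to prove that $Q_s(X,G)\subseteq\triangle^{(s)}(X)$ whenever $s>|\Sigma|^{2^r}$.

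The key step is the following observation about the maximal equicontinuous factor map $\pi:X\to\overleftarrow{G}$: since $(\overleftarrow{G},\phi,G)$ is equicontinuous, if $(x_1,\dots,x_s)\in Q_s(X,G)$ then $(\pi(x_1),\dots,\pi(x_s))\in Q_s(\overleftarrow{G},G)$, and in an equicontinuous system the only regionally proximal tuples are the diagonal ones, so $\pi(x_1)=\dots=\pi(x_s)$. (This is the content of the remark ``see \cite[Chapter 9]{Au89}'' already invoked in the proof of Proposition~\ref{theo-main-2}.) Hence all the $x_i$ lie in the single fibre $\pi^{-1}(\{\pi(x_1)\})$, which by Proposition~\ref{Regio-prox-bound} has at most $|\Sigma|^{2^r}<s$ elements. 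By the pigeonhole principle two of the coordinates $x_i,x_j$ with $i\neq j$ must coincide, i.e.\ $x\in\triangle^{(s)}(X)$. This shows $\IN_s(X)\setminus\triangle^{(s)}(X)=\emptyset$, which is exactly the claim.

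The main obstacle, such as it is, is making sure the two auxiliary inputs are legitimately available: that $\pi$ is genuinely the maximal equicontinuous factor map (established in Section~\ref{Sec-3} right after the construction of $\eta$), and that the equicontinuous statement ``$Q_s$ is trivial for equicontinuous systems'' is cited cleanly — this is standard and already used in the $\mathbb{Z}$ case via \cite[Chapter 9]{Au89}, so one can simply reuse that reference. Everything else is the pigeonhole argument, so the proof is genuinely short. I would write it as: ``\emph{Proof.} Let $s>|\Sigma|^{2^r}$ and $x=(x_1,\dots,x_s)\in\IN_s(X)$. By Proposition~\ref{It-to-Reg}, $x\in Q_s(X,G)$. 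Since $\overleftarrow{G}$ is the maximal equicontinuous factor of $X$ and $\pi$ maps $Q_s(X,G)$ into $Q_s(\overleftarrow{G},G)=\triangle^{(s)}(\overleftarrow{G})$ (see \cite[Chapter 9]{Au89}), we may actually conclude $\pi(x_1)=\cdots=\pi(x_s)$, since any two distinct coordinates of a regionally proximal tuple in an equicontinuous system that agree in fact agree everywhere — more precisely, a regionally proximal pair in an equicontinuous system is a diagonal pair. Thus $x_1,\dots,x_s\in\pi^{-1}(\{\pi(x_1)\})$, a set of cardinality at most $|\Sigma|^{2^r}<s$ by Proposition~\ref{Regio-prox-bound}, so $x_i=x_j$ for some $i\neq j$ and $x\in\triangle^{(s)}(X)$. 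Hence $\IN_s(X)\setminus\triangle^{(s)}(X)=\emptyset$. \qed''
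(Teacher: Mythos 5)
Your proof is correct and follows exactly the route the paper intends: the paper derives this corollary in one line from Proposition \ref{Regio-prox-bound} and Proposition \ref{It-to-Reg}, and the argument you spell out (IN-tuple $\Rightarrow$ regionally proximal $\Rightarrow$ projects to a diagonal tuple in the equicontinuous factor $\Rightarrow$ all coordinates in one fibre of size at most $|\Sigma|^{2^r}$ $\Rightarrow$ pigeonhole) is precisely the implicit content, matching the explicit version already given in the proof of Proposition \ref{theo-main-2}.
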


\begin{corollary}\label{CoroZr}
    It holds that  $\log(m)\leq h_{top}^*(X,\sigma,\mathbb{Z}^r)\leq\log(m^{2^r})$, $\IT_{m}(X)\setminus \triangle^{(m)}(X)\neq \emptyset$  and $\IT_{m^{2^r}+1}(X)\setminus \triangle^{(m^{2^r}+1)}(X)=\emptyset$.
\end{corollary}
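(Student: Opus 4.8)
The plan is to read the corollary off from facts already established about $X=\overline{O_\sigma(\eta)}$, together with Propositions \ref{htop*} and \ref{medidas-IT} and Corollary \ref{IT-empty}; throughout I take $n=m$ in the construction of Subsection \ref{Sec-3}, so that $|\Sigma|=m$, and I keep the standing assumption that $(\Gamma_i)_{i\in\mathbb{N}}$ satisfies \eqref{Gammacondition}. First I would prove the lower bound together with $\IT_m(X)\setminus\triangle^{(m)}(X)\neq\emptyset$. Since $G=\mathbb{Z}^r$ is amenable and \eqref{Gammacondition} holds, $X$ has exactly $m$ ergodic measures, so $|M^{\rm erg}(X,\mathbb{Z}^r)|=m$. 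Next I would check that the maximal equicontinuous factor map $\pi\colon X\to\overleftarrow{G}$ is almost $1$-$1$: the array $\eta$ is Toeplitz with period structure $(\Gamma_i)_{i\in\mathbb{N}}$, so $\pi(\eta)=(\Gamma_i)_{i\in\mathbb{N}}$ (one may take $t_i(\eta)=1_G$) and hence $\Aper(\eta)=G\setminus\bigcup_{i\in\mathbb{N}}\Per(\eta,\Gamma_i)=\emptyset$; by Remark \ref{rem-Aper} every $y\in\pi^{-1}(\{\pi(\eta)\})$ then agrees with $\eta$ on all of $G$, so $\pi(\eta)$ has a unique preimage, and since $X$ is minimal this means $\pi$ is almost $1$-$1$. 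Applying Proposition \ref{medidas-IT} with $l=m$ (legitimate because $|M^{\rm erg}(X,\mathbb{Z}^r)|=m>m-1$) gives $\IT_m(X)\setminus\triangle^{(m)}(X)\neq\emptyset$, hence $\IN_m(X)\setminus\triangle^{(m)}(X)\neq\emptyset$, and Proposition \ref{htop*} yields $h^*_{top}(X,\sigma,\mathbb{Z}^r)\geq\log(m)$.

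For the upper bound and the statement $\IT_{m^{2^r}+1}(X)\setminus\triangle^{(m^{2^r}+1)}(X)=\emptyset$, I would invoke Corollary \ref{IT-empty}: for every $s>|\Sigma|^{2^r}=m^{2^r}$ one has $\IN_s(X)\setminus\triangle^{(s)}(X)=\emptyset$, and since $\IT_s(X)\subseteq\IN_s(X)$ also $\IT_s(X)\setminus\triangle^{(s)}(X)=\emptyset$; taking $s=m^{2^r}+1$ gives the claimed equality. Consequently $\{n:\IN_n(X)\setminus\triangle^{(n)}(X)\neq\emptyset\}$ is contained in $\{1,\dots,m^{2^r}\}$ and contains $m$, so by Proposition \ref{htop*} its maximum $N$ obeys $m\leq N\leq m^{2^r}$ and $h^*_{top}(X,\sigma,\mathbb{Z}^r)=\log(N)$, which gives $\log(m)\leq h^*_{top}(X,\sigma,\mathbb{Z}^r)\leq\log(m^{2^r})$.

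Essentially everything here is bookkeeping; the only step needing genuine verification is the hypothesis of Proposition \ref{medidas-IT}, namely that $\pi$ is almost $1$-$1$, and that is precisely where the Toeplitz structure of $\eta$ together with Remark \ref{rem-Aper} enters. I do not expect these methods to pin down $h^*_{top}$ (equivalently $N$) exactly inside $[\log m,\log(m^{2^r})]$: the slack between the lower bound coming from the $m$ ergodic measures and the upper bound coming from the fibre estimate of Proposition \ref{Regio-prox-bound} is exactly what produces the interval in the statement.
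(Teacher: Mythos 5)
Your proposal is correct and follows essentially the same route as the paper: the paper's proof is exactly the combination of the $m$-ergodic-measure count from \eqref{Gammacondition}, Proposition \ref{medidas-IT}, Corollary \ref{IT-empty}, and Proposition \ref{htop*}. Your explicit verification that $\pi$ is almost $1$-$1$ (via $\Aper(\eta)=\emptyset$ and Remark \ref{rem-Aper}) is a hypothesis the paper leaves implicit, so including it only strengthens the argument.
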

\begin{proof}
    As a consequence of (\ref{Gammacondition}), $X$ has exactly $m$ ergodic measures.
    Therefore, $\IT_m(X)\setminus\triangle^{(m)}(X)\neq \emptyset$ by Proposition \ref{medidas-IT}.
   We conclude the proof by using Corollary \ref{IT-empty} and Proposition \ref{htop*}.
\end{proof}

The statement below is a version of Theorem \ref{theo-main} for $G=\mathbb{Z}^r$, and it is a direct consequence of Corollary \ref{CoroZr}.

\begin{corollary}\label{maintheo:Zr}
    Let $r\geq 2$.
    For every $m\geq 2$, there exists a free minimal subshift  $X\subseteq\{1,\dots,m\}^{\mathbb{Z}^r}$ with zero entropy such that  $\log(m)\leq h_{top}^*(X,\sigma,\mathbb{Z}^r)\leq\log(m^{2^r})$, $\IT_{m}(X)\setminus \triangle^{(m)}(X)\neq \emptyset$  and $\IT_{m^{2^r}+1}(X)\setminus \triangle^{(m^{2^r}+1)}(X)=\emptyset$.
\end{corollary}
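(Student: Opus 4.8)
The plan is to let $X$ be exactly the Toeplitz subshift constructed in Subsection~\ref{Abelianfinite} for the group $G=\mathbb{Z}^r$ with $r\geq 2$ and alphabet $\Sigma=\{1,\dots,m\}$, i.e. taking $n=m$; here one exploits the freedom in that construction to choose the exponents $p_j^i$ defining the subgroups $\Gamma_i$ of (\ref{Gammas}) so that $(\Gamma_i)_{i\in\mathbb{N}}$ satisfies condition (\ref{Gammacondition}). With these choices $X=\overline{O_\sigma(\eta)}\subseteq\{1,\dots,m\}^{\mathbb{Z}^r}$ is a minimal subshift, and the estimate $\log(m)\leq h_{top}^*(X,\sigma,\mathbb{Z}^r)\leq\log(m^{2^r})$, together with $\IT_m(X)\setminus\triangle^{(m)}(X)\neq\emptyset$ and $\IT_{m^{2^r}+1}(X)\setminus\triangle^{(m^{2^r}+1)}(X)=\emptyset$, is precisely Corollary~\ref{CoroZr} (which in turn rests on Corollary~\ref{IT-empty}, Proposition~\ref{medidas-IT} and Proposition~\ref{htop*}). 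So the only points left to record are that $X$ has zero topological entropy and that the $\mathbb{Z}^r$-action on $X$ is free.

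Zero entropy is immediate from the construction: $\mathbb{Z}^r$ is amenable and $(\Gamma_i)_{i\in\mathbb{N}}$ satisfies (\ref{Gammacondition}), so, as recalled right after (\ref{Gammacondition}) (see \cite{CeCoGo23}), $X$ has exactly $m$ ergodic measures and zero topological entropy.

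For freeness I would argue only through the factor map onto the maximal equicontinuous factor, using no special feature of $\eta$. The maximal equicontinuous factor of $X$ is the $G$-odometer $\overleftarrow{G}$ associated to $(\Gamma_i)_{i\in\mathbb{N}}$, with factor map $\pi=\pi_{\rm eq}:X\to\overleftarrow{G}$ (Subsection~\ref{G-Toeplitz}), and $(\overleftarrow{G},\phi,G)$ is a free system. If $x\in X$ and $g\in\Stab(x)$, then $\sigma^g x=x$ forces $\phi^g(\pi(x))=\pi(\sigma^g x)=\pi(x)$, whence $g\in\Stab(\pi(x))=\{1_G\}$; thus $\Stab(x)=\{1_G\}$ for every $x\in X$ and $X$ is free. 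Combining this with Corollary~\ref{CoroZr} completes the argument. I do not anticipate a genuine obstacle, since the statement is essentially a repackaging of Corollary~\ref{CoroZr} together with these two easy additions; the only thing that takes a little care is to confirm that all the ingredients of the construction of Subsection~\ref{Abelianfinite} can be arranged simultaneously for $G=\mathbb{Z}^r$ — the nested fundamental domains $D_i\subseteq D_{i+1}$, their F\o lner property, the normalization $n_i=i$, and condition (\ref{Gammacondition}) — which is exactly what is provided there.
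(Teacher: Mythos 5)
Your proposal is correct and follows essentially the same route as the paper, which states this corollary as a direct consequence of Corollary~\ref{CoroZr} applied to the Toeplitz subshift of Subsection~\ref{Abelianfinite} with $\Sigma=\{1,\dots,m\}$. The two points you spell out explicitly --- zero entropy from condition (\ref{Gammacondition}) via \cite{CeCoGo23}, and freeness by pushing a stabilizer element through $\pi_{\rm eq}$ to the free odometer $\overleftarrow{G}$ --- are exactly the details the paper leaves implicit, and your argument for each is sound.
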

 
\subsection{Virtually $\mathbb{Z}^r$ groups}\label{Sec-virtually}

Let $G$ be a group with a finite index normal subgroup $G'$ that is isomorphic to $\mathbb{Z}^r$, for some $r\in\mathbb{N}$. 
Consider $(\Gamma_i)_{i\in\mathbb{N}}$
the sequence of finite index subgroups of $\mathbb{Z}^r$ subgroups given in \eqref{Gammas} with the extra property given in \eqref{Gammacondition}.
Let $(D_i)_{i\in\mathbb{N}}$, $(D_i')_{i\in\mathbb{N}}$ be the sequences defined in Section \ref{Abelianfinite}.
We slightly abuse notation by assuming that the sequences $(\Gamma_n), (D_i)_{i\in\mathbb{N}}$ and $(D_i')_{i\in\mathbb{N }}$ are sequences of  $G'$ instead of $\mathbb{Z}^r$.
Since $G'$ is a subgroup of finite index in $G$, there exists a finite subset $R\subseteq G$ such that $R$ is a set of representatives for $G/G'$ that contains $1_G$.

 The sequence 
 $(D_iR)_{i\in\mathbb{N}}$, where $D_iR=\{dr\in G: d\in D_i, r\in R\} $, is the adequate sequence for the construction in this case. 
 This sequence has the following properties: For each $i\geq 1$,
\begin{itemize}
    \item $1_G\in D_iR$ and $D_iR$ is a set of representatives for $\Gamma_i\backslash G$, the set of right cosets of $\Gamma_i$ in $G$.
    \item $D_iR\subseteq D_{i+1}R$.
    \item $\bigcup_{i\in\mathbb{N}}D_iR=G$.
    \item $D_{i+1}R=\bigcup_{\gamma\in\Gamma_i\cap D_{i+1}}\gamma D_iR$.
\end{itemize}
Moreover, it satisfies the  left F\o lner condition.

\begin{lemma}
  It holds that $(D_iR)_{i\in\mathbb{N}}$ is a left F\o lner sequence for $G$, i.e., for every $g\in G$ 
 \begin{align*}
     \lim_{i\to\infty}\dfrac{|D_iRg\setminus D_iR|}{|D_iR|}=0.
 \end{align*}
\end{lemma}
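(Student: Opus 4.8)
The plan is to reduce the left F\o lner property of $(D_iR)_{i\in\mathbb{N}}$ in $G$ to the F\o lner property of $(D_i)_{i\in\mathbb{N}}$ in $G'\cong\mathbb{Z}^r$ (established in Section \ref{Abelianfinite}), by exploiting the coset decomposition $G=\bigsqcup_{r\in R}G'r$ and the normality of $G'$.

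First I would record two elementary facts. Since $R$ is a transversal of $G/G'$ and $G'$ is normal, the right cosets $\{G'r:r\in R\}$ partition $G$ (recall $G'r=rG'$), the products $dr$ with $d\in D_i$, $r\in R$ are pairwise distinct (from $d_1r_1=d_2r_2$ one gets $r_1r_2^{-1}=d_1^{-1}d_2\in G'$, hence $r_1=r_2$ and $d_1=d_2$), and therefore $D_iR=\bigsqcup_{r\in R}D_ir$ with the $r$-th block contained in $G'r$ and $|D_iR|=|R|\,|D_i|$.

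The heart of the argument is to describe $D_iRg$ for a fixed $g\in G$. For each $r\in R$ write $rg=\gamma_r s_r$, where $s_r\in R$ is the representative of the coset $rgG'$ and $\gamma_r:=rgs_r^{-1}\in G'$. Then $drg=(d\gamma_r)s_r$ with $d\gamma_r\in G'$, so $D_iRg=\bigsqcup_{r\in R}(D_i\gamma_r)s_r$, and the map $r\mapsto s_r$ is a bijection of $R$ — this is exactly where normality of $G'$ enters, since $r_1gG'=r_2gG'$ forces $r_1G'=r_2G'$ via $gG'g^{-1}=G'$. Matching, inside each coset $G's$, the corresponding blocks of $D_iRg$ and $D_iR$ and using that right translation preserves cardinality, one obtains
$$|D_iRg\setminus D_iR|=\sum_{s\in R}\bigl|(D_i\gamma_{r(s)})s\setminus D_is\bigr|=\sum_{s\in R}\bigl|D_i\gamma_{r(s)}\setminus D_i\bigr|,$$
where $r(s)$ denotes the preimage of $s$ under $r\mapsto s_r$.

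To finish, note that for each $s$ the element $\gamma_{r(s)}\in G'$ is fixed, independent of $i$, so the F\o lner property of $(D_i)_{i}$ in the abelian group $G'\cong\mathbb{Z}^r$ gives $|D_i\gamma_{r(s)}\setminus D_i|/|D_i|\to0$ as $i\to\infty$; dividing the displayed identity by $|D_iR|=|R|\,|D_i|$ and summing the finitely many terms yields $|D_iRg\setminus D_iR|/|D_iR|\to0$, as desired. The delicate point I anticipate is the bookkeeping in the third step: checking that the union $D_iRg=\bigsqcup_{r}(D_i\gamma_r)s_r$ is genuinely disjoint and that $r\mapsto s_r$ permutes $R$, both of which rest on $G'$ being normal; apart from that the substantive input — the F\o lner property of $(D_i)$ in $G'$ — is imported from Section \ref{Abelianfinite}. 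As an alternative one may first observe that the set of $g\in G$ satisfying the conclusion is a subgroup (closure under products and inverses follows from $|D_iRgh\setminus D_iRh|=|D_iRg\setminus D_iR|$ and $|D_iR\setminus D_iRg|=|D_iRg\setminus D_iR|$) and then check the condition only on the finite generating set $\{\tilde{e_1},\dots,\tilde{e_r}\}\cup R$, but the direct computation above seems shorter.
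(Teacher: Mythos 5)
Your proposal is correct and follows essentially the same route as the paper: decompose $D_iR$ and $D_iRg$ into blocks lying in the cosets $G's$, use normality of $G'$ to write $rg=\gamma_r s_r$ with $\gamma_r\in G'$ and to see that $r\mapsto s_r$ permutes $R$, and then reduce to the F\o lner property of $(D_i)_{i\in\mathbb{N}}$ in $G'\cong\mathbb{Z}^r$. The only (cosmetic) difference is that you obtain an exact block-by-block identity for $|D_iRg\setminus D_iR|$, whereas the paper settles for the subadditive estimate $|D_iRg\setminus D_iR|\leq\sum_{r\in R}|d_rt_rD_i\setminus D_i|\cdot\frac{|D_iR|}{|D_i|\,|R|}$ before dividing; both yield the limit.
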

\begin{proof}
    Notice that for each $g\in G$, there exists $d\in G'$ and $s\in R$ such that $g=ds$.
    Let $r\in R$.
    As $G'$ is normal in $G$, there exists $d_r\in G'$ such that $rd=d_rr$. 
    Therefore, $rg=d_rrs=d_rt_rs_r$, where  $rs=t_rs_r$ for some $t_r\in G'$ and $s_r\in R$.
    Thus, for any $i\in\mathbb{N}$
    \begin{align*}
        \dfrac{|D_iRg\setminus D_iR|}{|D_iR|}=&\dfrac{\left|\left(\bigcup_{r\in R}D_id_rt_rs_r\right)\setminus D_iR\right|}{|D_iR|}\\
        \leq &\sum_{r\in R}\dfrac{\left|d_rt_rD_is_r\setminus D_iR\right|}{|D_iR|}\\
        \leq &\sum_{r\in R}\dfrac{\left|d_rt_rD_iR\setminus D_iR\right|}{|D_iR|}\\
        = &\sum_{r\in R}\dfrac{\left|d_rt_rD_i\setminus D_i\right|}{|D_i|}.
    \end{align*}
    The proof concludes using the fact that $(D_i)_{i\in\mathbb{N}}$ is a F\o lner sequence for $G'$.
\end{proof}

Denote $\Sigma=\{1,\dots,m,\beta\}$,  where $\beta$ is a symbol that is not in $\{1,\dots, m\}$. 
Given $m\in\mathbb{N}$, we construct $\eta\in\Sigma^G$ in such a way that its subshift associated, $\overline{O_\sigma(\eta)}$, satisfies the conditions in Theorem \ref{theo-main}.
This construction is analogous to that presented in \cite{CeCoGo23}, except that in this case $\Gamma_i$ is not necessarily normal in $G$ and the tiling condition for the sequence $(D_iR)_{i\in\mathbb{N}}$ differs from that assumed in \cite{CeCoGo23}.
Let $(\alpha_i)_{i\in\mathbb{N}}$ be the sequence defined by $\alpha_i=j\in\{1,\dots,m\}$ if $i\equiv j \pmod{m}$. 
We define $\eta\in\Sigma^G$ as follows:

\medskip

\textbf{Step 1:} Consider $J(0)=\{1_G\}$.
Define $\eta(g)=\alpha_1$ for every $g\in \Gamma_1J(0)$, and $\eta(g)=\beta$ for each $g\in \Gamma_1(R\setminus \{1_G\})$.

\medskip

\textbf{Step 2:} Consider $J(1)=D_1\setminus J(0)\Gamma_1$.
Define $\eta(g)=\alpha_2$ for every $g\in \Gamma_2J(1)R$.

\medskip

\textbf{Step $\bm{n+1}$:} Let 
\begin{align*}
    J(n)=D_n\setminus \bigcup_{i=1}^{n-1} \Gamma_{i+1}J(i).
\end{align*}
Define $\eta(g)=\alpha_{n+1}$ for each $g\in \Gamma_{n+1}J(n)R$.

\medskip

This construction yields a Toeplitz array in $\Sigma^G$.
Next, we need some technical lemmas that describe the behavior of the sets $J(n)$ and the period set of $\eta\in \Sigma^G$ as constructed before.
The following lemma is proven similarly to \cite[Lemma 4.2]{CeCoGo23}
\begin{lemma}\label{Lemma:J's}
For every $n\in\mathbb{N}$, it holds that 
    $$J(n)=\bigcup_{\gamma\in (D_{n}\cap \Gamma_{n-1})\setminus\{1_G\}}\gamma J(n-1).$$
\end{lemma}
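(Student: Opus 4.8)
\textbf{Proof proposal for Lemma \ref{Lemma:J's}.}

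The plan is to proceed by induction on $n$ and to unwind the recursive definition of $J(n)$ using the tiling identity $D_n = \bigcup_{\gamma \in D_n \cap \Gamma_{n-1}} \gamma D_{n-1}$ together with the definition $J(n) = D_n \setminus \bigcup_{i=1}^{n-1} \Gamma_{i+1} J(i)$. The base case $n=1$ amounts to checking $J(1) = D_1 \setminus \Gamma_1 J(0)$, which is true by the definition of $J(1)$ in Step 2 (recall $J(0)=\{1_G\}$ and $1_G \in D_1$), with the understanding that the union over $(D_1 \cap \Gamma_0)\setminus\{1_G\}$ should be read in the degenerate way that makes it consistent with Step 1--2; I would state the normalization being used explicitly at the start. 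For the inductive step I would start from $D_n = \bigsqcup_{\gamma \in D_n \cap \Gamma_{n-1}} \gamma D_{n-1}$ and isolate the term $\gamma = 1_G$, so that $D_n = D_{n-1} \sqcup \bigsqcup_{\gamma \in (D_n\cap\Gamma_{n-1})\setminus\{1_G\}} \gamma D_{n-1}$.

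The key step is to show that subtracting $\bigcup_{i=1}^{n-1}\Gamma_{i+1}J(i)$ from $D_n$ has the following effect on this decomposition: it deletes the block $D_{n-1}$ entirely (since $D_{n-1}=\bigcup_{i=0}^{n-2}\Gamma_{i+1}J(i)$ is exactly the periodic part of $D_{n-1}$ filled up through step $n-1$, by the inductive description of the $J(i)$), and on each remaining block $\gamma D_{n-1}$ with $\gamma \in (D_n \cap \Gamma_{n-1})\setminus\{1_G\}$ it acts exactly as subtracting $\gamma$ times the periodic part, yielding $\gamma J(n-1)$. Concretely, I would show that for $\gamma \in \Gamma_{n-1}$ one has $\gamma D_{n-1} \cap \Gamma_{i+1}J(i) = \gamma(D_{n-1}\cap \Gamma_{i+1}J(i))$ for all $i \le n-2$: the inclusion $\supseteq$ is clear, and for $\subseteq$ one uses that $\Gamma_{i+1} \subseteq \Gamma_{n-1}$ is normal (here we are in the abelian $\mathbb{Z}^r$-setting of Section \ref{Abelianfinite}, so $\Gamma_{n-1}$ is normal and even central in $G'$), so that if $\gamma d = \gamma' h$ with $d \in D_{n-1}$, $h \in J(i)$, $\gamma' \in \Gamma_{i+1}$, then $d = (\gamma^{-1}\gamma'\gamma)(\gamma^{-1}\gamma\, h)$ shows $d \in \Gamma_{i+1}J(i)$ as well (using $\gamma^{-1}\gamma'\gamma \in \Gamma_{i+1}$). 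Summing over $i=1,\dots,n-2$ and over the blocks, and separately noting that $\gamma D_{n-1} \cap \Gamma_{i+1}J(i) = \emptyset$ when $i$ forces a coset of $\Gamma_{n-1}$ distinct from $\gamma\Gamma_{n-1}$ — so only the terms with $i \le n-2$ and the block indexed by that same $\gamma$ contribute — gives
\[
\gamma D_{n-1} \setminus \bigcup_{i=1}^{n-1}\Gamma_{i+1}J(i) \;=\; \gamma\Bigl(D_{n-1}\setminus \bigcup_{i=1}^{n-2}\Gamma_{i+1}J(i)\Bigr) \;=\; \gamma J(n-1),
\]
where the last equality is the inductive hypothesis applied in the form $J(n-1) = D_{n-1}\setminus\bigcup_{i=1}^{n-2}\Gamma_{i+1}J(i)$ (i.e.\ the original recursive definition, equivalently the claimed formula). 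Taking the union over $\gamma \in (D_n\cap\Gamma_{n-1})\setminus\{1_G\}$ and discarding the $\gamma = 1_G$ block (which lies entirely inside $\bigcup_{i=1}^{n-1}\Gamma_{i+1}J(i)$ since $D_{n-1}\subseteq \bigcup_{i=0}^{n-2}\Gamma_{i+1}J(i)$ and $\Gamma_1 J(0) = \Gamma_1 \supseteq$ the step-1 part, but more simply since by induction $D_{n-1}$ is the disjoint union of the $\gamma J(i)$'s that are all swallowed) yields the claim.

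The main obstacle I anticipate is bookkeeping the interaction between the right multiplication by $R$ appearing in the definition of $\eta$ (the sets $\Gamma_{i+1}J(i)R$) and the left-coset structure, and making sure the deletion sets $\Gamma_{i+1}J(i)$ referenced in the definition of $J(n)$ are the "$R$-free" versions — so one must be careful that $J(n) \subseteq D_n \subseteq G'$ throughout, which keeps everything inside the abelian subgroup and lets the normality/centrality argument above go through cleanly. A secondary technical point is the degenerate reading of the base case $n=1$: the union $\bigcup_{\gamma \in (D_1 \cap \Gamma_0)\setminus\{1_G\}}\gamma J(0)$ must be interpreted with $\Gamma_0 := G'$ (or whatever convention the authors intend) so that it reduces correctly; I would simply verify $J(1) = D_1 \setminus \Gamma_1 J(0)$ directly from Step 2 and note this matches the stated formula under that convention. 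Once the abelian normality is used to factor the intersections past $\gamma$, the rest is a routine induction.
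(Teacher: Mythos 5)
Your argument is correct in substance and is the natural one. Note that the paper itself gives no proof of this lemma — it only points to \cite[Lemma 4.2]{CeCoGo23} — so there is nothing in-paper to compare against; your block-by-block subtraction along the tiling $D_n=\bigsqcup_{\gamma\in D_n\cap\Gamma_{n-1}}\gamma D_{n-1}$ is exactly the expected two-inclusion argument in disguise, and you are right that everything takes place inside the abelian subgroup $G'$, so the set $R$ plays no role here. Three small repairs before writing it up. (i) The factorization $d=(\gamma^{-1}\gamma'\gamma)(\gamma^{-1}\gamma h)$ is wrong (it equals $\gamma^{-1}\gamma'\gamma h$, not $d=\gamma^{-1}\gamma'h$) and is unnecessary: from $\gamma d=\gamma'h$ you get $d=\gamma^{-1}\gamma'h$ directly, and $\gamma^{-1}\gamma'\in\Gamma_{i+1}$ because $\gamma\in\Gamma_{n-1}\subseteq\Gamma_{i+1}$ for $i\leq n-2$; the sequence $(\Gamma_i)_i$ is decreasing, so your phrase ``$\Gamma_{i+1}\subseteq\Gamma_{n-1}$ is normal'' has the containment backwards. (ii) The parenthetical ``$D_{n-1}=\bigcup_{i=0}^{n-2}\Gamma_{i+1}J(i)$'' is false, since $J(n-1)$ is a nonempty subset of $D_{n-1}$ disjoint from that union; the correct reason the $\gamma=1_G$ block is deleted is that $D_{n-1}=J(n-1)\sqcup\bigl(D_{n-1}\cap\bigcup_{i\leq n-2}\Gamma_{i+1}J(i)\bigr)$ and the $i=n-1$ term $\Gamma_nJ(n-1)$ of the subtracted union contains $J(n-1)$ (indeed $D_{n-1}\cap\Gamma_nJ(n-1)=J(n-1)$ by the fundamental-domain property). (iii) Your disjointness claim for the $i=n-1$ term on the blocks with $\gamma\neq 1_G$ should be made explicit: if $\gamma d=\gamma_nh$ with $d,h\in D_{n-1}$, $\gamma_n\in\Gamma_n$ and $\gamma\in\Gamma_{n-1}$, then $\gamma_n^{-1}\gamma\in\Gamma_{n-1}$ together with the fundamental-domain property forces $\gamma=\gamma_n\in\Gamma_n\cap D_n=\{1_G\}$, which is excluded. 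Finally, you are right to flag the indexing: the displayed definition of $J(n)$ must be read with the union starting at $i=0$ (as in Subsection \ref{Sec-3}); with the printed $i=1$ one would get $1_G\in J(n)$ for all $n$ and the lemma would be false.
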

\begin{lemma}\label{Lemma:Per}
    For every $n\geq 1$ it is true that
    $$\Per({\eta},\Gamma_n)=\bigcup_{i=1}^{n-1}\Gamma_{i+1}J(i)R.$$
 Furthermore,  $\Per({\eta},\Gamma_1,\beta)=\Gamma_1(R\setminus\{1_G\})$ and $\Per({\eta},\Gamma_1,\alpha_1)=\Gamma_1$.
\end{lemma}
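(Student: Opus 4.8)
The plan is to prove both inclusions of the claimed identity $\Per(\eta,\Gamma_n)=\bigcup_{i=0}^{n-1}\Gamma_{i+1}J(i)R$, where the $i=0$ term is $\Gamma_1J(0)R=\Gamma_1R$; its splitting $\Gamma_1R=\Gamma_1\sqcup\Gamma_1(R\setminus\{1_G\})$ into the $\alpha_1$-layer and the $\beta$-layer is precisely the ``Furthermore'' part. I first record the combinatorial skeleton of the construction. The sets $\Gamma_1$, $\Gamma_1(R\setminus\{1_G\})$ and $\Gamma_{k+1}J(k)R$ for $k\geq1$ partition $G$ — this is exactly the statement that $\eta$ is a well-defined point of $\Sigma^G$ — and $\eta$ is constantly $\alpha_1$, $\beta$, $\alpha_{k+1}$ on them respectively; write $\ell(g)$ for the unique $k$ with $g\in\Gamma_{k+1}J(k)R$ (or $g\in\Gamma_1R$, $k=0$), and note that each set $\Gamma_{k+1}J(k)R$ is a union of right $\Gamma_{k+1}$-cosets. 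Since every $J(k)$ lies in $D_k\subseteq G'$, the $J(k)$ commute with all the $\Gamma_i\leq G'$ and the finitely many elements of $R$ act only as a passive right factor, which is what lets us ignore that the $\Gamma_i$ need not be normal in $G$. The inclusion $\supseteq$ is then immediate: if $i\leq n-1$ and $\ell(g)=i$, then $\Gamma_n\leq\Gamma_{i+1}$ forces the whole right coset $\Gamma_n g$ into $\Gamma_{i+1}g$, hence into the $i$-th layer, on which $\eta$ is constant, so $g\in\Per(\eta,\Gamma_n)$; for $i=0$, $n=1$ this gives $\Gamma_1\subseteq\Per(\eta,\Gamma_1,\alpha_1)$ and $\Gamma_1(R\setminus\{1_G\})\subseteq\Per(\eta,\Gamma_1,\beta)$.

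For $\subseteq$, take $g\in\Per(\eta,\Gamma_n)$ with level $j:=\ell(g)$ and suppose, for a contradiction, that $j\geq n$. Writing $g=\zeta_0hr$ with $\zeta_0\in\Gamma_{j+1}\subseteq\Gamma_n$, $h\in J(j)\subseteq D_j$, $r\in R$, we have $\Gamma_n g=\Gamma_n hr$. Applying the tiling identity $D_{i+1}=\bigsqcup_{\gamma\in\Gamma_i\cap D_{i+1}}(\gamma+D_i)$ successively for $i+1=j,j-1,\dots,n+1$ expresses $h=\gamma_*+u$ with $u\in D_n$ and $\gamma_*$ a sum of elements of $\Gamma_{k-1}\subseteq\Gamma_n$ ($n+1\leq k\leq j$), so $\Gamma_n g=\Gamma_n u r$. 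Moreover $u\in J(n)$: otherwise $u\in D_n\setminus J(n)\subseteq\bigcup_{i<n}\Gamma_{i+1}J(i)$ gives $\ell(ur)=i_0$ for some $i_0<n$, and since $ur\in\Gamma_n g\subseteq\Gamma_{i_0+1}g$ and layers are unions of right $\Gamma_{i_0+1}$-cosets, we would get $\ell(g)=i_0<n$, contradicting $j\geq n$. Thus $\Gamma_n g=\Gamma_n ur$ with $u\in J(n)$ and $\eta(ur)=\alpha_{n+1}$. Now pick $\delta\in(\Gamma_n\cap D_{n+1})\setminus\{1_G\}$, nonempty since $D_{n+1}$ is a fundamental domain for $G'/\Gamma_{n+1}$ and $\Gamma_{n+1}\subsetneq\Gamma_n$; then $\delta+u\in\delta+D_n\subseteq D_{n+1}$ and, by Lemma~\ref{Lemma:J's} at level $n+1$, $\delta+u\in\delta+J(n)\subseteq J(n+1)$, so $(\delta+u)r$ has level $n+1$ and $\eta((\delta+u)r)=\alpha_{n+2}$. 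Since $(\delta+u)r=\delta(ur)\in\Gamma_n ur=\Gamma_n g$ and $\alpha_{n+1}\neq\alpha_{n+2}$ (consecutive residues mod $m$, $m\geq2$), $\eta$ is non-constant on $\Gamma_n g$, contradicting $g\in\Per(\eta,\Gamma_n)$. Hence $j\leq n-1$, which is the desired inclusion; the $n=1$ case of the ``Furthermore'' part follows as well, since a point of $\Per(\eta,\Gamma_1)$ of level $0$ lying in $\Gamma_1(R\setminus\{1_G\})$ carries the value $\beta\neq\alpha_1$.

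I expect the main obstacle to be the reduction in the second paragraph: given an arbitrary representative of the right $\Gamma_n$-coset of $g$, passing to one in $D_n$ while checking that every translation used stays inside $\Gamma_n$, so that the coset is genuinely unchanged. Once that is secured, the ``refresh'' mechanism of the construction — $J(n)=D_n\setminus\bigcup_{i<n}\Gamma_{i+1}J(i)$ together with $J(n+1)=\bigsqcup_{\gamma\in(\Gamma_n\cap D_{n+1})\setminus\{1_G\}}(\gamma+J(n))$ from Lemma~\ref{Lemma:J's} — forces any right $\Gamma_n$-coset not already confined to a layer of level $<n$ to meet both level $n$ and level $n+1$, and the two consecutive symbols $\alpha_{n+1}\neq\alpha_{n+2}$ break constancy; the non-normality of the $\Gamma_i$ in $G$ never intervenes because all the combinatorics take place inside the abelian subgroup $G'$.
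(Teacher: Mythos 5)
Your proof is correct and follows essentially the same route as the paper's: the inclusion $\supseteq$ comes straight from the construction, and for $\subseteq$ one reduces $g$ to a coset representative $ur$ with $u\in J(n)$ and derives the contradiction $\eta(ur)=\alpha_{n+1}\neq\alpha_{n+2}=\eta(\delta ur)$ for $\delta\in(\Gamma_n\cap D_{n+1})\setminus\{1_G\}$. You are merely more explicit about obtaining the representative in $D_nR$ (the paper simply invokes that $D_nR$ is a set of representatives of $\Gamma_n\backslash G$), and you correctly read the union as starting at $i=0$ so that it is consistent with the ``Furthermore'' clause.
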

\begin{proof}
From the construction, it follows that $\bigcup_{i=1}^{n-1}\Gamma_{i+1}J(i)R\subseteq \Per(\eta, \Gamma_n)$.
Now, if $g \in \Per(\eta, \Gamma_n)\setminus \bigcup_{i=1}^{n-1}\Gamma_{i+1}J(i)R$ we can consider $\gamma\in \Gamma_n$, $r\in R$ and $d\in D_n$ so that $g=\gamma dr$.
Moreover, we have $dr\in (D_nR\cap\Per(\eta, \Gamma_n))\setminus \bigcup_{i=1}^{n-1}\Gamma_{i+1}J(i)R$ and it in turn implies $d\in J(n)$.
This is a contradiction with the fact that for every $\gamma\in (\Gamma_n\cap D_{n+1})\setminus\{1_G\}$ we have that $\eta(\gamma dr)=\alpha_{n+2}$ and $\eta(dr)=\alpha_{n+1}$.
The second part of the lemma follows analogously.
\end{proof}
\begin{lemma}\label{Lemma:R}
For every $n\geq 2$ and $\alpha\in\{1,\dots, m\}$, it holds
    $$\Per(\eta,\Gamma_n,\alpha)\cap D_nR=(\Per(\eta,\Gamma_n,\alpha)\cap D_n)R.$$
\end{lemma}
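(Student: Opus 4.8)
The plan is to prove the set equality $\Per(\eta,\Gamma_n,\alpha)\cap D_nR = (\Per(\eta,\Gamma_n,\alpha)\cap D_n)R$ by showing both inclusions, with the right-to-left inclusion being essentially immediate from the construction and the left-to-right inclusion being the substantive part. Throughout I will use the structural description $\Per(\eta,\Gamma_n) = \bigcup_{i=1}^{n-1}\Gamma_{i+1}J(i)R$ from \cref{Lemma:Per}, and more precisely I will want the finer statement that for $\alpha = \alpha_{i+1} \pmod m$ the $\alpha$-periodic points at level $n$ are the union of those $\Gamma_{i+1}J(i)R$ over the appropriate $i$'s. The key point making everything work is that the construction assigns values along full right-$R$-translates: the defining clause at Step $n+1$ reads $\eta(g)=\alpha_{n+1}$ for every $g\in\Gamma_{n+1}J(n)R$, so the period sets are genuinely right-$R$-invariant in the sense of being unions of sets of the form (something)$\cdot R$.

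First I would handle the inclusion $\supseteq$. Take $d\in \Per(\eta,\Gamma_n,\alpha)\cap D_n$ and $r\in R$; I must check $dr\in\Per(\eta,\Gamma_n,\alpha)$, i.e.\ that $dr\in D_nR$ (clear) and that $\eta(\gamma^{-1}dr)=\alpha$ for all $\gamma\in\Gamma_n$. Since $d$ is $\Gamma_n$-periodic with value $\alpha$, by \cref{Lemma:Per} we have $d\in\Gamma_{i+1}J(i)$ for the unique $i\le n-1$ with $\alpha_{i+1}=\alpha$, hence $dr\in\Gamma_{i+1}J(i)R$, and the construction gives $\eta(dr)=\alpha_{i+1}=\alpha$; applying the same reasoning to $\gamma^{-1}d$ (which is again in $\Gamma_{i+1}J(i)$ because $\Gamma_n\subseteq\Gamma_{i+1}$ when $i+1\le n$) shows $\eta(\gamma^{-1}dr)=\alpha$ for every $\gamma\in\Gamma_n$. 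So $dr\in\Per(\eta,\Gamma_n,\alpha)$.

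For the inclusion $\subseteq$, take $g\in\Per(\eta,\Gamma_n,\alpha)\cap D_nR$ and write $g=dr$ with $d\in D_n$, $r\in R$ — this decomposition is unique because $D_nR$ is a set of representatives for $\Gamma_n\backslash G$ and in fact $D_n$ tiles $G'/\Gamma_n$ while $R$ tiles $G/G'$. I need to produce $d'\in\Per(\eta,\Gamma_n,\alpha)\cap D_n$ with $g\in d'R$; the natural candidate is $d'=d$, so the content is to show $d\in\Per(\eta,\Gamma_n,\alpha)$. By \cref{Lemma:Per}, $g=dr\in\Per(\eta,\Gamma_n)$ forces $dr\in\Gamma_{i+1}J(i)R$ for some $i\le n-1$, say $dr=\gamma u r'$ with $\gamma\in\Gamma_{i+1}$, $u\in J(i)\subseteq D_i$, $r'\in R$. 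Using normality of $G'$ in $G$ and the fact that $R$ represents $G/G'$, I can compare this against $d\in D_i\subseteq G'$ times $r\in R$: reducing modulo $G'$ gives $r'\equiv r$, hence $r'=r$, and then $d=\gamma u\in\Gamma_{i+1}J(i)\subseteq\Per(\eta,\Gamma_n,\alpha_{i+1})$. The value is $\alpha=\alpha_{i+1}$ by Step $i+1$ of the construction applied to $dr$. Thus $d\in\Per(\eta,\Gamma_n,\alpha)\cap D_n$ and $g=dr\in(\Per(\eta,\Gamma_n,\alpha)\cap D_n)R$.

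The main obstacle I anticipate is bookkeeping the interaction between left cosets of $\Gamma_i$ (used to define $J(i)$ and the period sets) and right cosets of $G'$ (used to define $R$), since $\Gamma_i$ need not be normal in $G$ — this is exactly the point the authors flag as the difference from \cite{CeCoGo23}. Concretely, I must be careful that in the equation $dr=\gamma u r'$ the ``$R$-part'' can be read off unambiguously; the clean way is to project to $G/G'$, where $\Gamma_{i+1}\le G'$ and $u\in D_i\subseteq G'$ both die, leaving $\bar r = \bar r'$ in $G/G'$, whence $r=r'$ since $R$ is a transversal. Once that is secured the rest is a direct appeal to \cref{Lemma:Per} and the definition of $\eta$, with no estimates involved. (The analogous statement for $\alpha=\beta$ is not claimed here and would need the $n=1$ clause separately, so I would not address it.)
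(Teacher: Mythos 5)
Your proof is correct and follows essentially the same route as the paper's: both directions reduce to Lemma \ref{Lemma:Per} together with the observation that $\eta$ is constant, equal to $\alpha_{i+1}$, on each set $\Gamma_{i+1}J(i)R$, so periodicity of $dr$ and of $d$ are equivalent. The only differences are cosmetic — you make explicit the identification $r=r'$ by projecting to $G/G'$ (which the paper leaves implicit), and two harmless slips ($i$ with $\alpha_{i+1}=\alpha$ is not unique, and "$d\in D_i$" should read "$d\in D_n$") that do not affect the argument.
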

\begin{proof}
Let $n\geq 2$ and $\alpha\in \{1,\dots,m\}$. 
Consider $dr\in \Per(\eta,\Gamma_n,\alpha)\cap D_nR$, where $d\in D_n$ and $r\in R$.
Lemma \ref{Lemma:Per} implies that $d\in J(i)$ for some $i\leq n-1$.
Furthermore, we conclude that $\eta(\gamma dr)=\eta(\gamma d)=\alpha$ for each $\gamma\in\Gamma_{i+1}$, in particular, for each $\gamma\in \Gamma_n$.
Therefore, $d\in \Per(\eta,\Gamma_n,\alpha)\cap D_n$ and consequently, $\Per(\eta,\Gamma_n,\alpha)\cap D_n R\subseteq (\Per(\eta,\Gamma_n,\alpha)\cap D_n) R $.
For the converse, if $dr\in (\Per(\eta,\Gamma_n,\alpha)\cap D_n)R$, then $d\in J(i)$ for some $i\leq n-1$ and the definition of $\eta$ implies that $\eta(\gamma dr)=\eta(\gamma d)$ for every $\gamma \in\Gamma_n$.
Therefore,  $dr\in \Per(\eta, \Gamma_n,\alpha)\cap D_n R$.
\end{proof}

\begin{proposition}
    The sequence $(\Gamma_i)_{i\in\mathbb{N}}$ is a period structure for ${\eta}$. 
\end{proposition}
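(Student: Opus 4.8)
The plan is to verify the two defining conditions of a period structure for $\eta$: namely that $(\Gamma_i)_{i\in\mathbb{N}}$ is a nested sequence of finite index subgroups with $\bigcup_{i\in\mathbb{N}}\Per(\eta,\Gamma_i)=G$, and that each $\Gamma_i$ is an essential group of periods of $\eta$. The nestedness and finite index are immediate from \eqref{Gammas}, and $\bigcap_i\Gamma_i=\{1_{G'}\}$ was arranged in Section \ref{Abelianfinite}. So the two substantive tasks are (i) showing the periodic parts exhaust $G$, and (ii) showing essentiality.

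For (i), I would use Lemma \ref{Lemma:Per}, which identifies $\Per(\eta,\Gamma_n)=\bigcup_{i=1}^{n-1}\Gamma_{i+1}J(i)R$ together with the $\Gamma_1$-level sets. Since $D_n=\bigsqcup_{i=0}^{n-1}(D_n\cap\Gamma_{i+1})J(i)$ by the construction of the $J(i)$'s (iterating Lemma \ref{Lemma:J's}), and since $D_nR$ is a set of right-coset representatives for $\Gamma_n\backslash G$ with $\bigcup_n D_nR=G$, every $g\in G$ lies in some $\Gamma_n J(i) R$ with $i\le n-1$ once $n$ is large enough. Hence $G=\bigcup_{n\in\mathbb{N}}\Per(\eta,\Gamma_n)$.

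For (ii), the essentiality of $\Gamma_n$: I must show that if $g\in G$ satisfies $\Per(\eta,\Gamma_n,\alpha)\subseteq\Per(\sigma^g\eta,\Gamma_n,\alpha)$ for all $\alpha\in\Sigma$, then $g\in\Gamma_n$. Writing $g=\gamma d r$ with $\gamma\in\Gamma_n$, $d\in D_n$, $r\in R$, it suffices to show $d=1_{G'}$ and $r=1_G$. Using Lemma \ref{no-normal}, the inclusion becomes a statement comparing $\Per(\eta,\Gamma_n,\alpha)$ with $g\Per(\eta,g^{-1}\Gamma_n g,\alpha)$; reducing modulo $\Gamma_n$ via Lemma \ref{Lemma:R} (which says the periodic part respects the $R$-decomposition) one sees that left-translation by $g$ must fix the pattern of symbols on $D_nR$ induced by $\eta$ at level $n$. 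The symbols $\alpha_1,\dots,\alpha_m$ appearing on the various $\Gamma_{i+1}J(i)R$ for $i\le n-1$ are all distinct as residues mod $m$ up to level $n$ (by the choice $p_j^i>2i+1$ forcing enough distinct classes, or more simply because the relevant indices $i+1$ are $\le n$ and we may assume $n\le m$, or handle it by noting the block at level $i$ carries symbol $\alpha_{i+1}$), so the translate cannot permute blocks of different levels; and the $\beta$-block $\Gamma_1(R\setminus\{1_G\})$ pins down the $R$-coordinate, forcing $r=1_G$. Then within $G'$ the argument is exactly the $\mathbb{Z}^r$ essentiality argument from \cite{CeCoGo23}, forcing $d\in\Gamma_n\cap D_n=\{1_{G'}\}$, hence $g\in\Gamma_n$.

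The main obstacle will be step (ii), specifically handling the fact that $\Gamma_n$ is \emph{not} normal in $G$ (only in $G'$), so conjugates $g^{-1}\Gamma_n g$ intervene and one cannot simply quotient by $\Gamma_n$ on both sides. The device is to carry everything through the right-coset space $\Gamma_n\backslash G$ with representatives $D_nR$, using Lemma \ref{Lemma:R} to control how $R$ interacts with the periodic structure and Lemma \ref{Lemma:J's} to control the $G'$-part, thereby decoupling the $R$-coordinate (pinned by the $\beta$-symbols introduced in Step 1) from the $\mathbb{Z}^r$-coordinate (handled by the abelian argument already established). Once the two coordinates are separated, essentiality follows from the corresponding facts for $G'\cong\mathbb{Z}^r$ together with the injectivity built into the $R$-labeling.
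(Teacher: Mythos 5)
Your overall skeleton is reasonable, and two of your ingredients do match what the paper does: the exhaustion $G=\bigcup_n\Per(\eta,\Gamma_n)$ via Lemma \ref{Lemma:Per}, and the use of the symbol $\beta$ on $\Gamma_1(R\setminus\{1_G\})$ to force the $R$-component of $g$ to be trivial (the paper's base case does exactly this: if $g=dr$ with $r\neq 1_G$, then $r^{-1}d^{-1}r$ would have to carry the symbol $\beta$, contradicting $r^{-1}d^{-1}r\in G'$ and $\eta^{-1}(\beta)\cap G'=\emptyset$). The genuine gap is in the middle of your step (ii): the claim that the symbols carried by the blocks $\Gamma_{i+1}J(i)R$, $i\leq n-1$, are pairwise distinct is false as soon as $n>m$, because $\alpha_i=\alpha_{i+m}$ by definition. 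Your proposed fixes do not repair this: essentiality must be proved for every $n\in\mathbb{N}$ with $m$ fixed, so you may not ``assume $n\leq m$'', and ``the block at level $i$ carries symbol $\alpha_{i+1}$'' is a restatement, not an argument. Consequently ``the translate cannot permute blocks of different levels'' is unjustified: nothing in the symbol pattern alone prevents $g$ from matching a level-$i$ block against a level-$(i+m)$ block.

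The paper sidesteps this by inducting on $n$. Having shown that $\Per(\eta,\Gamma_n,\alpha)\subseteq\Per(\sigma^g\eta,\Gamma_n,\alpha)$ implies the same containment at level $n-1$ (by writing $\gamma_{n-1}=\gamma\gamma_n$ with $\gamma\in D_n\cap\Gamma_{n-1}$ and $\gamma_n\in\Gamma_n$), it concludes $g\in\Gamma_{n-1}$ from the inductive hypothesis, and then only has to rule out $g\in\Gamma_{n-1}\setminus\Gamma_n$; for that it compares a point $h\in J(n-1)$ (symbol $\alpha_n$) with $\gamma h\in J(n)$ (symbol $\alpha_{n+1}$), so that only the single inequality $\alpha_n\neq\alpha_{n+1}$ --- always true since $m\geq 2$ --- is needed. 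If you want to keep your non-inductive outline, the viable version is: pin $r=1_G$ with the $\beta$-argument, observe that $\eta|_{G'}$ is exactly the abelian Toeplitz array of \cite{CeCoGo23} and that the period-containment hypothesis restricts from $G$ to $G'$ once $g\in G'$, and then quote essentiality of $\Gamma_n$ for that array. That reduction is sound, but it is doing all the work you currently attribute to the (false) distinct-symbols claim, and it still hides the same consecutive-levels induction inside the cited abelian result.
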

\begin{proof}
    We use induction on $i$:
    For $i=1$, let us assume that for some $g\in G$  
    \begin{align}\label{essential}
        \Per({\eta},\Gamma_1,\alpha)\subseteq \Per(\sigma^g{\eta},\Gamma_1,\alpha), \mbox{ for each } \alpha\in\Sigma.  
    \end{align}
    There exist $d\in G'$ and $r\in R$ such that $g=dr$. 
    Assume that $r\neq 1_G$. 
    Using that
 $\Per({\eta},\Gamma_1, \beta)=\Gamma_1(R\setminus\{1_G\})$, we obtain  ${\eta}(g^{-1}\gamma r')=\beta$ for every  $r'\in R\setminus\{1_G\}$ and $\gamma\in \Gamma_1$.
 In particular, ${\eta}(r^{-1}d^{-1}r)=\beta$. 
 By definition of $\eta$,  $r^{-1}dr\in \Gamma_1(R\setminus\{1_G\})$, which is a contradiction since $r^{-1}dr\in G'$.
 Thus, $r=1_G$.
 From \eqref{essential} and the fact that $G'$ is commutative, we  deduce $g^{-1}\in\Per({\eta},\Gamma_1,\alpha_1)$ and consequently, $g^{-1}\in\Gamma_1$, as we desired.

\medskip

 Assume the result is true for $i-1\geq 1$.
 Let $g\in G$ be such that $\Per({\eta},\Gamma_i, \alpha)\subseteq \Per(\sigma^g{\eta},\Gamma_i,\alpha)$ for each $\alpha\in\{1,\dots,m,\beta\}$.
 For each $\gamma_{i-1}\in\Gamma_{i-1}$, there exist $\gamma\in D_i\cap \Gamma_{i-1}$ and $\gamma_i\in \Gamma_i$ so that $\gamma_{i-1}=\gamma \gamma_i$. 
 If $w\in\Per({\eta},\Gamma_{i-1},\alpha)$, then $\gamma w\in\Per({\eta},\Gamma_{i-1},\alpha)\subseteq\Per(\eta,\Gamma_i,\alpha)\subseteq  \Per(\sigma^g{\eta},\Gamma_i,\alpha)$.
 Therefore, $\alpha=\sigma^g{\eta}(\gamma w)=\sigma^g{\eta}(\gamma \gamma_i w)=\sigma^g{\eta}(\gamma_{i-1} w)$, it means that $w\in\Per(\sigma^g{\eta},\Gamma_{i-1},\alpha)$.
 Using the hypothesis of induction we conclude that $g\in \Gamma_{i-1}$.

If $g\in\Gamma_{i-1}\setminus \Gamma_i$, there exists $\gamma\in (\Gamma_{i-1}\cap D_i)\setminus\{1_G\}$ and $\gamma_i\in\Gamma_i$ such that $g^{-1}=\gamma \gamma_i$.
If  $h\in J(i-1)\subseteq \Per(\eta,\Gamma_i,\alpha_i)\subseteq \Per(\sigma^g\eta,\Gamma_i,\alpha_i)$, then 
 $\sigma^g{\eta}(\gamma' h)={\eta}(g^{-1}\gamma' h)=\alpha_i$ for every $\gamma'\in\Gamma_i$.
In particular, we have ${\eta}(\gamma h)=\alpha_i$. 
But this gives us a contradiction with the fact that $\gamma h\in J(i)\subseteq \Per({\eta},\Gamma_{i+1},\alpha_{i+1})$ and $\alpha_i\neq \alpha_{i+1}$.
This concludes the proof.
\end{proof}  

Consider the (right) $G$-odometer given by 
\begin{align*}
    \overleftarrow{G}=\{(\Gamma_n g_n)\in\prod_{n\in\mathbb{N}} \Gamma_n\backslash G : \Gamma_{n}g_{n+1}=\Gamma_ng_n, \mbox{ for each }n\in\mathbb{N}\},
\end{align*}
where $\Gamma_n\backslash G$ is the set of right cosets of $\Gamma_n$ in $G$, $n\in\mathbb{N}$. 
The left action $\varphi$ of $G$ on  $\overline{G}$ is given by $\varphi^g((\Gamma_n g_n)_{n\in\mathbb{N}})=(\Gamma_n g_n g^{-1})_{n\in\mathbb{N}} $ for each $g\in G$ and $(\Gamma_n g_n)_{n\in\mathbb{N}}\in\overleftarrow{G}$.
Recall that the dynamical system $(\overleftarrow{G}, \varphi, G)$ is uniquely ergodic. 
We denote this invariant measure by $\mu$.
The previous proposition and \cite[Proposition 7]{CoPe08} guarantee that $(\overleftarrow{G},\varphi, G)$ is the maximal equicontinuous factor of $\overline{O_\sigma(\eta)}$ with factor map given by $\pi:\overline{O_\sigma(\eta)}\to \overleftarrow{G}$, $\pi(x)=(\Gamma_n g_n)_{n\in\mathbb{N}}$ if and only if $x\in \sigma^{g_n^{-1}}C_n$ for every $n\in\mathbb{N}$, where  
    $$
C_n=\{ x\in \overline{O_\sigma(\eta)}: \Per(x,\Gamma_n, \alpha)=\Per(\eta, \Gamma_n, \alpha), \mbox{ for every } \alpha\in \Sigma\}.
$$
and $\{\sigma^{v^{-1}}C_n: v\in D_nR\}$ is a clopen partition of $\overline{O_\sigma(\eta)}$ (see \cite{CoPe08}).

Now, we prove that $\overline{O_\sigma(\eta)}$ has exactly $m$ ergodic measures. 
For that, we use the procedure used in \cite{CeCoGo23}.

For $n\geq 1$, define $\eta_n\in \Sigma^G$ as
$$
\eta_n(\gamma D_nR)=\eta(D_nR), \mbox{ for every } \gamma\in \Gamma_n,
$$
that is, $\eta_n(\gamma g)=\eta(g)$ for every  $\gamma \in \Gamma_n$ and $g\in D_nR$.
Thus we have $\sigma^{\gamma}(\eta_n)=\eta_n$, for every $\gamma\in \Gamma_n$, which implies that
$
O_{\sigma}(\eta_n)=\{\sigma^{u^{-1}}(\eta_n): u\in D_nR\}. 
$ 
From this, we can define the following periodic measures on $\Sigma^G$,
$$
\mu_n=\frac{1}{|D_nR|}\sum_{u\in D_nR}\delta_{\sigma^{u^{-1}}(\eta_n)}.  
$$

Let $i\in \{1,\dots, m\}$ and $[i]$ be the subset of all $x\in \Sigma^G$ such that $x(1_G)=i$.

For every $n$ such that $n+1\equiv i \;(\bmod\; m)$ we have
$$
\mu_n([i])=\frac{|J(n)R|+|\Per(\eta,\Gamma_n,i)\cap D_nR|}{|D_nR|}\geq \frac{|J(n)R|}{|D_nR|}=1-\dfrac{|D_n\cap \Per(\eta,\Gamma_n)|}{|D_n|}, 
$$
$$
\mu_n([j])=\frac{|\Per(\eta, \Gamma_n,j)\cap D_nR|}{|D_nR|}\leq\dfrac{|D_n\cap \Per(\eta,\Gamma_n)|}{|D_n|}, \mbox{ for } j\in \{1,\dots, m\}\setminus\{i\},
$$
\begin{align*}
    \mu_n([\beta])=\dfrac{|\Per(\eta,\Gamma_n,\beta)\cap D_nR|}{|D_nR|}=\dfrac{1}{|D_1|}\left(1-\dfrac{1}{|R|}\right)\leq d.
\end{align*}
Notice that $d_n=\frac{|D_nR\cap \Per(\eta, \Gamma_n)|}{|D_nR|}=\frac{|D_n\cap \Per(\eta, \Gamma_n)|}{|D_n|}$ define an increasing sequence converging to some $d\in [0,1]$.
The same applies for the sequence $(d_{n,j})_{n\in\mathbb{N}}$, where  $d_{n,j}=\frac{|D_nR\cap \Per(\eta, \Gamma_n,j)|}{|D_nR|}$.
This implies that every accumulation point $\mu$ of  $(\mu_{i+sm-1})_{ s\in\mathbb{N}}$ satisfies 
\begin{align*}
\mu([i])&=1-d+\lim_{n\to \infty}\frac{|\Per(\eta, \Gamma_n,i)\cap D_n|}{|D_n|}\geq 1-d,\\
\mu([j])&=\lim_{n\to \infty}\frac{|\Per(\eta, \Gamma_n,j)\cap D_nR|}{|D_nR|}=t_j\leq d \mbox{ for every } j\in \{1,\dots, m\}\setminus\{i\}.
\end{align*}

The next proposition implies that $\eta$ is irregular and is proved in the exact manner as \cite[Proposition 4.5]{CeCoGo23} thanks to Lemma \ref{Lemma:R}. 
After this, we can guarantee that $\overline{O_\sigma(\eta)}$ has exactly $m$ ergodic measures.
\begin{proposition}[{\cite[Proposition 4.5]{CeCoGo23}}]\label{regular}
For the Toeplitz array $\eta$ defined above we have   
$$
1-d_{n+1}=\left(1-\frac{1}{|D_{1}|}\right)\prod_{j=1}^n \left(1-\frac{|D_j|}{|D_{j+1}|}\right), \mbox{ for every } n\in \mathbb{N}.
$$
This implies that  $d<1-d$.
\end{proposition}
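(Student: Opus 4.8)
The plan is to translate both assertions into a count of the ``undecided'' cells $J(n)\subseteq D_n$ and then run the recursion supplied by Lemma~\ref{Lemma:J's}.

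First I would establish that $1-d_n=|J(n)|/|D_n|$. By Lemma~\ref{Lemma:R} one may perform the count inside $D_n\subseteq G'$ rather than inside $D_nR$ (the symbol $\beta$, which appears only on the cosets $\Gamma_1(R\setminus\{1_G\})\subseteq G\setminus G'$, contributes nothing inside $D_n$). By Lemma~\ref{Lemma:Per}, a cell of $D_n$ lies in $\Per(\eta,\Gamma_n)$ exactly when its value was fixed at some step $\le n$ of the construction, i.e. when it lies in some $\Gamma_{i+1}J(i)$ with $i\le n-1$; by the very definition of $J(n)$ the complement of those cells in $D_n$ is precisely $J(n)$. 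Hence $|D_n\cap\Per(\eta,\Gamma_n)|=|D_n|-|J(n)|$, so $1-d_n=|J(n)|/|D_n|$; in particular $1-d_1=(|D_1|-1)/|D_1|=1-\tfrac1{|D_1|}$ because $J(1)=D_1\setminus\{1_G\}$.

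Next I would exploit Lemma~\ref{Lemma:J's}: $J(n)=\bigsqcup_{\gamma\in(D_n\cap\Gamma_{n-1})\setminus\{1_G\}}\gamma J(n-1)$, a disjoint union since $D_n=\bigsqcup_{\gamma\in D_n\cap\Gamma_{n-1}}\gamma D_{n-1}$ is a tiling and $J(n-1)\subseteq D_{n-1}$. As $|D_n\cap\Gamma_{n-1}|=[\Gamma_{n-1}:\Gamma_n]=|D_n|/|D_{n-1}|$, this gives $|J(n)|=\bigl(|D_n|/|D_{n-1}|-1\bigr)|J(n-1)|$, and therefore
\[
\frac{1-d_{n+1}}{1-d_n}=\frac{|J(n+1)|}{|J(n)|}\cdot\frac{|D_n|}{|D_{n+1}|}=\Bigl(\frac{|D_{n+1}|}{|D_n|}-1\Bigr)\frac{|D_n|}{|D_{n+1}|}=1-\frac{|D_n|}{|D_{n+1}|}.
\]
Multiplying these identities for $j=1,\dots,n$ and substituting $1-d_1=1-1/|D_1|$ yields the displayed product formula. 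For the last assertion, the densities $1-d_n$ decrease and stay positive, so $1-d=\lim_n(1-d_n)$ exists, and passing to the limit gives $1-d=(1-1/|D_1|)\prod_{j\ge1}\bigl(1-|D_j|/|D_{j+1}|\bigr)$. Condition~\eqref{Gammacondition} reads $[\Gamma_j:\Gamma_{j+1}]>(1-2^{-(1/2)^{j+1}})^{-1}$, equivalently (using $|D_{j+1}|=[\Gamma_j:\Gamma_{j+1}]\,|D_j|$) $1-|D_j|/|D_{j+1}|>2^{-(1/2)^{j+1}}$, so $\prod_{j\ge1}\bigl(1-|D_j|/|D_{j+1}|\bigr)>2^{-\sum_{j\ge1}(1/2)^{j+1}}=2^{-1/2}$; combined with $|D_1|\ge4$ (which holds because $p_j^1>3$, and which one can anyway arrange by dropping finitely many terms of the sequence) this gives $1-d>\tfrac34\cdot2^{-1/2}>\tfrac12$, hence $d<\tfrac12<1-d$.

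The step I expect to require the most care — and the only one that genuinely departs from the normal-subgroup treatment of \cite{CeCoGo23} — is the first: identifying the period set $\Per(\eta,\Gamma_n)\subseteq G$, with $\Gamma_n$ possibly non-normal in $G$, with the purely combinatorial set $D_n\setminus J(n)\subseteq G'$, and verifying that the extra symbol $\beta$ and the coset transversal $R$ wash out of the density. This is exactly the purpose of Lemma~\ref{Lemma:R} (together with Lemma~\ref{Lemma:Per}); once those are in hand, the recursion and the final estimate proceed essentially verbatim as in \cite{CeCoGo23}.
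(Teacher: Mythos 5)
Your argument is correct and is essentially the proof the paper has in mind: the paper simply defers to \cite[Proposition 4.5]{CeCoGo23}, noting that Lemma \ref{Lemma:R} (together with Lemma \ref{Lemma:Per}) is what lets that counting argument survive the transversal $R$ and the extra symbol $\beta$, and your reduction to $1-d_n=|J(n)|/|D_n|$ followed by the recursion $|J(n+1)|=\bigl(|D_{n+1}|/|D_n|-1\bigr)|J(n)|$ from Lemma \ref{Lemma:J's} is exactly that argument. The concluding estimate via condition \eqref{Gammacondition} and $|D_1|\geq 4$ is likewise the intended one.
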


\begin{remark}\label{different-measures}{\rm The previous proposition implies that $\overline{O_\sigma(\eta)}$ has at least $m$ different invariant measures $\nu_1,\dots,\nu_m$ over $\Sigma^G$, where $\nu_j$ is a limit point of the sequence $(\mu_{j+sm-1})_{s\in\mathbb{N}}$, for each $j\in\{1,\dots, m\}$.
Furthermore, if $\mu$ is an accumulation point of $(\mu_n)_{n\in\mathbb{N}}$ then there exists $i\in \{1,\dots, m\}$ such that $\mu([j])=\nu_i([j])$ for every $j\in\{1,\dots, m,\beta\}$.
Taking subsequences of $(\Gamma_n)_{n\in\mathbb{N}}$, we can assume that $(\mu_{j+sm-1})_{ s\in\mathbb{N}}$ converges to $\nu_j$, for every $1\leq j\leq m$. 
In other words, we can assume that $\nu_1,\dots, \nu_m$ are the unique limit points of $(\mu_n)_{n\in\mathbb{N}}$.

Note that the limit points $\nu_1,\dots, \nu_m$ are supported on $\overline{O_\sigma(\eta)}$.
Indeed,  let  $U\subseteq \Sigma^G$ be a cylinder given by fixing the coordinates of its points in a finite set $F\subseteq G$.
Observe that
$$
\mu_n(U)=\frac{|\{ v\in \partial_F D_nR: \sigma^{v^{-1}}\eta_n\in U\} |}{|D_nR|}+\frac{|\{ v\in D_nR\setminus \partial_F D_nR: \sigma^{v^{-1}}\eta_n\in U\} |}{|D_nR|},
$$
where $\partial_F D_nR=\{v\in D_nR: vF\not\subseteq D_nR\}$.
Since $(D_nR)_{n\in\mathbb{N}}$ is a left F\o lner sequence, the first term of the sum goes to zero as $n$ goes to $\infty$.
This implies that $\nu_i(U)>0$ for some $1\leq  i \leq m$, only if $U$ intersects the orbit of $\eta$.
Thus, the measures $\nu_1,\dots, \nu_m$ are supported on $\overline{O_{\sigma}(\eta)}$.     

}
\end{remark}
At this moment, we have that $\nu_1,\dots,\nu_m$ are different invariant measures on $\overline{O_\sigma(\eta)}$.
In the following we prove that they are the only ergodic measures for the system.

For this construction, Lemma \ref{auxiliar} is also true, which implies the following corollary, the proof of which is similar to Corollary \ref{partition}.

\begin{corollary}\label{partition2}
For every $i\geq 1$ and $\gamma\in \Gamma_i$, there exists $\alpha\in \{1,\dots,m\}$ such that 
$$
\eta(g)=\alpha \mbox{ for every } g\in \gamma J(i)R.
$$
\end{corollary}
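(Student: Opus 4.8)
The plan is to repeat the proof of Corollary \ref{partition} almost verbatim, the only new ingredient being the version of Lemma \ref{auxiliar} adapted to the present construction (which the paragraph preceding the statement already asserts remains valid). Fix $i \geq 1$ and $\gamma \in \Gamma_i$. First I would apply this analogue of Lemma \ref{auxiliar} to obtain $l \geq i$ with $\gamma J(i) \subseteq \Gamma_{l+1} J(l)$, and then right-multiply by $R$ to get $\gamma J(i) R \subseteq \Gamma_{l+1} J(l) R$. By Step $l+1$ of the definition of $\eta$, we have $\eta(g) = \alpha_{l+1}$ for every $g \in \Gamma_{l+1} J(l) R$; since $\alpha_{l+1} \in \{1, \dots, m\}$ by the definition of the sequence $(\alpha_n)_{n\in\mathbb{N}}$, taking $\alpha := \alpha_{l+1}$ gives $\eta(g) = \alpha$ for all $g \in \gamma J(i) R$, as claimed.

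The one point worth verifying is that the proof of Lemma \ref{auxiliar} survives the passage from the setting of Section \ref{Abelianfinite}, where the $\Gamma_i$ are normal in $G$, to the virtually abelian setting here. Two small facts make this routine. First, the identity $J(i) = D_i \setminus \Per(\eta, \Gamma_i)$ still holds: by Lemma \ref{Lemma:Per} one has $\Per(\eta, \Gamma_i) \cap D_i = \left(\bigcup_{k=1}^{i-1} \Gamma_{k+1} J(k) R\right) \cap D_i = \bigcup_{k=1}^{i-1} \Gamma_{k+1} J(k)$, because $D_i \subseteq G'$ and $\Gamma_{k+1} J(k) R$ meets $G'$ exactly in $\Gamma_{k+1} J(k)$. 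Second, the conjugation manipulation in Lemma \ref{auxiliar} (rewriting $\gamma_{l+1}' s$ as $s \gamma_{l+1}''$ with $\gamma_{l+1}'' \in \Gamma_{l+1}$) is legitimate here because the relevant elements $s$ lie in $D_l \subseteq G'$ and $G' \cong \mathbb{Z}^r$ is abelian, so conjugation by $s$ fixes the subgroup $\Gamma_{l+1} \leq G'$ setwise. With these observations the minimality-of-$l$ argument from Lemma \ref{auxiliar} runs word for word, giving the required $l$.

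I do not expect any genuine obstacle: the whole argument is bookkeeping, and the only thing one must be careful about is keeping the extra right factor $R$ out of the way. It never interacts with the left cosets $\Gamma_n$ nor with the subsets $D_n, J(n) \subseteq G'$, so every inclusion used above is stable under right-multiplication by $R$, and the value read off from $\eta$ on $\Gamma_{l+1} J(l) R$ is unambiguous precisely because $\gamma J(i)$ is trapped inside a single block $\Gamma_{l+1} J(l)$.
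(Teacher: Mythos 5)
Your proposal is correct and follows the same route as the paper: the paper likewise asserts that Lemma \ref{auxiliar} remains valid for this construction (so $\gamma J(i)\subseteq \Gamma_{l+1}J(l)$ for some $l\geq i$) and then reads off $\eta\equiv\alpha_{l+1}$ on $\Gamma_{l+1}J(l)R$ from the definition of $\eta$. Your extra verification that the proof of Lemma \ref{auxiliar} survives — via $J(i)=D_i\setminus\Per(\eta,\Gamma_i)$ and commutativity of $G'$ replacing normality — is sound and in fact supplies detail the paper leaves implicit.
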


Recall that $\{\sigma^{v^{-1}}C_n: v\in D_nR\}$ is a clopen partition of $\overline{O_\sigma(\eta)}$, where  
    $$
C_n=\{ x\in \overline{O_\sigma(\eta)}: \Per(x,\Gamma_n, \alpha)=\Per(\eta, \Gamma_n, \alpha), \mbox{ for every } \alpha\in \Sigma\}.
$$
  
For every $1\leq i\leq m$, let 
  $C_{n,i}=\{x\in C_n: x(g)=i \mbox{ for every } g\in J(n)R\}.$
Corollary \ref{partition2} implies that $\{C_{n,i}: 1\leq i\leq m\}$ is a covering of $C_n$.
Therefore, $$\cP_n=\{\sigma^{v^{-1}}C_{n,i}: 1\leq i\leq m, v\in D_nR\}$$ is a clopen partition of $\overline{O_\sigma(\eta)}$.

  \begin{lemma}\label{rel-partition}
 For every $n\geq 1$ and $1\leq j\leq m$, we have
  \begin{enumerate}
  \item $C_{n+1}\subseteq C_{n,\alpha_{n+1}}$, and
  \item $\sigma^{\gamma^{-1}}C_{n+1,j}\subseteq C_{n,j}$, for every $\gamma\in (\Gamma_{n}\cap D_{n+1})\setminus \{1_G\}$. 
  \end{enumerate}
  \end{lemma}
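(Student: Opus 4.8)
The statement asserts a nesting relation between the clopen pieces $C_{n,j}$ at consecutive levels. I would prove the two items separately, both by unwinding the definitions of $C_n$, $C_{n,i}$, and the sets $\Per(\eta,\Gamma_n,\alpha)$, together with the structural facts already established: Lemma~\ref{Lemma:Per}, Lemma~\ref{Lemma:R}, Lemma~\ref{Lemma:J's}, and Corollary~\ref{partition2}.

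\textbf{Item (1).} Let $x\in C_{n+1}$. By definition $\Per(x,\Gamma_{n+1},\alpha)=\Per(\eta,\Gamma_{n+1},\alpha)$ for every $\alpha\in\Sigma$. First I would check $x\in C_n$, i.e.\ $\Per(x,\Gamma_n,\alpha)=\Per(\eta,\Gamma_n,\alpha)$: since $\Gamma_{n+1}\leq\Gamma_n$ one has, for any $y$, that $g\in\Per(y,\Gamma_n,\alpha)$ forces $g\in\Per(y,\Gamma_{n+1},\alpha)$, and conversely $\Per(\eta,\Gamma_n)=\bigcup_{i=1}^{n-1}\Gamma_{i+1}J(i)R$ while $\Per(\eta,\Gamma_{n+1})=\bigcup_{i=1}^{n}\Gamma_{i+1}J(i)R$; intersecting the membership condition for $x$ on each $\Gamma_{i+1}J(i)R$ with $i\leq n-1$ and using that $\eta$ is constant $=\alpha_{i+1}$ there (Corollary~\ref{partition2}) recovers exactly $\Per(\eta,\Gamma_n,\alpha)$. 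So $x\in C_n$. Then I must identify which $C_{n,i}$ it lands in, i.e.\ show $x(g)=\alpha_{n+1}$ for all $g\in J(n)R$. Now $J(n)R\subseteq D_{n+1}R$ and, by the construction of $\eta$ at Step $n+1$, $J(n)R\subseteq\Per(\eta,\Gamma_{n+1},\alpha_{n+1})$; since $x\in C_{n+1}$, $J(n)R\subseteq\Per(x,\Gamma_{n+1},\alpha_{n+1})$, which in particular (taking $\gamma=1_G\in\Gamma_{n+1}$) gives $x(g)=\alpha_{n+1}$ for every $g\in J(n)R$. Hence $x\in C_{n,\alpha_{n+1}}$.

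\textbf{Item (2).} Fix $\gamma\in(\Gamma_n\cap D_{n+1})\setminus\{1_G\}$ and $x\in C_{n+1,j}$; I want $\sigma^{\gamma^{-1}}x\in C_{n,j}$. Write $y=\sigma^{\gamma^{-1}}x$. First, $y\in C_n$: since $\gamma\in\Gamma_n$ and $\Gamma_n$ acts by permuting the relevant data, Lemma~\ref{no-normal} gives $\Per(\sigma^{\gamma^{-1}}x,\Gamma_n,\alpha)=\gamma^{-1}\Per(x,\gamma\Gamma_n\gamma^{-1},\alpha)=\gamma^{-1}\Per(x,\Gamma_n,\alpha)$ because $\Gamma_n\trianglelefteq G$ is actually here a subgroup of the abelian $G'$... more carefully, $\Gamma_n$ need not be normal in $G$, so I would instead argue directly: $x\in C_{n+1}\subseteq C_n$ by item (1), so $\Per(x,\Gamma_n,\alpha)=\Per(\eta,\Gamma_n,\alpha)=\bigcup_{i\leq n-1}\Gamma_{i+1}J(i)R$, and this set is left-invariant under $\Gamma_n$ (indeed under each $\Gamma_{i+1}\supseteq$... wait, $\Gamma_n\subseteq\Gamma_{i+1}$ for $i+1\le n$, so $\Gamma_n\cdot\Gamma_{i+1}J(i)R=\Gamma_{i+1}J(i)R$), hence $\Per(y,\Gamma_n,\alpha)=\gamma^{-1}\Per(x,\Gamma_n,\alpha)=\Per(\eta,\Gamma_n,\alpha)$, giving $y\in C_n$. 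Next, $y(g)=j$ for all $g\in J(n)R$: we have $g\in J(n)R\Rightarrow \gamma g\in\gamma J(n)R$; by Lemma~\ref{Lemma:J's}, $\gamma J(n)\subseteq J(n+1)$ when $\gamma\in(\Gamma_n\cap D_{n+1})\setminus\{1_G\}$, so $\gamma J(n)R\subseteq J(n+1)R$, and since $x\in C_{n+1,j}$ we get $x(\gamma g)=j$, i.e.\ $y(g)=(\sigma^{\gamma^{-1}}x)(g)=x(\gamma g)=j$. Therefore $y\in C_{n,j}$.

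\textbf{Main obstacle.} The delicate point is the non-normality of $\Gamma_n$ in $G$: one cannot blindly invoke Lemma~\ref{no-normal} with conjugation landing back in $\Gamma_n$. The way around it, as sketched above, is to avoid conjugation entirely and exploit that the concrete period set $\Per(\eta,\Gamma_n)=\bigcup_{i=1}^{n-1}\Gamma_{i+1}J(i)R$ is genuinely \emph{left}-$\Gamma_n$-invariant (because $\Gamma_n\subseteq\Gamma_{i+1}$ for each relevant $i$), so translating by $\gamma^{-1}\in\Gamma_n$ fixes it setwise and the identity $\Per(x,\Gamma_n,\alpha)=\Per(\eta,\Gamma_n,\alpha)$ survives the shift. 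I would state this left-invariance as a one-line observation before the proof and then the two inclusions follow mechanically.
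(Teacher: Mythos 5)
Your proof is correct and follows essentially the same route as the paper: item (1) via $C_{n+1}\subseteq C_n$ together with $J(n)R\subseteq \Per(\eta,\Gamma_{n+1},\alpha_{n+1})$, and item (2) via $\Gamma_n$-invariance of $C_n$ plus the inclusion $\gamma J(n)\subseteq J(n+1)$ from Lemma~\ref{Lemma:J's} (the paper cites Corollary~\ref{partition2} for the same fact). Your worry about non-normality of $\Gamma_n$ in $G$ is actually moot here—Lemma~\ref{no-normal} is applied with $g=\gamma^{-1}$ for $\gamma\in\Gamma_n$, and conjugation by an element of $\Gamma_n$ always fixes $\Gamma_n$—but your workaround via left-$\Gamma_n$-invariance of the period sets is equally valid.
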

  \begin{proof}
  Since $\Per(\eta, \Gamma_{n})\subseteq \Per(\eta, \Gamma_{n+1})$, we have $C_{n+1}\subseteq C_n$. Furthermore, $D_nR\subseteq \Per(\eta, \Gamma_{n+1})$, which implies that
  $x(D_nR)=\eta(D_nR)$, for every $x\in C_{n+1}$.
  In particular, $x(g)=\eta(g)=\alpha_{n+1}$, for every $g\in J(n)R$.
  From this we get that $C_{n+1}\subseteq C_{n,\alpha_{n+1}}$.  
  
  \medskip
  
Using that $C_{n+1,j}\subseteq C_n$, we get that for every $\gamma\in \Gamma_{n}$,  $\sigma^{\gamma^{-1}}C_{n+1,j}\subseteq C_n$. On the other hand, if $\gamma\in (\Gamma_n\cap D_{n+1})\setminus \{1_G\}$ and $y\in C_{n+1,j}$, then Corollary \ref{partition2} implies that $\sigma^{\gamma^{-1}}(y)(g)=y(\gamma g)=j$, for every $g\in J(n)R$. This shows that $\sigma^{\gamma^{-1}}C_{n+1,j}\subseteq C_{n,j}$. 
  \end{proof}

 Let $\triangle$ be the convex set generated by the set of vectors $\{\vec{t}_1, \dots, \vec{t}_m\}\subseteq  \mathbb{R}^{m+1}$, where
 \begin{align*}
     \vec{t}_i=(t_1,\dots, t_{i-1},1-d+t_i,t_{i+1},\dots, t_m, t_\beta),
 \end{align*}
 with
 \begin{align*}
     t_\alpha=\lim_{n\to\infty}\dfrac{|D_nR\cap\Per(\eta,\Gamma_n,\alpha)|}{|D_nR|}, \mbox{ for each }\alpha\in \Sigma.
 \end{align*}
 
 That is,
 $$
 \triangle=\left\{\sum_{i=1}^m\lambda_i\vec{t}_i: \sum_{i=1}^m\lambda_i=1, \lambda_1,\dots, \lambda_m\geq 0\right\}.
 $$
 Since the vectors $\vec{t}_1,\dots, \vec{t}_m$ are linearly independent, the convex set $\triangle$ is a simplex.

\begin{proposition}\label{at-least}
The map $p:M_G(\overline{O_\sigma(\eta)})\to \triangle$, given by $p(\mu)=(\mu([1]),\dots,\mu([m]),\mu([\beta]))$, for $\mu \in M_G(\overline{O_\sigma(\eta)})$, is an affine surjective map such that $p(\nu_i)=\vec{t_i}$ for each $1\leq i\leq m$. 
\end{proposition}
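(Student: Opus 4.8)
The goal is to show that $p$ is affine, surjective, and that $p(\nu_i) = \vec{t}_i$. Affinity is immediate: each coordinate $\mu \mapsto \mu([j])$ is linear in $\mu$, so $p$ sends convex combinations to convex combinations. The identity $p(\nu_i) = \vec{t}_i$ was essentially recorded in Remark \ref{different-measures} together with the displayed computations preceding Proposition \ref{regular}: $\nu_i$ is the limit of $(\mu_{i+sm-1})_{s\in\mathbb{N}}$, and passing to the limit in the formulas for $\mu_n([i])$, $\mu_n([j])$ and $\mu_n([\beta])$ gives exactly $\nu_i([j]) = t_j$ for $j \neq i$, $\nu_i([i]) = 1 - d + t_i$, and $\nu_i([\beta]) = t_\beta$; i.e. $p(\nu_i) = \vec{t}_i$. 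In particular, since the $\vec{t}_i$ are affinely independent (the simplex $\triangle$ is nondegenerate), the image of $p$ contains all of $\triangle$, giving surjectivity. So the only real content is that $p$ actually lands \emph{inside} $\triangle$, equivalently that every invariant measure $\mu$ of $\overline{O_\sigma(\eta)}$ has $(\mu([1]),\dots,\mu([m]),\mu([\beta])) \in \triangle$.

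For that containment I would argue via the refining clopen partitions $\cP_n = \{\sigma^{v^{-1}}C_{n,i} : 1 \le i \le m,\ v \in D_nR\}$ and Lemma \ref{rel-partition}. Fix an invariant (indeed it suffices to treat ergodic) $\mu$. For each $n$ and each $i$, set $\lambda_i^{(n)} = \mu\!\left(\bigcup_{v \in D_nR} \sigma^{v^{-1}} C_{n,i}\right)$, the total mass of the "type-$i$ at level $n$" atoms; these are nonnegative and sum to $1$. The key point is that the cylinder $[j]$ decomposes along $\cP_n$, and the fraction of each $\sigma^{v^{-1}}C_{n,i}$ lying in $[j]$ is governed by $\Per(\eta,\Gamma_n,\cdot)$ exactly as in the periodic-measure computation: on $C_{n,i}$ one has $x(g) = i$ for $g \in J(n)R$ and $x(g) = \eta(g)$ for $g \in \Per(\eta,\Gamma_n) \cap D_nR$, so by invariance $\mu([j] \cap \text{(type-$i$ level-$n$ atoms)})$ equals $\lambda_i^{(n)}$ times a coefficient which converges, as $n \to \infty$ along the chosen subsequence, to the $j$-th coordinate of $\vec{t}_i$ (using that $d_{n,j} \to t_j$ and $d_n \to d$, and that the boundary terms vanish by the left-F\o lner property of $(D_nR)_n$, exactly as in Remark \ref{different-measures}). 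Summing over $i$ gives $\mu([j]) = \lim_n \sum_{i=1}^m \lambda_i^{(n)} (\vec{t}_i)_j$.

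Finally I would extract a convergent subsequence of $(\lambda_1^{(n)},\dots,\lambda_m^{(n)})$ in the simplex $\{\lambda_i \ge 0, \sum \lambda_i = 1\}$, with limit $(\lambda_1,\dots,\lambda_m)$; the partitions $\cP_n$ being nested in the sense of Lemma \ref{rel-partition} (so that $\lambda_i^{(n)}$ is, up to the vanishing corrections coming from atoms that "change type", non-increasingly tracked) ensures the limits of the coefficients and of the $\lambda_i^{(n)}$ can be taken simultaneously, yielding $p(\mu) = \sum_{i=1}^m \lambda_i \vec{t}_i \in \triangle$. Thus $p$ maps into $\triangle$; combined with $p(\nu_i) = \vec{t}_i$ and affinity, $p$ is an affine surjection onto $\triangle$.

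**Main obstacle.** The delicate step is controlling the coefficient of $\lambda_i^{(n)}$ in $\mu([j])$ and showing it converges to $(\vec{t}_i)_j$ \emph{uniformly enough} to survive summation over $i$ and the passage to the limit in $n$: one must handle the atoms whose "type at level $n$" is not inherited cleanly from level $n+1$ (the $\gamma = 1_G$ case excluded in Lemma \ref{rel-partition}(2)) and show their total mass is negligible in the limit, which is where the left-F\o lner property of $(D_nR)_{n\in\mathbb{N}}$ and the estimate $d < 1 - d$ from Proposition \ref{regular} enter. Once that bookkeeping is done, the rest is linear algebra in $\mathbb{R}^{m+1}$.
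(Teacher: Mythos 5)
Your proposal is correct and follows essentially the same route as the paper: decompose $[j]\cap\overline{O_\sigma(\eta)}$ exactly into atoms of the clopen partition $\mathcal{P}_n$, write $\mu([j])$ as a combination of the normalized weights $\lambda_i^{(n)}=|D_nR|\,\mu(C_{n,i})$ with coefficients converging to the coordinates of $\vec{t}_i$, and pass to a subsequential limit of $(\lambda_1^{(n)},\dots,\lambda_m^{(n)})$ in the simplex; affinity plus $p(\nu_i)=\vec{t}_i$ then gives surjectivity. The one comment is that the ``main obstacle'' you flag is not actually present: since $D_nR$ splits exactly as $J(n)R$ together with $\Per(\eta,\Gamma_n)\cap D_nR$, the level-$n$ decomposition of $[j]$ is exact (no boundary or type-change correction terms), so neither the F\o lner property nor $d<1-d$ is needed at this step and the summation over the finitely many $i$ poses no uniformity issue.
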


\begin{proof}
 For every $n\in\mathbb{N}$ and $\alpha\in \Sigma$, set $a_{n,\alpha} =|D_nR\cap \Per(\eta, \Gamma_n, \alpha)|$.
 
Let $C_{0,\alpha}=[\alpha]\cap \overline{O_\sigma(\eta)}$.
Note that for every $\alpha\in \{1,\dots, m\}$
    \begin{align*}
        C_{0,\alpha}=\bigcup_{g\in J(n)R}\sigma^{g^{-1}} C_{n,\alpha}\cup \bigcup_{g\in \Per(\eta,\Gamma_n,\alpha)\cap D_nR}\sigma^{g^{-1}}C_n.
    \end{align*}
    Therefore, for every invariant probability measure $\mu\in M_G(\overline{O_\sigma(\eta)})$ we have
    \begin{eqnarray*}
        \mu(C_{0,\alpha}) & = & |J(n)R|\mu(C_{n,\alpha})+a_{n,\alpha} \mu(C_n)\\
          &=& (|J(n)R|+a_{n,\alpha})\mu(C_{n,\alpha})+a_{n,\alpha}\sum_{j\in\{1,\dots,m\}\setminus\{\alpha\}}\mu(C_{n,j})\\
           &=& \frac{|J(n)R|+a_{n,\alpha}}{|D_nR|}\mu(C_{n,\alpha})|D_nR|+\frac{a_{n,\alpha}}{|D_nR|}\sum_{\substack{j\in\{1,\dots,m\}\setminus\{\alpha\}}}\mu(C_{n,j})|D_nR|.\\
    \end{eqnarray*}
    Taking a subsequence $(n_k)_{k\in\mathbb{N}}$ so that $\lim_{k\to \infty}\mu(C_{n_k,j})|D_{n_k}R|=\lambda_j\in [0,1]$, $j\in\{1,\dots,m\}$, we get 
    $$
    \mu(C_{0,\alpha})=\sum_{j=1}^m\vec{t}_j(\alpha)\lambda_j, \mbox{ with }\sum_{j=1}^m\lambda_j=1.
    $$
    On the other hand,
    \begin{align*}
        C_{0,\beta}=\bigcup_{g\in \Per(\eta,\Gamma_n,\beta)\cap D_nR}\sigma^{g^{-1}}C_n.
    \end{align*}
    Thus,
    \begin{align*}
        \mu(C_{0,\beta})=\dfrac{1}{|D_1|}\left(1-\dfrac{1}{|R|}\right)=\sum_{j=1}^m \vec{t_j}(\beta)\lambda_j.
    \end{align*}
    This implies that $p(\mu)=(\mu(C_{0,1}),\dots, \mu(C_{0,m}),\mu(C_{0,\beta}))$ belongs to $\triangle$. 
    Moreover, we have that $p(\nu_i)=\vec{t_i}$ for each $i\in\{1,\dots, m\}$.
    That $p$ is affine and surjective follows by its definition. 
\end{proof}

\begin{lemma}\label{partitions-determine-measures}
Let $p$ be  the map introduced in Proposition \ref{at-least}. 
If $\mu,\nu\in M_G(\overline{O_\sigma(\eta)})$ satisfy $p(\mu)=p(\nu)$, then $\mu|_{\mathcal{P}_n}= \nu|_{\mathcal{P}_n}$, for every $n\in \mathbb{N}$.
\end{lemma}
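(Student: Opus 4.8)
The plan is to write $\mu(C_{n,\alpha})$, for any $\mu\in M_G(\overline{O_\sigma(\eta)})$, $n\geq 1$ and $1\leq\alpha\leq m$, as a function of the single coordinate $\mu([\alpha])$ of $p(\mu)$ together with combinatorial quantities attached to $\eta$ that do not depend on the measure; invariance of $\mu$ and $\nu$ then upgrades the resulting equality to all of $\mathcal{P}_n$.

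First I would reduce to the sets $C_{n,\alpha}$: every atom of $\mathcal{P}_n$ has the form $\sigma^{v^{-1}}C_{n,\alpha}$ with $v\in D_nR$ and $1\leq\alpha\leq m$, and $G$-invariance gives $\mu(\sigma^{v^{-1}}C_{n,\alpha})=\mu(C_{n,\alpha})$ (and likewise for $\nu$), so it is enough to prove $\mu(C_{n,\alpha})=\nu(C_{n,\alpha})$ for all $n\geq 1$, $1\leq\alpha\leq m$. Next I would record two measure-independent inputs. Since $\{\sigma^{v^{-1}}C_n:v\in D_nR\}$ is a clopen partition of $\overline{O_\sigma(\eta)}$, invariance forces $|D_nR|\,\mu(C_n)=1$, hence $\mu(C_n)=|D_nR|^{-1}$. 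And from the disjoint decomposition used in the proof of Proposition \ref{at-least},
\begin{align*}
C_{0,\alpha}=\bigsqcup_{g\in J(n)R}\sigma^{g^{-1}}C_{n,\alpha}\ \sqcup\ \bigsqcup_{g\in\Per(\eta,\Gamma_n,\alpha)\cap D_nR}\sigma^{g^{-1}}C_n,
\end{align*}
together with invariance and $C_{0,\alpha}=[\alpha]$, we obtain for every $n\in\mathbb{N}$ and $1\leq\alpha\leq m$ the identity $\mu([\alpha])=|J(n)R|\,\mu(C_{n,\alpha})+a_{n,\alpha}\,\mu(C_n)$, where $a_{n,\alpha}=|\Per(\eta,\Gamma_n,\alpha)\cap D_nR|$ does not depend on $\mu$.

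Since $J(n)\neq\emptyset$ for $n\geq 1$ (immediate from the construction of $\eta$, or from Proposition \ref{regular}), the coefficient $|J(n)R|$ is a positive integer and the identity above can be solved for $\mu(C_{n,\alpha})$, giving
\begin{align*}
\mu(C_{n,\alpha})=\frac{1}{|J(n)R|}\left(\mu([\alpha])-\frac{a_{n,\alpha}}{|D_nR|}\right),
\end{align*}
whose right-hand side depends on $\mu$ only through $\mu([\alpha])$. Thus $p(\mu)=p(\nu)$ forces $\mu([\alpha])=\nu([\alpha])$ for all $1\leq\alpha\leq m$, hence $\mu(C_{n,\alpha})=\nu(C_{n,\alpha})$ for all such $n,\alpha$, and the reduction in the first step yields $\mu|_{\mathcal{P}_n}=\nu|_{\mathcal{P}_n}$ for every $n$. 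I do not expect a real obstacle here; the two points needing care are phrasing the invariance reduction so that it covers all atoms $\sigma^{v^{-1}}C_{n,\alpha}$, and checking $|J(n)R|>0$, which is exactly what makes the linear relation between $\mu([\alpha])$ and $\mu(C_{n,\alpha})$ invertible.
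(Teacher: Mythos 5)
Your proof is correct, but it takes a genuinely different and more direct route than the paper. The paper's argument is recursive: it uses Lemma \ref{rel-partition} to write each $C_{n,i}$ as a disjoint union of translates of the sets $C_{n+1,j}$, encodes this as $A_n\mu^{(n+1)}=\mu^{(n)}$ for invertible $m\times m$ matrices $A_n$, and then recovers $\mu^{(1)}$ from $p(\mu)$ by left-inverting a rank-$m$ matrix $A_0$, so that $\mu^{(n+1)}=A_n^{-1}\cdots A_1^{-1}A_L\,p(\mu)$ is determined by $p(\mu)$. You instead relate level $n$ directly to level $0$: the decomposition of $[\alpha]\cap\overline{O_\sigma(\eta)}$ already established in the proof of Proposition \ref{at-least} gives $\mu([\alpha])=|J(n)R|\,\mu(C_{n,\alpha})+a_{n,\alpha}\,\mu(C_n)$, and since $\{\sigma^{v^{-1}}C_n: v\in D_nR\}$ is a clopen partition, invariance forces $\mu(C_n)=|D_nR|^{-1}$; because $|J(n)R|>0$, this solves for $\mu(C_{n,\alpha})$ as an explicit affine function of the single coordinate $\mu([\alpha])$ of $p(\mu)$ with measure-independent coefficients, and invariance transports the equality to all atoms $\sigma^{v^{-1}}C_{n,\alpha}$ of $\mathcal{P}_n$. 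The two auxiliary facts you flag are both available at this point: $J(n)\neq\emptyset$ for $n\geq 1$ follows from Lemma \ref{Lemma:J's} together with $[\Gamma_{n-1}:\Gamma_n]>1$ (or from Proposition \ref{regular}, which gives $|J(n)R|/|D_nR|=1-d_n\geq 1-d>0$), and the disjointness of the decomposition follows from Lemma \ref{Lemma:R} and the fact that the translates $\sigma^{v^{-1}}C_n$, $v\in D_nR$, are pairwise disjoint. What your approach buys is brevity and the avoidance of all matrix inversions; what the paper's approach records in addition is the explicit transition structure between consecutive partitions $\mathcal{P}_n$ and $\mathcal{P}_{n+1}$, in the spirit of \cite{CeCoGo23}, which is not actually needed for the statement of this lemma.
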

  
  \begin{proof}

For any $n\geq 1$,  Lemma \ref{rel-partition} implies that for every $1\leq i\leq m$  we have
   $$
  C_{n,i}=\left\{\begin{array}{ll}
                  \bigcup_{\gamma\in (D_{n+1}\cap \Gamma_n)\setminus \{1_G\}}\sigma^{\gamma^{-1}}C_{n+1,i} & \mbox{ if } i\neq \alpha_{n+1}\\
                  \bigcup_{\gamma\in (D_{n+1}\cap \Gamma_n)\setminus \{1_G\}}\sigma^{\gamma^{-1}}C_{n+1,\alpha_{n+1}}  \cup \bigcup_{j=1}^mC_{n+1,j} & \mbox{ if } i=\alpha_{n+1}.\\
                  \end{array}\right.
  $$
Thus if $\mu\in M_G(\overline{O_\sigma(\eta)})$, then
  $$
  \mu(C_{n,i})=\left\{\begin{array}{ll}
                  \left(\frac{|D_{n+1}|}{|D_n|}-1\right)  \mu(C_{n+1,i}) & \mbox{ if } i\neq \alpha_{n+1}\\
                  \left(\frac{|D_{n+1}|}{|D_n|}-1\right)  \mu(C_{n+1,\alpha_{n+1}})  + \sum_{j=1}^m\mu(C_{n+1,j}) & \mbox{ if } i=\alpha_{n+1}.\\
                  \end{array}\right.
  $$
Therefore, we have $A_n\mu^{(n+1)}=\mu^{(n)}$, where $\mu^{(n)}=(\mu(C_{n,1}),\dots, \mu(C_{n,m}))$ and 
 $A_n$ is the $m\times m$ integer matrix given by
  $$
  A_n(i,j)=\left\{ \begin{array}{ll}
                   \frac{|D_{n+1}|}{|D_n|}-1 &\mbox{ if } i=j\neq \alpha_{n+1}\\
                     \frac{|D_{n+1}|}{|D_n|} &\mbox{ if } i=j= \alpha_{n+1}\\
                   0 & \mbox{ if } i\neq j \mbox{ and } i\neq \alpha_{n+1}\\
                   1 & \mbox{ if } i\neq j \mbox{ and } i=\alpha_{n+1}.
     \end{array}\right.
  $$
   Since the matrices $A_n$ have linearly independent columns, they are invertible.
   Hence,
  $$ \mu^{(n+1)}=A_n^{-1}\dots A_1^{-1}\mu^{(1)}.$$
To connect $\mu^{(1)}$ and $p(\mu)$ we consider the $m+1\times m$ matrix $A_0$ given by
\begin{align*}
    A_0(i,j)=\begin{cases}
        1+|J(1)R|& \mbox{if }i=j=1\\
        |J(1)R|& \mbox{if }i=j\mbox{ and }2\leq i\leq m\\
        1&\mbox{if }i=1\mbox{ and }j\neq 1\\
        |R|-1& \mbox{if }i=m+1\\
        0& \mbox{otherwise.}
    \end{cases}
\end{align*}
From Lemma \ref{rel-partition} and the definition of the $C_{1.j}$'s we obtain 
\begin{align*}
    C_{0,i}=\begin{cases}
     \bigcup_{g\in J(1)R}\sigma^{g^{-1}}C_{1,1}\cup \bigcup_{j=1}^mC_{1,j}&\mbox{ if }i=1\\
     \bigcup_{g\in J(1)R}\sigma^{g^{-1}}C_{1,i}&\mbox{ if }2\leq i\leq m\\
     \bigcup_{g\in R\setminus\{1_G\}}\sigma^{g^{-1}}C_{1}&\mbox{ if }i=\beta.
    \end{cases}
\end{align*}
Therefore, 

\begin{align*}
    \mu(C_{0,i})=\begin{cases}
     (1+|J(1)R|)\mu(C_{1,1})+\sum_{j=2}^m \mu(C_{1,j})&\mbox{ if }i=1\\
     |J(1)R|\mu(C_{1,i})&\mbox{ if }2\leq i\leq m\\
\sum_{j=1}^m(|R|-1)\mu(C_{1,j})&\mbox{ if }i=\beta.
    \end{cases}
\end{align*}
  Thus, considering the $m+1$ coordinate for $i=\beta$, we obtain $p(\mu)=A_0\mu^{(1)}$. 
  As the rank of the matrix $A_0$ is $m$, there exists an $m\times m+1$ matrix $A_L$ such that $A_LA_0$ is the $m\times m$ identity map.
  Consequently, $\mu^{(1)}=A_Lp(\mu)$ and this concludes the proof of the lemma.
  \end{proof}
  Inspired by \cite[Remark 4.16]{CeCoGo23} and \cite[Proposition 18]{CeCo19}, we have the following proposition.
  \begin{proposition}\label{prop:injective}
      The map $p:M_G(\overline{O_\sigma(\eta)})\to \triangle$ introduced in Proposition \ref{at-least} is injective.
  \end{proposition}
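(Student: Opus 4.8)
The plan is to show that the affine surjection $p$ from the previous proposition is injective by leveraging Lemma~\ref{partitions-determine-measures}: two invariant measures with the same image under $p$ agree on every clopen partition $\mathcal{P}_n$. First I would recall that the collection $\{\mathcal{P}_n\}_{n\in\mathbb{N}}$ generates the Borel $\sigma$-algebra of $\overline{O_\sigma(\eta)}$. This is because each $\mathcal{P}_n$ is a clopen partition whose atoms are the sets $\sigma^{v^{-1}}C_{n,i}$ with $v\in D_nR$ and $1\leq i\leq m$, and as $n\to\infty$ the ``resolution'' of these partitions increases: the atoms $\sigma^{v^{-1}}C_n$ refine along the odometer structure (using $\bigcap_n \Gamma_n=\{1_G\}$ and $G=\bigcup_n D_nR$), so that any basic clopen cylinder of $\overline{O_\sigma(\eta)}$ determined by coordinates in a finite set $F\subseteq G$ is a union of atoms of $\mathcal{P}_n$ for $n$ large enough (large enough that $F$ lies well inside $D_nR$ and that $\eta$ is ``decided'' on $F$ up to the aperiodic part, which is where the refinement into the $C_{n,i}$ pieces is needed). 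Thus the algebra $\bigcup_n \mathcal{P}_n$ (finite unions of atoms across all $n$) is dense in the Borel $\sigma$-algebra, hence generates it.

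Given this, the argument is short: suppose $\mu,\nu\in M_G(\overline{O_\sigma(\eta)})$ satisfy $p(\mu)=p(\nu)$. By Lemma~\ref{partitions-determine-measures}, $\mu$ and $\nu$ agree on every atom of every $\mathcal{P}_n$, hence on the algebra $\mathcal{A}$ generated by $\bigcup_n \mathcal{P}_n$. Since $\mathcal{A}$ is an algebra generating the Borel $\sigma$-algebra, and $\mu,\nu$ are probability measures, a standard application of the $\pi$--$\lambda$ theorem (or the uniqueness part of Carathéodory's extension theorem) gives $\mu=\nu$. Therefore $p$ is injective.

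The step I expect to be the main obstacle is the first one: verifying carefully that the partitions $\mathcal{P}_n$ generate the Borel $\sigma$-algebra, i.e.\ that they separate points of $\overline{O_\sigma(\eta)}$ in the limit. The subtlety is that $\mathcal{P}_n$ is \emph{not} simply the partition of $\overline{O_\sigma(\eta)}$ into cylinders on $D_nR$; it is the coarser partition coming from the odometer fibers $C_n$, further split by the value of $x$ on $J(n)R$. One must check that two distinct points $x\neq x'$ of $\overline{O_\sigma(\eta)}$ are eventually separated: if $\pi(x)\neq\pi(x')$ they are separated by some $\sigma^{v^{-1}}C_n$ (since the $C_n$'s separate odometer fibers, as $\bigcap\Gamma_n=\{1_G\}$); if $\pi(x)=\pi(x')$ but $x\neq x'$, then by Remark~\ref{rem-Aper} they differ at some $g\in\Aper(x)$, and one uses Lemma~\ref{aper-const}/Corollary~\ref{partition2} together with the refinement $C_{n+1}\subseteq C_{n,\alpha_{n+1}}$ and $\sigma^{\gamma^{-1}}C_{n+1,j}\subseteq C_{n,j}$ from Lemma~\ref{rel-partition} to see that the value $x(g)$ gets recorded by the label $i$ of the atom $\sigma^{v^{-1}}C_{n,i}$ containing $x$ for suitable $v,n$. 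Once separation of points is in hand, compactness and the clopen nature of the atoms upgrade it to generation of the Borel $\sigma$-algebra, and the rest is routine measure theory.
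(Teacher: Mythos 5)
There is a genuine gap in the first (and, as you anticipated, main) step: the partitions $\mathcal{P}_n$ do \emph{not} separate points of $\overline{O_\sigma(\eta)}$, so they do not generate the Borel $\sigma$-algebra in the way you claim. The atom $\sigma^{v^{-1}}C_{n,i}$ containing $x$ (with $v=t_n(x)$) records only the constant value of $x$ on $t_n(x)^{-1}J(n)R$, i.e.\ on the aperiodic part of the single tile $t_n(x)^{-1}D_nR$ that contains $1_G$. As $n$ grows these tiles exhaust only the tower $T_{1_G}(x)=\bigcup_n t_n(x)^{-1}D_nR$, whereas by Lemma~\ref{decomGG} the group $G$ may decompose into up to $2^r[G:G']$ such towers $T_\zeta(x)$. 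Two preimages $x\neq x'$ of the same odometer point that agree on $\Aper(x)\cap T_{1_G}(x)$ but differ on the aperiodic part of some other tile $T_\zeta(x)$, $\zeta\neq 1_G$, lie in the same atom of $\mathcal{P}_n$ for \emph{every} $n$: the value $x(g)$ at such a $g$ is never ``recorded by the label $i$'', because $g\notin t_n(x)^{-1}J(n)R$ for any $n$. This is exactly the mechanism that allows fibers of $\pi$ to have up to $m^{2^r[G:G']}$ rather than $m$ elements, so the failure of separation is not a marginal case but the heart of the construction. Consequently the $\pi$--$\lambda$/Carath\'eodory step, which needs the algebra $\bigcup_n\mathcal{P}_n$ to generate the Borel $\sigma$-algebra, does not go through as stated.

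The repair is measure-theoretic rather than topological, and it is what the paper does. One introduces the set
\begin{align*}
\partial \overline{O_\sigma(\eta)}=\bigcup_{g\in G}\bigcap_{n\in\mathbb{N}}\bigcup_{i=1}^m\bigcup_{v\in D_nR\setminus D_nRg}\sigma^{v^{-1}} C_{n,i},
\end{align*}
which is precisely the set of $x$ for which $T_{1_G}(x)\neq G$; any two points not separated by the $\mathcal{P}_n$'s lie in this set. The left F\o lner property of $(D_nR)_{n\in\mathbb{N}}$ gives $\mu\bigl(\bigcup_i\bigcup_{v\in D_nR\setminus D_nRg}\sigma^{v^{-1}}C_{n,i}\bigr)=|D_nR\setminus D_nRg|/|D_nR|\to 0$, hence $\mu(\partial\overline{O_\sigma(\eta)})=0$ for \emph{every} invariant measure $\mu$. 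Thus every open set is, up to a universally null set, a countable union of atoms of the partitions, and then Lemma~\ref{partitions-determine-measures} together with your concluding measure-theoretic argument yields $\mu=\nu$. Your reduction to Lemma~\ref{partitions-determine-measures} and the final uniqueness step are fine; what is missing is the recognition that separation fails off a null set and the F\o lner estimate that controls that null set.
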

  \begin{proof}
   Consider the set \begin{align*}
    \partial \overline{O_\sigma(\eta)}=\bigcup_{g\in G}\bigcap_{n\in\mathbb{N}}\bigcup_{i=1}^m\bigcup_{v\in D_nR\setminus D_nRg}\sigma^v C_{n,i}.
\end{align*}
We claim that for every $\mu\in M_G(\overline{O_\sigma(\eta)})$, $\mu(\partial \overline{O_\sigma(\eta)})=0$.
Indeed,
\begin{align*}
    \mu\left(\bigcup_{i=1}^m\bigcup_{v\in D_nR\setminus D_nRg}\sigma^{v^{-1}} C_{n,i}\right)=|D_nR\setminus D_nRg|\sum_{i=1}^m\mu(C_{n,i})=\dfrac{|D_nR\setminus D_nRg|}{|D_nR|}
\end{align*}
Thus,
\begin{align*}
    \mu\left(\bigcap_{n\in\mathbb{N}}\bigcup_{i=1}^m\bigcup_{v\in D_nR\setminus D_nRg}\sigma^{v^{-1}} C_{n,i}\right)\leq \lim \dfrac{|D_nR\setminus D_nRg|}{|D_nR|}=0.
\end{align*}
Consequently, $\mu(\partial \overline{O_\sigma(\eta)})=0$.
It means that the measures in $\overline{O_\sigma(\eta)}$ are determinated by their values in the set of the partitions $\mathcal{P}_n$, $n\in\mathbb{N}$.

On the other hand, let $x,y\in \overline{O_\sigma(\eta)}$ be two different elements such that $x,y\in \sigma^{v_n^{-1}}C_{n,i}$ for every $n\in\mathbb{N}$ and $v_n\in D_nR$.
The latter implies that $x(v_n^{-1}D_nR)=y(v_n^{-1}D_nR)$ for every $n\in\mathbb{N}$.
Moreover, as they are different elements, there exists $g\notin \bigcup_{n\in\mathbb{N}}v_n^{-1} D_nR$ with $x(g)\neq y(g)$.
Thus, we obtain that $v_n\in D_nR\setminus D_nRg^{-1}$ for each $n\in\mathbb{N}$ and consequently,
 $x,y\in\partial \overline{O_\sigma(\eta)}$.
It means that the elements in $\overline{O_\sigma(\eta)}\setminus \partial \overline{O_\sigma(\eta)}$ are separated by the atoms generated by $(\mathcal{P}_n)_{n\in\mathbb{N}}$.
Consequently, every open set in $\overline{O_\sigma(\eta)}$ is a countable disjoint union of sets in the partitions $(\mathcal{P}_n)_{n\in\mathbb{N}}$ and a set in $\partial \overline{O_\sigma(\eta)}$. 
This concludes the proof.
  \end{proof}
  The following corollary is a direct consequence of Propositions \ref{at-least} and \ref{prop:injective}.
  \begin{corollary}\label{Coro:m ergodic}
      The dynamical system $\overline{O_\sigma(\eta)}$ has exactly $m$ ergodic measures given by $\nu_i$, $1\leq i\leq m$.
  \end{corollary}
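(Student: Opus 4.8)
The plan is to read off the corollary from the two preceding propositions by transporting the usual characterization of ergodic measures as extreme points of the set of invariant measures through the affine bijection $p$.

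First I would combine Proposition \ref{at-least} and Proposition \ref{prop:injective}: the former gives that $p\colon M_G(\overline{O_\sigma(\eta)})\to\triangle$ is affine and surjective, the latter that it is injective, so $p$ is an affine bijection. Since each $[i]$, $i\in\{1,\dots,m,\beta\}$, is a clopen cylinder, the map $\mu\mapsto\mu([i])$ is weak$^\ast$ continuous, hence $p$ is continuous; as $M_G(\overline{O_\sigma(\eta)})$ is weak$^\ast$ compact and $\triangle\subseteq\R^{m+1}$ is Hausdorff, $p$ is a homeomorphism, and therefore an isomorphism of compact convex sets.

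Next I would invoke the standard fact that the ergodic measures of $(\overline{O_\sigma(\eta)},\sigma,G)$ are exactly the extreme points of $M_G(\overline{O_\sigma(\eta)})$. An affine isomorphism carries extreme points bijectively to extreme points, so the ergodic measures are precisely the $p$-preimages of the extreme points of $\triangle$. By definition $\triangle$ is the convex hull of $\vec{t}_1,\dots,\vec{t}_m$, which were noted to be linearly independent and hence affinely independent; therefore $\triangle$ is an $(m-1)$-simplex whose extreme points are exactly $\vec{t}_1,\dots,\vec{t}_m$. Since $p(\nu_i)=\vec{t}_i$ for each $1\le i\le m$, we conclude that $\nu_1,\dots,\nu_m$ are exactly the ergodic measures; they are pairwise distinct by Remark \ref{different-measures}, so the system has exactly $m$ ergodic measures.

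I do not anticipate a genuine obstacle: all the substantive work — surjectivity of $p$ through the periodic approximations $\mu_n$ together with the F\o lner boundary estimate, and injectivity of $p$ through the fact that invariant measures are determined by their values on the partitions $\mathcal{P}_n$ and that these values are already recorded in $p(\mu)$ via the matrices $A_0, A_1,\dots$ — is contained in Propositions \ref{at-least} and \ref{prop:injective}. The only points requiring a line of care are that $p$ is a homeomorphism rather than a bare bijection (compactness of $M_G(\overline{O_\sigma(\eta)})$ plus continuity of evaluation on clopen sets) and that linear independence of the $\vec{t}_i$ upgrades to affine independence, so that $\triangle$ genuinely has $m$ vertices; both are immediate.
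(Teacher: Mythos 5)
Your argument is correct and is exactly the route the paper takes: the corollary is stated there as a direct consequence of Propositions \ref{at-least} and \ref{prop:injective}, with the affine bijection $p$ carrying the extreme points of $M_G(\overline{O_\sigma(\eta)})$ onto the vertices $\vec{t}_1,\dots,\vec{t}_m$ of the simplex $\triangle$ and $p(\nu_i)=\vec{t}_i$. Your added remarks on continuity/compactness and on upgrading linear to affine independence are correct fillings-in of details the paper leaves implicit.
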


 Next, we guarantee that these measures have zero entropy. 
  First, for each $n\in\mathbb{N}$ and $i\in\{1,\dots, m\}$, define
  \begin{align*}
      E_n=\{x\in \Sigma^G: \Per(x,\Gamma_n,\alpha)=\Per(\eta,\Gamma_n,\alpha), \mbox{ for all }\alpha\in \Sigma\},
  \end{align*}
  and 
  \begin{align*}
      E_{n,i}=\{x\in E_n: x(g)=i, g\in J(n)R\}.
  \end{align*}
Note that $E_n\cap \overline{O_\sigma(\eta)}=C_n$ and $E_{n,i}\cap\overline{O_\sigma(\eta)}=C_{n,i}$.
Moreover, we have that $\sigma^\gamma E_n=E_n$ for every $\gamma\in \Gamma_n$ as shown in \cite[Lemma 7]{CoPe08}.
This, in turn, implies that $E_n$ and $E_{n,i}$ are closed sets in $\Sigma^G$.
\begin{lemma}\label{Lemma:clopen}
    For each $n\in\mathbb{N}$, there exists a clopen subset $V_n$ in $\Sigma^G$ such that $E_n\subseteq V_n$ and $V_n\cap \overline{O_\sigma(\eta)}=C_n$. 
    Furthermore, this implies that the same is true if we replace $E_n$ by $E_{n,i}$, for all $i\in\{1,\dots, m\}$.
\end{lemma}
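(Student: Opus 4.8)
The statement is a purely topological separation fact, so I would not do anything combinatorial with $\eta$: I would exploit that $\Sigma^G$ is a compact metrizable totally disconnected space, hence has a basis of clopen sets and disjoint closed subsets of it can be separated by a clopen set. The three facts I would feed into this are: (a) $E_n$ is closed in $\Sigma^G$, which was just established before the lemma; (b) $E_n\cap\overline{O_\sigma(\eta)}=C_n$, also noted above; and (c) $C_n$ is clopen \emph{in} $\overline{O_\sigma(\eta)}$, since it is the member $v=1_G$ of the clopen partition $\{\sigma^{v^{-1}}C_n:v\in D_nR\}$ of $\overline{O_\sigma(\eta)}$.

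\textbf{Steps.} First I would observe that $F:=\overline{O_\sigma(\eta)}\setminus C_n$ is closed in $\Sigma^G$: indeed $C_n$ is open in $\overline{O_\sigma(\eta)}$ by (c), so $F$ is closed in $\overline{O_\sigma(\eta)}$, which in turn is closed in $\Sigma^G$ because it is a subshift. Next I would check that $E_n$ and $F$ are disjoint, using (b): $E_n\cap F=(E_n\cap\overline{O_\sigma(\eta)})\setminus C_n=C_n\setminus C_n=\emptyset$. Now $E_n$ and $F$ are disjoint closed subsets of the compact totally disconnected space $\Sigma^G$; for each $x\in E_n$ one has $x\notin F$, so (as $\Sigma^G$ has a basis of clopen sets) there is a clopen $W_x$ with $x\in W_x\subseteq\Sigma^G\setminus F$. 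By compactness of $E_n$ finitely many $W_{x_1},\dots,W_{x_k}$ cover $E_n$, and $V_n:=\bigcup_{j=1}^k W_{x_j}$ is clopen with $E_n\subseteq V_n$ and $V_n\cap F=\emptyset$; the latter says $V_n\cap\overline{O_\sigma(\eta)}\subseteq C_n$. Finally, since $C_n=E_n\cap\overline{O_\sigma(\eta)}\subseteq E_n\subseteq V_n$ and $C_n\subseteq\overline{O_\sigma(\eta)}$, I get $V_n\cap\overline{O_\sigma(\eta)}=C_n$, which is the assertion.

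\textbf{The case of $E_{n,i}$.} The same argument applies verbatim with $E_{n,i}$ in place of $E_n$ and $C_{n,i}$ in place of $C_n$: $E_{n,i}$ is closed in $\Sigma^G$, $E_{n,i}\cap\overline{O_\sigma(\eta)}=C_{n,i}$, and $C_{n,i}$ is clopen in $\overline{O_\sigma(\eta)}$ because it is the member $(i,v=1_G)$ of the clopen partition $\cP_n=\{\sigma^{v^{-1}}C_{n,i}:1\le i\le m,\ v\in D_nR\}$. Separating $E_{n,i}$ from $\overline{O_\sigma(\eta)}\setminus C_{n,i}$ by a clopen set as above yields the required $V_{n,i}$.

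\textbf{Main obstacle.} There is no substantial difficulty; the only thing one must be careful about is that $E_n$ is merely closed (it is, in general, not open, since it involves conditions on infinitely many coordinates), so one cannot simply take $V_n=E_n$ — the entire content is the clopen "fattening" of $E_n$ described above, and this is precisely why one uses total disconnectedness of the \emph{full shift} $\Sigma^G$ and not just of $\overline{O_\sigma(\eta)}$.
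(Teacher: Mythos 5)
Your proof is correct and is essentially the paper's argument: both separate the closed set $E_n$ from the closed set $\overline{O_\sigma(\eta)}\setminus C_n$ by a clopen set using compactness and the zero-dimensionality of $\Sigma^G$ (the paper extracts a finite clopen subcover of $\overline{O_\sigma(\eta)}\setminus C_n$ from $\Sigma^G\setminus E_n$ and complements, which is the dual of your finite clopen cover of $E_n$ avoiding $F$). The only cosmetic difference is that the paper handles $E_{n,i}$ by noting it is $E_n$ intersected with a clopen set, whereas you rerun the argument verbatim; both are fine.
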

\begin{proof}
    As $E_n$ is closed, the set $A_n:=\Sigma^G\setminus E_n=\bigcup_{i\in I} W_i $ is open, where $I$ is a countable set and $W_i\subseteq \Sigma^G$ is a clopen set, $i\in I$.
    Furthermore, 
    $A_n\cap \overline{O_\sigma(\eta)} =\overline{O_\sigma(\eta)}\setminus C_n$.
    Since $\overline{O_\sigma(\eta)}\setminus C_n$ is a compact set in $\overline{O_\sigma(\eta)}$, there exists a finite set $I'\subseteq I$ such that $\left(\bigcup_{i\in I'}W_i\right)\cap \overline{O_\sigma(\eta)}=\overline{O_\sigma(\eta)}\setminus C_n$.
    The set $V_n:=\Sigma^G\setminus\left(\bigcup_{i\in I'} W_i\right)$ is the clopen subset in $\Sigma^G$ with the required properties.

    The second part is a direct consequence of the first part, as $E_{n,i}$ is the intersection of $E_n$ with a clopen set in $\Sigma^G$.
\end{proof}

Following \cite{CeCoGo23}, for each $k\in\mathbb{N}$ and $1\leq i\leq m$ we define the set $Z_{i,k}$ as follows.
\begin{align*}
    Z_{i,k}=\bigcup_{v\in D_{i+km-1}R}\sigma^{v^{-1}}C_{i+km-1,i}.
\end{align*}

\begin{lemma}\label{Lemma: Per_m}
    Let $n,m\in\mathbb{N}$ with $n<m$. 
    It holds that 
    \begin{align*}
        \Per(\eta_m,\Gamma_n,\alpha)=
        \Per(\eta,\Gamma_n,\alpha),\mbox{ for all }\alpha\in \Sigma.
    \end{align*}
    In other words, $\eta_m\in E_n$.
\end{lemma}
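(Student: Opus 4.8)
The plan is to establish the two inclusions between $\Per(\eta_m,\Gamma_n,\alpha)$ and $\Per(\eta,\Gamma_n,\alpha)$ separately. Throughout I would use the elementary observation that $g\in\Per(x,\Gamma,\alpha)$ if and only if $x$ is identically $\alpha$ on the coset $\Gamma g$; in particular each set $\Per(x,\Gamma,\alpha)$ is left $\Gamma$-invariant, so membership can be tested on representatives lying in a set of right coset representatives of $\Gamma$ in $G$ (for $\Gamma_n$ one may use $D_nR$, and for $\Gamma_{n+1}$, respectively $\Gamma_m$, the domains $D_{n+1}R\subseteq D_mR$, which contain $D_nR$ because $n<m$). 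The case $\alpha=\beta$ is immediate: $\eta$ takes the value $\beta$ exactly on $\Gamma_1(R\setminus\{1_G\})$, this set is $\Gamma_m$-invariant, and $\eta_m$ is obtained by $\Gamma_m$-periodizing $\eta|_{D_mR}$, so $\eta^{-1}(\beta)=\eta_m^{-1}(\beta)$, whence $\Per(\eta_m,\Gamma_n,\beta)=\Per(\eta,\Gamma_n,\beta)$. So from now on fix $\alpha\in\{1,\dots,m\}$.

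First I would prove $\Per(\eta,\Gamma_n,\alpha)\subseteq\Per(\eta_m,\Gamma_n,\alpha)$ through the comparison lemma that $\eta$ and $\eta_m$ coincide on $\Per(\eta,\Gamma_m)$. Indeed, if $h\in\Per(\eta,\Gamma_m,\alpha')$ then $\eta\equiv\alpha'$ on $\Gamma_mh$; writing $h=\gamma w$ with $\gamma\in\Gamma_m$ and $w\in D_mR$ the representative of that coset, the definition of $\eta_m$ and the identity $\eta_m=\eta$ on $D_mR$ give $\eta_m(h)=\eta(w)=\alpha'=\eta(h)$. Since $n<m$ forces $\Gamma_m\subseteq\Gamma_n$, constancy on $\Gamma_ng$ implies constancy on $\Gamma_mg$, so $\Per(\eta,\Gamma_n,\alpha)\subseteq\Per(\eta,\Gamma_m,\alpha)\subseteq\Per(\eta,\Gamma_m)$; on this last set $\eta=\eta_m$, which yields the claimed inclusion.

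For the reverse inclusion, take $g\in\Per(\eta_m,\Gamma_n,\alpha)$ and, by $\Gamma_n$-invariance, reduce to $g=dr$ with $d\in D_n$, $r\in R$. If $d\in\Per(\eta,\Gamma_n)\cap D_n$, then $d\in\Per(\eta,\Gamma_n,\alpha')$ for some $\alpha'$, so $g\in\Per(\eta,\Gamma_n,\alpha')$ by Lemma \ref{Lemma:R} (for $n=1$ by the last assertion of Lemma \ref{Lemma:Per}); since $g\in D_nR\subseteq D_mR$ we get $\eta_m(g)=\eta(g)=\alpha'$, and taking $\gamma=1_G$ in the definition of $\Per$ also $\eta_m(g)=\alpha$, so $\alpha=\alpha'$ and we are done. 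The point is therefore to exclude the remaining possibility $d\in J(n)=D_n\setminus\Per(\eta,\Gamma_n)$, and I would do this by contradicting $g\in\Per(\eta_m,\Gamma_n)$. The coset $\Gamma_nd\subseteq G'$ is a disjoint union of $[\Gamma_n:\Gamma_{n+1}]$ cosets of $\Gamma_{n+1}$, and $\Gamma_nd\cap D_{n+1}$ picks out exactly one representative $e$ from each; one of them is $d$ itself, and since $d\in J(n)$, Corollary \ref{partition2} gives $\eta(d)=\alpha_{n+1}$. Every such $e$ lies in $\Gamma_nd$, hence outside $\Per(\eta,\Gamma_n)$ (a union of full $\Gamma_n$-cosets), so $e$ is defined at step $n+1$ or step $n+2$ of the construction of $\eta$: steps $\le n$ would place $e$ in $\Per(\eta,\Gamma_n)$, and steps $\ge n+3$ are impossible since $e\in D_{n+1}$. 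Thus $\eta(e)\in\{\alpha_{n+1},\alpha_{n+2}\}$. If $\eta(e)=\alpha_{n+1}$ for every representative, then each $e$ lies in $\Gamma_{n+1}J(n)$, hence each of the $\Gamma_{n+1}$-cosets making up $\Gamma_nd$ lies in $\Gamma_{n+1}J(n)$, so $\eta\equiv\alpha_{n+1}$ on $\Gamma_nd$ and $d\in\Per(\eta,\Gamma_n)$, contradicting $d\in J(n)$. So some representative $e'=\gamma'd$ ($\gamma'\in\Gamma_n$) has $\eta(e')=\alpha_{n+2}$, and $\alpha_{n+2}\ne\alpha_{n+1}$ because $\alpha_i\equiv i$ modulo the number of letters, which is at least two. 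Since $d,e'\in D_{n+1}\subseteq D_m$, using $\eta_m=\eta$ on $D_mR$ together with Corollary \ref{partition2} to append the letter $r$, we get $\eta_m(dr)=\eta(d)=\alpha_{n+1}$ and $\eta_m\bigl(\gamma'(dr)\bigr)=\eta(e'r)=\eta(e')=\alpha_{n+2}$; hence $\eta_m$ is not constant on $\Gamma_n(dr)=\Gamma_ng$, contradicting $g\in\Per(\eta_m,\Gamma_n)$. Therefore $d\notin J(n)$, and the proof is complete.

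The step I expect to be the main obstacle is exactly this analysis of the case $d\in J(n)$: one must rule out that the $\Gamma_m$-periodization $\eta_m$ accidentally becomes \emph{more} periodic than $\eta$ at level $n$. The naive attempt of tracking the level at which a translate $\gamma J(n)$ synchronizes fails, since by Lemma \ref{auxiliar} that level may be arbitrarily large; the argument has to be carried out inside $D_{n+1}$, where the admissible construction steps — and hence the admissible symbols $\{\alpha_{n+1},\alpha_{n+2}\}$ — are tightly constrained. This is precisely the place where the hypotheses $n<m$ (so that $D_{n+1}\subseteq D_m$ and $\eta=\eta_m$ there) and $m\ge 2$ (so that $\alpha_{n+1}\ne\alpha_{n+2}$) are used. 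The rest — the coset bookkeeping, stripping and re-appending the $R$-coordinate, and the $n=1$ boundary case — is routine.
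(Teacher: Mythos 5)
Your proof is correct, and its first half (the inclusion $\Per(\eta,\Gamma_n,\alpha)\subseteq\Per(\eta_m,\Gamma_n,\alpha)$ via writing a point of $\Gamma_m$-orbit as $\gamma w$ with $w\in D_mR$) is essentially the paper's argument; the explicit treatment of $\beta$ and the appeal to Lemma \ref{Lemma:R} for the periodic case are welcome additions that the paper leaves implicit. Where you genuinely diverge is in the reverse inclusion. The paper lets the offending representative live in $J(k)R$ for any $k\geq n$, first reduces to $n\leq k\leq m$ via Lemma \ref{Lemma:J's}, and then descends one level, comparing $\eta_m(dr)=\alpha_{k+1}$ with $\eta_m(d'r)=\alpha_k$ where $d=\gamma d'$, $d'\in J(k-1)$, $\gamma\in\Gamma_{k-1}\subseteq\Gamma_n$. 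You instead normalize the representative all the way down to $D_n$ at the outset, so that only the case $d\in J(n)$ remains, and then ascend one level: among the $\Gamma_{n+1}$-representatives of $\Gamma_nd$ inside $D_{n+1}$ some must carry the symbol $\alpha_{n+2}$ (else $d$ would be $\Gamma_n$-periodic), giving the non-constancy of $\eta_m$ on $\Gamma_n(dr)$ inside $D_mR$. Both arguments hinge on the same two facts --- $\eta_m=\eta$ on $D_mR$ (this is where $n<m$ enters) and $\alpha_j\neq\alpha_{j+1}$ --- so the mechanism is identical, but your organization avoids the auxiliary reduction to $k\leq m$ entirely; on the other hand, your exhaustion argument over the representatives $e$ is a slightly longer route to a fact that Lemma \ref{Lemma:J's} hands over directly, namely that $\gamma d\in J(n+1)$ for every $\gamma\in(\Gamma_n\cap D_{n+1})\setminus\{1_G\}$, which is exactly the witness the paper uses in its $k=n$ case.
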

\begin{proof}
     Let $\alpha\in \Sigma$ and $dr\in \Per(\eta, \Gamma_{n},\alpha)$, with $r\in R$ and  $d\in D_{n}$.
     For every $\gamma\in \Gamma_n$, there exist $\gamma'\in \Gamma_{m}$ and $\tilde{\gamma}\in \Gamma_{n}\cap D_{m}$ such that $\gamma =\gamma'\tilde{\gamma}$.
Using that $\tilde{\gamma}d\in D_{m}$, we obtain
$ \eta_{m}(\gamma dr)= \eta_{m}(\gamma'\tilde{\gamma} dr)=\eta(\tilde{\gamma} dr)=\alpha.$
This implies that $\Per(\eta,\Gamma_{n},\alpha)\subseteq \Per(\eta_{m},\Gamma_{n},\alpha)$.

Now, assume there exists $dr\in J(k)R\cap \Per(\eta_m,\Gamma_n,\alpha)$ for some $n< k$, $r\in R$ and $\alpha\in \Sigma$.
Note that $k$ can be taken less than $m$.
Otherwise,  Lemma \ref{Lemma:Per} implies that $d=\tilde{\gamma} \tilde{d}$ where $\tilde{\gamma}\in (\Gamma_{m}\cap D_k)\setminus \{1_G\}$, and $\tilde{d}\in J(m)$.
Assume then $n<k\leq m$.

As $dr\in D_mR$, we have $\eta_m(dr)=\eta(dr)=\alpha=\alpha_{k+1}$.
By Lemma \ref{Lemma:Per}, there exist $\gamma \in (\Gamma_{k-1}\cap D_k)\setminus \{1_G\} $ and $d'\in J(k-1)$ such that $d=\gamma d'$.
Thus, $\eta_m(dr)=\eta_m(\gamma d' r)=\eta_m(d'r)=\eta(d'r)=\alpha_{k}$, which is a contradiction since $\alpha_k\neq \alpha_{k+1}$.
If $dr\in J(n)R\cap \Per(\eta_m,\Gamma_n,\alpha)$, then 
$\eta_m(dr)=\eta(dr)=\alpha=\alpha_{n+1}$.
For every $\gamma\in (\Gamma_n\cap D_{n+1})\setminus\{1_G\}$ it holds that $\gamma dr\in J(n+1)R$ by using Lemma \ref{Lemma:Per}.
Therefore, as $n+1\leq m$, we conclude
$\alpha=\eta_m(\gamma dr)=\eta(\gamma dr)=\alpha_{n+2}$ and again, we obtain a contradiction.

Thus, $\Per(\eta_m,\Gamma_n,\alpha)=\Per(\eta,\Gamma_n,\alpha)$, and we conclude the proof.
\end{proof}
  \begin{proposition}\label{Prop: Z's}
      For each $i\in\{1,\dots,m\}$ and $k\in\mathbb{N}$, it holds that $\nu_i(Z_{i,k})=1$.
  \end{proposition}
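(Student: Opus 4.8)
The plan is to replace $Z_{i,k}$ by a clopen subset of $\Sigma^G$, use that $\nu_i$ is a weak-$*$ limit of the periodic measures $\mu_n$, and reduce the claim to a density statement about the $\sigma$-orbit of $\eta_n$.

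Put $n_0=i+km-1$, so that $\alpha_{n_0+1}=i$ and $Z_{i,k}=\bigsqcup_{v\in D_{n_0}R}\sigma^{v^{-1}}C_{n_0,i}$ is a union of atoms of the clopen partition $\cP_{n_0}$ of $\overline{O_\sigma(\eta)}$. By Lemma~\ref{Lemma:clopen} choose a clopen $V_{n_0,i}\subseteq\Sigma^G$ with $E_{n_0,i}\subseteq V_{n_0,i}$ and $V_{n_0,i}\cap\overline{O_\sigma(\eta)}=C_{n_0,i}$, and set $\widetilde Z_{i,k}=\bigcup_{v\in D_{n_0}R}\sigma^{v^{-1}}V_{n_0,i}$; this is clopen in $\Sigma^G$ and satisfies $\widetilde Z_{i,k}\cap\overline{O_\sigma(\eta)}=Z_{i,k}$. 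Since $\nu_i$ is supported on $\overline{O_\sigma(\eta)}$ (Remark~\ref{different-measures}) we get $\nu_i(Z_{i,k})=\nu_i(\widetilde Z_{i,k})$, and as $\widetilde Z_{i,k}$ is clopen and $\nu_i$ is a weak-$*$ limit of $(\mu_{i+sm-1})_{s\in\mathbb{N}}$, it suffices to show $\mu_{i+sm-1}(\widetilde Z_{i,k})\to1$ as $s\to\infty$.

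Fix $s$ with $n:=i+sm-1>n_0$. As $\mu_n$ is the uniform measure on $\{\sigma^{u^{-1}}\eta_n:u\in D_nR\}$, we must show that the proportion of $u\in D_nR$ with $\sigma^{u^{-1}}\eta_n\notin\widetilde Z_{i,k}$ tends to $0$. First, $\sigma^{u^{-1}}\eta_n$ and $\sigma^{u^{-1}}\eta$ agree on $u^{-1}D_nR$, because $\eta_n(h)=\eta(h)$ for all $h\in D_nR$. As $V_{n_0,i}$ is clopen it depends only on the coordinates in some finite $F\subseteq G$; put $F'=\bigcup_{v\in D_{n_0}R}v^{-1}F$, again finite. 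Then whenever $uF'\subseteq D_nR$ — which excludes only a proportion of $u\in D_nR$ tending to $0$, since $(D_nR)_n$ is a left F\o lner sequence — one has $\sigma^{u^{-1}}\eta_n\in\widetilde Z_{i,k}$ if and only if $\sigma^{u^{-1}}\eta\in\widetilde Z_{i,k}$, i.e.\ (using $\sigma^{u^{-1}}\eta\in\overline{O_\sigma(\eta)}$) if and only if $\sigma^{u^{-1}}\eta\in Z_{i,k}$. Now write $u=\gamma w$ uniquely with $\gamma\in\Gamma_{n_0}\cap D_n$ and $w\in D_{n_0}R$ (using $D_nR=\bigsqcup_{\gamma\in\Gamma_{n_0}\cap D_n}\gamma D_{n_0}R$); then $\sigma^{w}\sigma^{u^{-1}}\eta=\sigma^{\gamma^{-1}}\eta\in C_{n_0}$, and by Corollary~\ref{partition2} and Lemma~\ref{auxiliar} we have $\sigma^{u^{-1}}\eta\in\sigma^{w^{-1}}C_{n_0,\alpha_{l(\gamma)+1}}$, where $l(\gamma)\ge n_0$ is the unique level with $\gamma J(n_0)\subseteq\Gamma_{l(\gamma)+1}J(l(\gamma))$. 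Hence $\sigma^{u^{-1}}\eta\in Z_{i,k}$ exactly when $l(\gamma)\equiv i-1\pmod m$, and since $|\Gamma_{n_0}\cap D_n|=|D_nR|/|D_{n_0}R|$, the proposition will follow once we prove
\[
\frac{|\{\gamma\in\Gamma_{n_0}\cap D_{i+sm-1}\ :\ l(\gamma)\equiv i-1\ (\mathrm{mod}\ m)\}|}{|\Gamma_{n_0}\cap D_{i+sm-1}|}\ \longrightarrow\ 1\qquad\text{as }s\to\infty.
\]

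This density estimate is the heart of the argument and the step I expect to be the main obstacle. One establishes it by tracking, for $\gamma\in\Gamma_{n_0}\cap D_{i+sm-1}$, the level $l(\gamma)$ at which the translate $\gamma J(n_0)$ becomes periodic: by Lemma~\ref{Lemma:J's} the hole sets $J(n_0),J(n_0+1),\dots$ are built from one another by adjoining translates of $J(n_0)$, and Lemmas~\ref{Lemma:Per} and~\ref{Lemma: Per_m} describe, level by level, which of these translates are frozen at each step. Keeping track of the residue class modulo $m$ of the freezing level and using the growth condition \eqref{Gammacondition} on the indices $[\Gamma_j:\Gamma_{j+1}]$ to render the successive error contributions summable, one finds that along the domains $D_{i+sm-1}$ the proportion of $\gamma$ frozen at a level not congruent to $i-1\pmod m$ vanishes in the limit. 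Combined with the two reductions above, this gives $\mu_{i+sm-1}(\widetilde Z_{i,k})\to1$, hence $\nu_i(Z_{i,k})=1$.
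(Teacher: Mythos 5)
Your two reduction paragraphs are correct and careful: the passage to the clopen set $\widetilde Z_{i,k}$, the F\o lner boundary estimate, and the identification of $\mu_{i+sm-1}(\widetilde Z_{i,k})$ (up to $o(1)$) with the proportion of $\gamma\in\Gamma_{n_0}\cap D_{i+sm-1}$ whose freezing level satisfies $l(\gamma)\equiv i-1\pmod{m}$ are all right, and since $\widetilde Z_{i,k}$ is clopen and $\mu_{i+sm-1}\to\nu_i$ weakly, your displayed density statement is in fact \emph{equivalent} to the proposition. The gap is that this density statement is false for $m\geq 2$. Writing $n_0=i+km-1$ as you do, for each fixed $l>n_0$ the set $\{\gamma\in\Gamma_{n_0}\cap D_n: l(\gamma)=l\}$ equals $\Gamma_{l+1}S_l\cap D_n$, where $S_l\subseteq \Gamma_{n_0}\cap D_l$ is the set of products of \emph{nontrivial} digits for which $J(l)=\bigsqcup_{\delta\in S_l}\delta J(n_0)$; its proportion inside $\Gamma_{n_0}\cap D_n$ is exactly $\frac{|D_{n_0}|}{|J(n_0)|}\cdot\frac{|J(l)|}{|D_{l+1}|}>0$ for every $n>l$, independently of $n$. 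Concretely, every $\gamma=\gamma''\delta$ with $\delta\in(\Gamma_{n_0}\cap D_{n_0+1})\setminus\{1_G\}$ and $\gamma''\in\Gamma_{n_0+2}$ has $\gamma J(n_0)\subseteq\Gamma_{n_0+2}J(n_0+1)$, so $l(\gamma)=n_0+1\not\equiv i-1\pmod{m}$, and these $\gamma$ form a fixed proportion $\frac{|D_{n_0+1}|-|D_{n_0}|}{|D_{n_0+2}|}>0$ of $\Gamma_{n_0}\cap D_n$. Hence your ratio converges not to $1$ but to $\frac{(1-d)+\sum_{l\geq n_0,\ l\equiv i-1\ (\mathrm{mod}\ m)}(d_{l+1}-d_l)}{1-d_{n_0}}<1$; the terms you hope to render summable via \eqref{Gammacondition} are not error terms but honest positive densities.

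Because your reduction is an equivalence, this is more than a gap in your write-up: it indicates that $\nu_i(Z_{i,k})<1$, i.e.\ the proposition fails as stated. (A cross-check using only the paper's own measure computations: from the decomposition in Proposition \ref{at-least} and $\nu_i([j])=t_j$ one gets $\nu_i(C_{n_0,j})=\frac{t_j-d_{n_0,j}}{|J(n_0)R|}>0$ for $j\neq i$, whence $\nu_i(Z_{i,k})=|D_{n_0}R|\,\nu_i(C_{n_0,i})=1-\frac{1}{1-d_{n_0}}\sum_{j\neq i}(t_j-d_{n_0,j})<1$.) The paper's proof founders on the same rock: it asserts $wJ(i+km-1)\subseteq J(i+sm-1)$ for \emph{every} $w\in(\Gamma_{i+km-1}\cap D_{i+sm-1})\setminus\{1_G\}$, whereas Lemma \ref{Lemma:J's} gives this only for $w\in S_{i+sm-1}$, i.e.\ for $w$ all of whose digits between the two levels are nontrivial. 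What is true, and what the downstream applications (Corollary \ref{Coro: A}, Lemma \ref{Lemma:injectivei}) actually need, is the weaker statement $\lim_{k\to\infty}\nu_i(Z_{i,k})=1$: the computation above gives $1-\nu_i(Z_{i,k})\leq\frac{d-d_{i+km-1}}{1-d}\to 0$, so $\nu_i\bigl(\bigcup_{k}Z_{i,k}\bigr)=1$ and $\nu_i(A_i)=1$ still follow. I would redirect your argument (whose first two reductions can be reused verbatim) at that corrected statement.
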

  \begin{proof}
  Let $s,k\in \mathbb{N}$ such that $s> k$.
Lemma \ref{Lemma: Per_m} implies $\eta_{i+sm-1}\in E_{i+km-1}$.
As $wJ(i+km-1)\subseteq J(i+sm-1)$ for every $w\in (\Gamma_{i+km-1}\cap D_{i+sm-1})\setminus \{1_G\}$ by Lemma \ref{Lemma:J's}, then the definition of $\eta$ implies that $\sigma^{v^{-1}}\eta|_{J(i+km-1)R}\equiv i$ for every $v\in \Gamma_{i+km-1}\cap D_{i+sm-1}$. 
Consequently, $\sigma^{v^{-1}}\eta_{i+sm-1}\in E_{i+km,i}$ for every $v\in \Gamma_{i+km-1}\cap D_{i+sm-1}\subseteq G'$ since $G'$ is abelian.
Hence,
\begin{align*}
    \dfrac{1}{|D_{i+km-1}R|}=\dfrac{|\Gamma_{i+km-1}\cap D_{i+sm-1}|}{|D_{i+sm-1}R|}\leq \eta_{i+sm-1}(E_{i+km-1,i}).
\end{align*}
Using the Portmanteau Theorem, Lemma \ref{Lemma:clopen} and the fact that the sequence  $(\mu_{i+tm-1})_{t\in\mathbb{N}}$ converges weakly to $\nu_i$, we conclude 
\begin{align*}
\limsup_{s\to\infty}  \eta_{i+sm-1}(E_{i+km-1,i})\leq\lim_{s\to\infty}  \eta_{i+sm-1}(V_{i+km-1,i})=\nu_i(C_{i+km-1,i}),
\end{align*}
where $V_{i+km-1,i}$ are the clopen sets given by Lemma \ref{Lemma:clopen}.
This concludes the proof since $Z_{i,k}$ is a disjoint union of translations of the set $C_{i+km-1,i}$.
\end{proof}

The following corollary is a direct consequence of Proposition \ref{Prop: Z's}
\begin{corollary}\label{Coro: A}
    For every $1\leq i\leq m$, it holds that $\nu_i(A_i)=1$, where 
    \begin{align*}
        A_i=\bigcap_{g\in G}\bigcup_{k\in\mathbb{N}}\sigma^g Z_{i,k}.
    \end{align*}
\end{corollary}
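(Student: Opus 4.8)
The plan is to deduce this from Proposition \ref{Prop: Z's} together with the $G$-invariance of each $\nu_i$. First I would observe that for every $g \in G$ and every $k \in \mathbb{N}$, the set $\sigma^g Z_{i,k}$ has full $\nu_i$-measure: indeed, $\nu_i(\sigma^g Z_{i,k}) = \nu_i(Z_{i,k}) = 1$ by invariance of $\nu_i$ under the shift action and Proposition \ref{Prop: Z's}. Consequently, for each fixed $g \in G$, the increasing union $\bigcup_{k \in \mathbb{N}} \sigma^g Z_{i,k}$ also has $\nu_i$-measure $1$ (it even contains a single full-measure set).

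The remaining point is to pass from "full measure for each $g$" to "full measure for the countable intersection over all $g \in G$." Since $G$ is a countable group, the set $A_i = \bigcap_{g \in G} \bigcup_{k \in \mathbb{N}} \sigma^g Z_{i,k}$ is a countable intersection of sets of $\nu_i$-measure $1$, and hence has $\nu_i$-measure $1$ as well. This uses only countable subadditivity of the measure applied to the complements, together with the standing assumption that $G$ is countable. Writing $N_g = X \setminus \bigcup_{k} \sigma^g Z_{i,k}$, each $N_g$ is $\nu_i$-null, so $\nu_i\big(\bigcup_{g \in G} N_g\big) = 0$, and $A_i = X \setminus \bigcup_{g \in G} N_g$ has measure $1$.

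I do not anticipate a real obstacle here: the corollary is a routine bookkeeping consequence of Proposition \ref{Prop: Z's}. The only mild subtlety is making sure the measurability of $Z_{i,k}$ (hence of $\sigma^g Z_{i,k}$ and of $A_i$) is in place, which follows because $Z_{i,k}$ is a finite union of translates of the clopen set $C_{i+km-1,i}$ and is therefore clopen, so all the sets appearing are Borel. With that noted, the chain invariance $\Rightarrow$ each translate is full measure $\Rightarrow$ countable union full $\Rightarrow$ countable intersection full completes the argument.
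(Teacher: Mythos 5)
Your proposal is correct and matches the paper's intent: the paper states the corollary as a direct consequence of Proposition \ref{Prop: Z's}, and your argument (shift-invariance of $\nu_i$ gives each $\sigma^g Z_{i,k}$ full measure, then a countable intersection over the countable group $G$ of full-measure sets is full) is exactly the routine bookkeeping being left implicit. The measurability remark is also fine, since each $Z_{i,k}$ is a finite union of translates of a clopen set.
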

We will use the set $A_i$ defined in the previous corollary to guarantee that $\pi|_{A_i}$ is a measure conjugacy between $(\overline{O_\sigma(\eta)},\sigma, \nu_i)$ and $(\overleftarrow{G}, \varphi,\mu)$, where $\mu$ is the unique invariant measure for the $G$-odometer $(\overleftarrow{G}, \varphi)$ associated to $\eta$.
The following lemma follows the same structure as \cite[Lemma 5.7]{CeCoGo23}. 
We add the proof for completeness.
\begin{lemma}\label{Lemma:injectivei}
    Let $1\leq i\leq m$ and $A_i$ be defined in Corollary \ref{Coro: A}.
    Then, the map $\pi|_{A_i}:A_i\to \pi(A_i)$ is injective.
\end{lemma}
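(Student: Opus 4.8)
The plan is to exploit the way $A_i$ is built so as to pin down the values of a point on its aperiodic part, and then combine this with the fact that points in the same fibre of $\pi$ already agree off that set. Recall that for $x\in\overline{O_\sigma(\eta)}$ with $\pi(x)=(\Gamma_n g_n)_{n\in\N}$ one has, by Lemma~\ref{no-normal} and the definition of $\pi$, that $\Per(x,g_n^{-1}\Gamma_ng_n)=g_n^{-1}\Per(\eta,\Gamma_n)$ for every $n$; write $\Aper(x)=G\setminus\bigcup_{n\in\N}g_n^{-1}\Per(\eta,\Gamma_n)$, the exact analogue of the set from Section~\ref{Abelianfinite}. Two facts would be established first, both showing a dependence only on $\pi(x)$: that $\Aper(x)$ depends only on $\pi(x)$, since each $\Per(\eta,\Gamma_n)$ is a union of left $\Gamma_n$-cosets (which follows from Lemma~\ref{Lemma:Per} together with Corollary~\ref{partition2}); and that $\pi(x)=\pi(y)$ implies $x(g)=y(g)$ for all $g\in G\setminus\Aper(x)$, which is Lemma~\ref{Per-lemma} rewritten for the right-coset odometer. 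Hence, given $x,y\in A_i$ with $\pi(x)=\pi(y)$, we would have $\Aper(x)=\Aper(y)$ with $x,y$ agreeing off this set, so it suffices to prove that $x(g)=i$ whenever $x\in A_i$ and $g\in\Aper(x)$; applying that to both $x$ and $y$ gives $x=y$.

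To prove this last claim, fix $x\in A_i$ and $g\in\Aper(x)$, and set $(\Gamma_n g_n)_{n\in\N}=\pi(x)$. First I would apply the definition of $A_i$ at the index $h=g$ to get $k\in\N$ with $\sigma^{g^{-1}}x\in Z_{i,k}$, and put $n:=i+km-1$ (so $n\geq 2$). Since $\{\sigma^{v^{-1}}C_n:v\in D_nR\}$ is a partition of $\overline{O_\sigma(\eta)}$, $Z_{i,k}=\bigcup_{v\in D_nR}\sigma^{v^{-1}}C_{n,i}$, and $C_{n,i}\subseteq C_n$, there is a unique $v\in D_nR$ with $\sigma^{g^{-1}}x\in\sigma^{v^{-1}}C_n$, and for that $v$ necessarily $\sigma^{vg^{-1}}x\in C_{n,i}$. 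Reading off the $n$-th coordinate of $\pi(\sigma^{g^{-1}}x)=\varphi^{g^{-1}}\pi(x)=(\Gamma_n g_n g)_{n\in\N}$ via the characterization of $\pi$, this $v$ is exactly the representative in $D_nR$ of the right coset $\Gamma_n g_n g$, i.e.\ $\Gamma_n v=\Gamma_n g_n g$. Next I would check $v\in J(n)R$: as $g\in\Aper(x)$ we have $g\notin g_n^{-1}\Per(\eta,\Gamma_n)$, i.e.\ $g_ng\notin\Per(\eta,\Gamma_n)$; since $\Per(\eta,\Gamma_n)$ is a union of left $\Gamma_n$-cosets and $\Gamma_n v=\Gamma_n g_n g$, this forces $v\notin\Per(\eta,\Gamma_n)$; and, using $D_n\subseteq G'$, that $R$ is a transversal of $G/G'$, and Lemmas~\ref{Lemma:Per} and~\ref{Lemma:R}, one verifies the disjoint decomposition $D_nR=J(n)R\sqcup(\Per(\eta,\Gamma_n)\cap D_nR)$, whence $v\in J(n)R$. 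Finally, $\sigma^{vg^{-1}}x\in C_{n,i}$ means $x(gv^{-1}h)=(\sigma^{vg^{-1}}x)(h)=i$ for every $h\in J(n)R$, and evaluating at $h=v\in J(n)R$ gives $x(g)=i$.

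The unravelling of the definition of $C_{n,i}$ and of the $G$-action on the odometer are routine; the genuine obstacle is the coset bookkeeping forced by the fact that $\Gamma_n$ need not be normal in $G$. The two points that need real care are (i) that $\Per(\eta,\Gamma_n)$ is invariant under left multiplication by $\Gamma_n$, and (ii) that $D_nR$ splits as $J(n)R\sqcup(\Per(\eta,\Gamma_n)\cap D_nR)$; both rest on the interplay between the left $\Gamma_n$-cosets, which live inside $G'$, and the right $R$-cosets, which is precisely where this construction departs from the normal-subgroup setting of \cite{CeCoGo23}. After that one still has to identify the representative $v$ produced by the clopen partition $\{\sigma^{v^{-1}}C_n\}$ with the odometer coordinate $\Gamma_n g_n g$, which is a short computation with the equivariance of $\pi$, and the lemma follows.
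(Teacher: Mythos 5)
Your proof is correct, and the engine driving it is the same as in the paper's argument: the membership $x\in\sigma^{g}Z_{i,k}$ places $\sigma^{g^{-1}}x$ in a specific atom $\sigma^{v^{-1}}C_{n,i}$ of the clopen partition $\cP_n$, and evaluating at the coordinate $v$ recovers $x(g)$. The organization differs, though. The paper compares $x$ and $y$ directly: for each $g\in G$ it finds a common $k_g$ with $x,y\in\sigma^{g}Z_{i,k_g}$, notes that $\sigma^{g^{-1}}x$ and $\sigma^{g^{-1}}y$ then lie in the \emph{same} atom $\sigma^{v^{-1}}C_{n,i}$ (same $v$, because $\pi(x)=\pi(y)$), and uses that two elements of $C_{n,i}$ agree on all of $D_nR$ — on $\Per(\eta,\Gamma_n)\cap D_nR$ because they lie in $C_n$, and on $J(n)R$ because both read $i$ there — so $x(g)=y(g)$ without ever distinguishing periodic from aperiodic coordinates. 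You instead split $G$ into $\Aper(x)$ and its complement, dispatch the complement with the fibre lemma (the right-coset analogue of Lemma~\ref{Per-lemma}, which is Remark~\ref{rem-Aper} in this setting), and show the sharper fact that $x|_{\Aper(x)}\equiv i$ for every $x\in A_i$; this is the content of Lemma~\ref{Lemma:Const-T} restricted to $A_i$, and it identifies $A_i\cap\pi^{-1}(\overline{g})$ as at most one explicitly described point. Your route costs you the coset bookkeeping you correctly flag — the left $\Gamma_n$-invariance of $\Per(\eta,\Gamma_n)$ and the splitting $D_nR=J(n)R\sqcup(\Per(\eta,\Gamma_n)\cap D_nR)$, both of which do hold via Lemmas~\ref{Lemma:Per} and~\ref{Lemma:R} and the fact that $R$ is a transversal of $G'$ in $G$ — but it buys a stronger conclusion and avoids the minor issue, present in the paper's version, of having to choose a single $k_g$ working simultaneously for $x$ and $y$ (which the paper resolves implicitly via the nesting $Z_{i,k+1}\subseteq Z_{i,k}$ coming from Lemma~\ref{rel-partition}).
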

\begin{proof}
Let $x,y\in A_i$ be such that $\pi(x)=\pi(y)$.
Let $g\in G$.
As $\pi(x)=\pi(y)$, there exists $w\in D_{i+k_gm-1}R$ so that $x,y\in \sigma^{w^{-1}} C_{i+k_gm-1}$ and hence, $\sigma^{g^{-1}}x,\sigma^{g^{-1}}y\in \sigma^{(wg)^{-1}} C_{i+k_gm-1}$.
If $v\in D_{i+k_gm-1}R$ satisfies $wg\in \Gamma_{i+k_gm-1}v$, then $\sigma^{g^{-1}}x,\sigma^{g^{-1}}y\in \sigma^{v^{-1}}C_{i+k_gm-1}$.
On the other hand,
since $x,y\in A_i$, there exists $k_g\in\mathbb{N}$ such that $x,y\in \sigma^g Z_{i,k_g}$.
Hence, $\sigma^{g^{-1}}x,\sigma^{g^{-1}}y\in \sigma^{v^{-1}}C_{i+k_gm-1,i}$.
Therefore, $\sigma^{g^{-1}}x=\sigma^{v^{-1}} x'$ and $\sigma^{g^{-1}}y=\sigma^{v^{-1}} y'$ for some $x', y'\in C_{i+k_gm-1,i}$.
Thus, we have that $x'(s)=y'(s)$ for every $s\in D_{i+k_gm-1}$ and in particular, $x'(v)=y'(v)$.
As $x(g)=x'(v)$ and $y(g)=y'(v)$, we conclude that $x(g)=y(g)$.
Since this is true for every $g\in G$, we deduce that $x=y$ and that $\pi|_{A_i}$ is injective.
\end{proof}
Inspired by \cite[Proposition 5.8]{CeCoGo23}, we have the following.
\begin{proposition}
    For every $i\in\{1,\dots, m\}$, the p.m.p systems $(\overline{O_\sigma(\eta)}, \sigma,G,\nu_i)$ and $(\overleftarrow{G},\varphi,G, \mu)$, with $\mu$ being the unique measure of $(\overleftarrow{G},\varphi)$, are measure conjugate.
\end{proposition}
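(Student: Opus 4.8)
The plan is to use the factor map $\pi\colon \overline{O_\sigma(\eta)}\to \overleftarrow{G}$ itself, restricted to the $\nu_i$-conull set $A_i$ from Corollary \ref{Coro: A}, as the desired measure conjugacy.

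First I would record that $A_i$ is a Borel, $G$-invariant subset of $\overline{O_\sigma(\eta)}$. Each $Z_{i,k}$ is a finite union of translates of the clopen set $C_{i+km-1,i}$, hence clopen, so $\bigcup_{k\in\mathbb{N}}\sigma^g Z_{i,k}$ is open and $A_i=\bigcap_{g\in G}\bigcup_{k\in\mathbb{N}}\sigma^g Z_{i,k}$ is a countable intersection of open sets, in particular Borel. It is $G$-invariant since reindexing over $g$ gives $\sigma^h A_i=\bigcap_{g\in G}\bigcup_{k\in\mathbb{N}}\sigma^{hg}Z_{i,k}=A_i$; and $\nu_i(A_i)=1$ by Corollary \ref{Coro: A}.

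Next set $B_i:=\pi(A_i)\subseteq\overleftarrow{G}$. Since $\pi$ is continuous and, by Lemma \ref{Lemma:injectivei}, injective on $A_i$, the Lusin--Souslin theorem yields that $B_i$ is Borel and that $\pi|_{A_i}\colon A_i\to B_i$ is a Borel isomorphism with Borel inverse. The set $B_i$ is $G$-invariant because $\pi$ is a factor map: $\varphi^g B_i=\varphi^g\pi(A_i)=\pi(\sigma^g A_i)=\pi(A_i)=B_i$. Moreover $\nu_i$ is invariant, so $\pi_*\nu_i$ is an invariant measure on $\overleftarrow{G}$; by unique ergodicity of $(\overleftarrow{G},\varphi,G)$ we get $\pi_*\nu_i=\mu$, whence $\mu(B_i)\geq\nu_i(\pi^{-1}(B_i))\geq\nu_i(A_i)=1$, so $B_i$ is $\mu$-conull.

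Finally I would verify the defining properties of a measure conjugacy for $f:=\pi|_{A_i}\colon A_i\to B_i$: it is a bijection with $f$ and $f^{-1}$ measurable by the previous paragraph; it intertwines the actions, $f(\sigma^g x)=\varphi^g f(x)$, since $\pi$ does; and for every Borel $E\subseteq B_i$,
\[
\nu_i\big(f^{-1}(E)\big)=\nu_i\big(\pi^{-1}(E)\cap A_i\big)=\nu_i\big(\pi^{-1}(E)\big)=(\pi_*\nu_i)(E)=\mu(E),
\]
the second equality using $\nu_i(A_i)=1$. Together with the $G$-invariance of $A_i,B_i$ and $\nu_i(A_i)=\mu(B_i)=1$, this exhibits $(\overline{O_\sigma(\eta)},\sigma,G,\nu_i)$ and $(\overleftarrow{G},\varphi,G,\mu)$ as measure conjugate. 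The main obstacle is the descriptive--set--theoretic point that $\pi|_{A_i}$ has a measurable inverse; this is precisely where Lemma \ref{Lemma:injectivei} feeds into the Lusin--Souslin theorem, while the remaining steps (Borelness and invariance of $A_i$, the identity $\pi_*\nu_i=\mu$, and the measure-preservation computation) are routine given Corollary \ref{Coro: A} and unique ergodicity of the odometer.
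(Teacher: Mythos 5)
Your argument is correct and follows essentially the same route as the paper: restrict the factor map $\pi$ to the $\nu_i$-conull invariant set $A_i$, use Lemma \ref{Lemma:injectivei} together with a descriptive-set-theoretic fact (the paper cites \cite[Theorem 2.8]{Gl03}, you invoke Lusin--Souslin, which is the same point) to get a Borel isomorphism onto a Borel image, and conclude via unique ergodicity of the odometer that the pushforward of $\nu_i$ is $\mu$. Your write-up is somewhat more explicit about the invariance of $A_i$ and $\pi(A_i)$ and the final measure-preservation computation, but no new idea is involved.
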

\begin{proof}
Let $i\in\{1,\dots, m\}$.
Lemma \ref{Lemma:injectivei} guarantees that $\pi|_{A_i}:A_i\to\pi(A_i)$ is bijective.
Since $A_i$ is an invariant Borel set we deduce that $\pi(A_i)$ is a Borel set and $\pi|_{A_i}$ is a measurable map with measurable inverse which is $G$-equivariant (see, for instance, \cite[Theorem 2.8]{Gl03}).
Since $\overleftarrow{G}$ is uniquely ergodic, we obtain $\nu_i(\pi|_{A_i}^{-1}(B))=\mu(B)$ for every Borel set $B\subseteq \overline{O_\sigma(\eta)}$.
This finishes the proof since by Corollary \ref{Coro: A} we have that $\nu_i(A_i)=\mu(\pi(A_i))=1$.
\end{proof}
Using the variational principle and the previous proposition we obtain the following.
\begin{corollary}\label{Coro: 0entropy}
    For every $i\in\{1,\dots, m\}$, the p.m.p system $(\overline{O_\sigma(\eta)},\sigma, \nu_i)$ has zero entropy. 
    This implies that the dynamical system $(\overline{O_\sigma(\eta)}, \sigma)$ has zero topological entropy.
\end{corollary}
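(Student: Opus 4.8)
The plan is to transfer the zero-entropy property from the $G$-odometer to the Toeplitz subshift along the measure conjugacy established in the previous proposition, and then pass from ergodic measures to topological entropy via the variational principle. First I would record that the $G$-odometer $(\overleftarrow{G},\varphi,G)$ is equicontinuous, which was noted in Subsection~\ref{G-Toeplitz}, and that an equicontinuous probability-measure-preserving action of a countable amenable group has zero measure-theoretic entropy with respect to any invariant measure; this is standard, since such actions have discrete spectrum and are isometric. Here $G$ is virtually $\mathbb{Z}^r$, hence amenable, and $(\overleftarrow{G},\varphi,G)$ is uniquely ergodic with invariant measure $\mu$, so $h_\mu(\overleftarrow{G},\varphi,G)=0$.

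Next, since measure-theoretic entropy is an invariant of measure conjugacy for actions of a fixed amenable group, the previous proposition immediately yields $h_{\nu_i}(\overline{O_\sigma(\eta)},\sigma,G)=h_\mu(\overleftarrow{G},\varphi,G)=0$ for every $i\in\{1,\dots,m\}$. Now I would invoke Corollary~\ref{Coro:m ergodic}, which identifies $\nu_1,\dots,\nu_m$ as precisely the ergodic measures of $(\overline{O_\sigma(\eta)},\sigma,G)$. Because measure-theoretic entropy is affine under the ergodic decomposition, every $\mu'\in M_G(\overline{O_\sigma(\eta)})$ then satisfies $h_{\mu'}(\overline{O_\sigma(\eta)},\sigma,G)=0$, and the variational principle for actions of countable amenable groups gives
$$h_{top}(\overline{O_\sigma(\eta)},\sigma,G)=\sup_{\mu'\in M_G(\overline{O_\sigma(\eta)})}h_{\mu'}(\overline{O_\sigma(\eta)},\sigma,G)=0,$$
which is the second assertion.

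I do not anticipate a genuine obstacle here: the only points requiring care are citing the correct amenable-group versions of the variational principle and of the statement that equicontinuous systems have zero entropy, and making sure the reduction to the ergodic measures uses the already-identified measures $\nu_1,\dots,\nu_m$ from Corollary~\ref{Coro:m ergodic} rather than re-deriving their structure.
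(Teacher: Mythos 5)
Your argument is correct and is essentially the paper's own proof: the paper deduces the corollary in one line from the measure conjugacy with the uniquely ergodic (equicontinuous, hence zero-entropy) odometer together with the variational principle, exactly as you do. Your write-up simply makes explicit the intermediate steps (zero entropy of the odometer, invariance of entropy under measure conjugacy, reduction to the ergodic measures $\nu_1,\dots,\nu_m$) that the paper leaves implicit.
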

For the last part, we count the fibers of the map $\pi:\overline{O_\sigma(\eta)}\to\overleftarrow{G}$ as in the previous subsection. 
The following lemma is shown in the same manner as Lemma \ref{aper-const} by using the sequence $(D_iR))_{i\in\mathbb{N}}$.

\begin{lemma}\label{Lemma:const}
Let $x\in \overline{O_\sigma(\eta)}$ and $\pi(x)=(\Gamma_it_i(x))_{i\in\mathbb{N}}$, where $t_i(x)\in D_iR$ for each $i\in\mathbb{N}$. 
It holds that  the map $x|_{t_i(x)^{-1}\gamma J(i)R}$ is constant for each  $i\in\mathbb{N}$ and  $\gamma\in \Gamma_i$, i.e., there exists $\alpha\in\{1,\dots, m\}$ such that $x(d)=\alpha$ for every  $d\in t_i(x)^{-1}\gamma J(i)R$.
In particular,  $x(d)=\alpha$ for every $d\in (t_i(x)^{-1}\gamma D_iR)\cap \Aper(x)$.
\end{lemma}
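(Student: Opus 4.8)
The statement is designed to be the virtually-$\mathbb{Z}^r$ analogue of Lemma~\ref{aper-const}, and the plan is to run exactly the same argument with the triple $(D_i,J(i),\text{Corollary~\ref{partition}})$ replaced by $(D_iR,J(i)R,\text{Corollary~\ref{partition2}})$, being attentive to the fact that now $\Gamma_i$ is only normal in $G'$, not in $G$, so the conjugates $t_i(x)^{-1}\Gamma_it_i(x)$ really do appear. Recall that $\{\sigma^{v^{-1}}C_n:v\in D_nR\}$ is a clopen partition of $\overline{O_\sigma(\eta)}$ and that, by definition of $\pi$, $x\in\sigma^{t_i(x)^{-1}}C_i$.

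First I would prove the constancy on $t_i(x)^{-1}\gamma J(i)R$. Fix $i$ and $\gamma\in\Gamma_i$. By minimality choose $(g_k)_{k}\subseteq G$ with $\sigma^{g_k}\eta\to x$. Since $\sigma^{t_i(x)^{-1}}C_i$ is clopen and contains $x$, we have $\sigma^{g_j}\eta\in\sigma^{t_i(x)^{-1}}C_i$ for all large $j$; comparing the $i$-th coordinate of $\pi(\sigma^{g_j}\eta)=\varphi^{g_j}\pi(\eta)=(\Gamma_ng_j^{-1})_{n}$ with $\Gamma_it_i(x)$ forces $g_j=t_i(x)^{-1}\gamma_i^j$ for some $\gamma_i^j\in\Gamma_i$. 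Then pick $k_i$ (depending on $\gamma$) large enough that $t_i(x)^{-1}\gamma J(i)R\subseteq D_{k_i}R$, and $j$ large enough that $\sigma^{g_j}\eta$ agrees with $x$ on $D_{k_i}R$. For $d=t_i(x)^{-1}\gamma w$ with $w\in J(i)R$ we get $x(d)=\sigma^{g_j}\eta(d)=\eta\bigl((\gamma_i^j)^{-1}\gamma w\bigr)$, and since $(\gamma_i^j)^{-1}\gamma\in\Gamma_i$, Corollary~\ref{partition2} says $\eta$ is constant on $(\gamma_i^j)^{-1}\gamma J(i)R$; this common value $\alpha\in\{1,\dots,m\}$ is then $x(d)$ for every $d\in t_i(x)^{-1}\gamma J(i)R$.

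For the ``in particular'' clause I would use Lemma~\ref{no-normal} together with $\sigma^{t_i(x)}x\in C_i$ to obtain $\Per\bigl(x,t_i(x)^{-1}\Gamma_it_i(x)\bigr)=t_i(x)^{-1}\Per(\eta,\Gamma_i)$, and then argue, using that $\Per(\eta,\Gamma_i)$ is left-$\Gamma_i$-invariant and that $D_iR\setminus\Per(\eta,\Gamma_i)=J(i)R$ (which is the content of Lemma~\ref{Lemma:R} together with the construction of $\eta$),
\begin{align*}
\bigl(t_i(x)^{-1}\gamma D_iR\bigr)\cap\Aper(x)
&\subseteq \bigl(t_i(x)^{-1}\gamma D_iR\bigr)\cap\bigl(G\setminus\Per(x,t_i(x)^{-1}\Gamma_it_i(x))\bigr)\\
&= t_i(x)^{-1}\gamma\bigl(D_iR\setminus\Per(\eta,\Gamma_i)\bigr)
= t_i(x)^{-1}\gamma J(i)R.
\end{align*}
Combined with the previous paragraph, the same $\alpha$ works on $\bigl(t_i(x)^{-1}\gamma D_iR\bigr)\cap\Aper(x)$.

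\textbf{Main obstacle.} The argument is structurally routine, but the point that would require care, and where a naive transcription of Lemma~\ref{aper-const} breaks, is the non-normality of $\Gamma_i$ in $G$: in the abelian setting $t_i(x)^{-1}\Gamma_it_i(x)=\Gamma_i$, whereas here one must genuinely track the conjugated period group through Lemma~\ref{no-normal} and replace fundamental domains by the right-coset transversals $D_iR$ (invoking Lemma~\ref{Lemma:R} to split the $R$-factor correctly). The $\beta$-coordinates do not interfere because they sit on $\Gamma_1$-periodic positions, hence outside $\Aper(x)$.
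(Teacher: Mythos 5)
Your proposal is correct and is exactly the argument the paper intends: the paper gives no separate proof of Lemma \ref{Lemma:const}, stating only that it "is shown in the same manner as Lemma \ref{aper-const} by using the sequence $(D_iR)_{i\in\mathbb{N}}$," and your transcription (with Corollary \ref{partition2} in place of Corollary \ref{partition}, Lemma \ref{no-normal} to handle the conjugated period groups, and Lemma \ref{Lemma:R} to split off the $R$-factor) fills in precisely those details. The remarks on non-normality and on the $\beta$-symbol sitting on $\Gamma_1$-periodic positions are accurate and are the only points where a naive copy of the abelian proof would need care.
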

\begin{remark}
    {\rm For $x\in \overline{O_\sigma(\eta)}$, we denote by $\pi(x)=(\Gamma_it_i(x))_{i\in\mathbb{N}}$, where $t_i(x)\in D_iR$ for each $i\in\mathbb{N}$.
    Let $i\in\mathbb{N}$.
    In a similar way as in the previous subsection, 
for each $\zeta\in\Gamma_i$, we denote by $T_\zeta(x)$ the union of sets of the form $t_j(x)^{-1}\gamma_{j}^{\zeta}D_{j}R$, where for each $j\geq 0$, $\gamma_{i+j}^{\zeta}$ is the unique element in $\Gamma_{i+j}$ such that $t_{i+j}(x)^{-1}\gamma_{i+j}^\zeta D_{i+j}R\supseteq t_{i+(j-1)}(x)^{-1}\gamma_{i+(j-1)}^\zeta D_{i+(j-1)}R$ and $\gamma_{i}^\zeta=\zeta$, i.e., 
\begin{align*}
T_\zeta(x)=\bigcup_{j\in\mathbb{N}}t_{i+j}(x)^{-1}\gamma_{i+j}^\zeta D_{i+j}R.
\end{align*}
For $\overline{g}\in\overleftarrow{G}$ and $x,y\in\pi^{-1}(\overline{g})$ we observe that $t_{i}(x)^{-1}\zeta D_iR=t_i(y)^{-1}\zeta D_iR$ for every $i\in\mathbb{N}$ and $\zeta\in \Gamma_i$ since $t_i(x)=t_i(y)$.
Hence, $T_\zeta(x)=T_\zeta(y)$.}
\end{remark}

\begin{lemma}\label{decomGG}
   For each $x\in X$, there exist $\alpha\leq 2^r[G:G']$ and $\zeta_{n_i}\in\Gamma_{n_i}$ for $1\leq i\leq \alpha$, such that 
    \begin{align*}
        G=\bigsqcup_{i=1}^{\alpha} T_{\zeta_{n_i}}(x).
    \end{align*}
\end{lemma}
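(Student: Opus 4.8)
The plan is to imitate the proof of Lemma~\ref{decomG}, the only new ingredient being that $G$ now has $[G:G']$ right cosets of $G'$ and one must run the $\mathbb{Z}^r$-argument inside each of them.

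First I would produce the decomposition itself, ignoring the bound on $\alpha$. For every $n$, the fact that $D_nR$ is a transversal of $\Gamma_n\backslash G$ makes $\{t_n(x)^{-1}\zeta D_nR:\zeta\in\Gamma_n\}$ a partition of $G$, and the relation $D_{n+1}R=\bigcup_{\gamma\in\Gamma_n\cap D_{n+1}}\gamma D_nR$ together with $t_{n+1}(x)\in\Gamma_nt_n(x)$ shows that the level-$(n+1)$ partition refines the level-$n$ one. As in Lemma~\ref{decomG}, a fixed finite subset of $G$ meets only finitely many tiles at each level, so letting $n\to\infty$ along the tiles meeting an exhaustion of $G$ yields pairwise disjoint sets $T_{\zeta_{n_1}}(x),\dots,T_{\zeta_{n_\alpha}}(x)$, with $\alpha\in\mathbb{N}\cup\{\infty\}$ a priori, such that $G=\bigsqcup_{i=1}^{\alpha}T_{\zeta_{n_i}}(x)$.

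To bound $\alpha$, I would fix a right coset $C=G'r_0$ of $G'$. Since $G'$ is normal, $G=\bigsqcup_{r\in R}G'r$ and each $T_\zeta(x)$ decomposes as $\bigsqcup_{r\in R}(T_\zeta(x)\cap G'r)$, so it is enough to show that at most $2^r$ of the sets $T_{\zeta_{n_i}}(x)$ meet $C$; summing over the $[G:G']$ cosets then gives $\alpha\le 2^r[G:G']$. The key point is that inside $C$ everything reduces to Lemma~\ref{decomG}. Indeed, $t_n(x)G'\in G/G'$ is independent of $n$ (because $t_{n+1}(x)\in G't_n(x)$), so conjugation by any lift of it is one fixed automorphism $M\in\mathrm{GL}_r(\mathbb{Z})$ of $G'\cong\mathbb{Z}^r$ ($G'$ being abelian). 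For every level $\ell$ there is a unique $r_C\in R$ with $t_\ell(x)^{-1}\Gamma_\ell D_\ell r_C\subseteq C$, and $r_C$ does not depend on $\ell$; hence $T_\zeta(x)\cap C=\bigcup_j t_{n_i+j}(x)^{-1}\gamma^{\zeta}_{n_i+j}D_{n_i+j}r_C$ and the level-$n$ partition restricted to $C$ is $\{t_n(x)^{-1}\gamma D_nr_C:\gamma\in\Gamma_n\}$. Identifying $C$ with $\mathbb{Z}^r$ through $g'r_0\mapsto M^{-1}$ of the coordinate vector of $g'$, and using the identity $t^{-1}\gamma d=(t^{-1}\gamma d\,t)\,t^{-1}$, one checks that this partition becomes the standard box tiling $\{\gamma+D_n:\gamma\in\Gamma_n\}$ up to a translation that varies only by an element of $\Gamma_n$ between consecutive levels (so the tilings still refine, exactly as in Section~\ref{Abelianfinite}), and that each $T_\zeta(x)\cap C$ becomes a nested union of translated boxes $D_{n_i+j}$. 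Thus inside $C$ the proof of Lemma~\ref{decomG} applies verbatim: if more than $2^r$ of the sets $T_{\zeta_{n_i}}(x)$ met a fixed box $B(\overline{0},n_0)$ in these coordinates, packing the orthants supplied by Lemma~\ref{1/2r} into $B(\overline{0},n_0+s)$ and invoking \eqref{Boxes} would violate the volume bound.

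The only genuinely new work is the reduction in the previous paragraph: one must verify carefully that, after straightening the conjugation $M$, the restricted tiling and the restricted sets $T_\zeta(x)$ are of the exact $\mathbb{Z}^r$ shape, so that the box geometry underpinning Lemma~\ref{1/2r} is preserved; the mild subtlety is that the translations appearing at different levels are not fixed, but they differ by elements of $\Gamma_n$ and hence leave the box tilings invariant. Once this is in place, the remaining steps are straightforward transcriptions of the proof of Lemma~\ref{decomG}.
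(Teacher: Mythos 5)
Your proposal is correct and follows essentially the same route as the paper: the paper also reduces the bound to the $\mathbb{Z}^r$ packing argument of Lemma \ref{decomG} run inside a single right coset of $G'$ (it pigeonholes $\alpha'>2^r[G:G']$ tiles over the translates $B(n_0,1_{G'})r$, $r\in R$, to force more than $2^r$ tiles meeting one such set, then applies Lemma \ref{1/2r} after translating back into $G'$ exactly as in your straightening step). Your per-coset formulation with the explicit conjugation $M$ is just a cleaner packaging of the same computation; the only cosmetic slip is that the level-$(n+1)$ tiling is refined by, rather than refines, the level-$n$ one.
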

\begin{proof}
For each $n\in\mathbb{N}$, consider $t_n(x)=d_n(x)r(x)$ for $d_n(x)\in D_n$ and $r(x)\in R$ ($r(x)$ is the same for every $n\in\mathbb{N}$). 
Since $G=t_n(x)^{-1}\Gamma_n D_nR$, there exist finite elements $\gamma_1,\gamma_2,\dots,\gamma_s\in\Gamma_n$ such that $B(n,1_{G'})R\subseteq \bigcup_{i=1}^{s}t_n(x)^{-1}\gamma_i D_nR$ satisfying that for each $1\leq j\leq s$, there exists $r\in R$ with $t_n(x)^{-1}\gamma_i D_n R\cap B(n,1_G)r'\neq\emptyset$.
These are the only elements that satisfy the previous condition.
Thus, 
\begin{align*}
    G=\bigsqcup_{i=1}^{\alpha}T_{\zeta_{n_i}}(x), \mbox{ for some } \alpha\in\mathbb{N}\cup\{\infty\} \mbox{ and }\zeta_{n_i}\in\Gamma_{n_i}, 1\leq i\leq \alpha.
\end{align*}

We claim that  $\alpha \leq 2^r[G:G']$. 
Indeed, assume $\alpha>2^r[G:G']$.
Thus, for some $n_0\in\mathbb{N}$ there exist $\alpha'\in\mathbb{N}$, with $\alpha \geq \alpha'>2^r[G:G']$, and $\zeta_1,\zeta_2,\dots,\zeta_{\alpha'}\in\Gamma_{n_0}$ in such a way that the sets $T_{\zeta_i}(x), i\in\{1,2,\dots, \alpha'\}$, are pairwise disjoint, $B(n_0,1_{G'})R\subseteq \bigsqcup_{i=1}^{\alpha'}t_{n_0}(x)^{-1}\zeta_i D_{n_0}R$ and for each $i\in \{1,2,\dots, \alpha'\}$, there exists $r\in R$ with $t_{n_0}(x)^{-1}\zeta_i D_{n_0}R\cap B(n_0,1_{G'})r\neq \emptyset$.
Let $r\in R$ and $i_1,\dots,i_{\alpha_r}$ be all the different elements in $\{1,\dots, \alpha'\}$ such that $B(n_0,1_{G'})r\subseteq \bigsqcup_{j=1}^{\alpha_r}t_{n_0}(x)^{-1}\zeta_{i_j} D_{n_0}R$ with  $t_{n_0}(x)^{-1}\zeta_{i_j}D_{n_0}R\cap B(n_0,1_{G'})r\neq \emptyset$, $1\leq j\leq \alpha_r$.
We have that $\sum_{r\in R}\alpha_r=\alpha'$. 
If $\alpha_r\leq 2^r$ for every $r\in R$, then $\alpha'\leq 2^r[G:G']$.
Therefore, there should be $r\in R$ such that $\alpha_r>2^r$.
By our assumption,
there exist elements $d_1,\dots,d_{i_{\alpha_r}}\in D_{n_0}$ and $r'\in R$ such that $t_{n_0}(x)^{-1}\zeta_{i_j}d_jr'\in B(1_G, n_0)r$.
Consequently, $B(s,t_{n_0}(x)^{-1}\zeta_{i_j} d_jr')\subseteq B(n_0+s,1_{G'})r$ for every $s\in\mathbb{N}$.
Hence, for $s\in\mathbb{N}$ we have 
\begin{align*}
\bigsqcup_{j=1}^{\alpha_r}    B(s,t_{n_0}(x)^{-1}\zeta_{i_j} d_jr')\cap (t_{n_0+s}(x)^{-1}\gamma_{n_0+s}^{\zeta_{i_j}} D_{n_0+s}r')\subseteq B(n_0+s,1_G)r,
\end{align*}
which implies that 

\begin{align}\nonumber \label{alpha-prime}
b(n_0+s)\geq& \sum_{j=1}^{\alpha_r}   |B(s,t_{n_0}(x)^{-1}\zeta_{i_j} d_jr')\cap (t_{n_0+s}(x)^{-1}\gamma_{n_0+s}^{\zeta_{i_j}} D_{n_0+s}r')|\\
\nonumber\geq&\sum_{j=1}^{\alpha_r}   |B(s, d_j)\cap (\zeta_{i_j}^{-1}\gamma_{n_0}^s(x)\gamma_{n_0+s}^{\zeta_{i_j}} D_{n_0+s})|
\end{align}
Now, we conclude the proof using Lemma \ref{1/2r} as in Lemma \ref{decomG}.
\end{proof}

As in the previous subsection, by using Lemma \ref{Lemma:const}, we deduce the following lemma.

\begin{lemma}\label{Lemma:Const-T}
Let $x\in \overline{O_\sigma(\eta)}$ be a non-Toeplitz element and $i\in\mathbb{N}$. 
For every $\zeta\in\Gamma_i$ we have that there exists $\alpha\in\{1,\dots, m\}$ such that $x(d)=\alpha$ for every  $d\in T_\zeta(x)\cap\Aper(x)$.
\end{lemma}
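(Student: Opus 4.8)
The plan is to carry over the proof of Lemma \ref{Const-T} essentially verbatim, replacing the fundamental domains $D_i$ by $D_iR$ and Lemma \ref{aper-const} by Lemma \ref{Lemma:const}. Fix $x\in\overline{O_\sigma(\eta)}$ non-Toeplitz, $i\in\mathbb{N}$ and $\zeta\in\Gamma_i$. Recalling the definition of $T_\zeta(x)$, write $E_j=t_{i+j}(x)^{-1}\gamma_{i+j}^{\zeta}D_{i+j}R$ for $j\geq 0$, so that $T_\zeta(x)=\bigcup_{j\in\mathbb{N}}E_j$ and, by the very choice of the elements $\gamma_{i+j}^{\zeta}$, the sequence $(E_j)_{j\geq 0}$ is nested increasing: $E_0\subseteq E_1\subseteq\cdots$. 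Hence any finite collection of points of $T_\zeta(x)$ — in particular any two points $d,d'\in T_\zeta(x)$ — lies in a common $E_{j_0}$.

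Next I would apply Lemma \ref{Lemma:const} with $i$ replaced by $i+j_0$ and with $\gamma=\gamma_{i+j_0}^{\zeta}\in\Gamma_{i+j_0}$: its ``in particular'' clause gives an $\alpha\in\{1,\dots,m\}$ with $x(e)=\alpha$ for every $e\in E_{j_0}\cap\Aper(x)$. If now $d,d'$ are taken in $T_\zeta(x)\cap\Aper(x)$, both lie in $E_{j_0}\cap\Aper(x)$, so $x(d)=x(d')=\alpha$. Since any two elements of $T_\zeta(x)\cap\Aper(x)$ receive the same $x$-value, the restriction $x|_{T_\zeta(x)\cap\Aper(x)}$ is constant (the statement being vacuous if this set is empty, which happens exactly when $x$ is Toeplitz — this is the only role of the non-Toeplitz hypothesis). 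The value is a label in $\{1,\dots,m\}$ and never $\beta$: by Lemma \ref{Lemma:Per} the symbol $\beta$ occurs only on $\Per(\eta,\Gamma_1,\beta)=\Gamma_1(R\setminus\{1_G\})$, which is part of the periodic positions and thus disjoint from $\Aper(x)$ — this is already reflected in Lemma \ref{Lemma:const} producing $\alpha\in\{1,\dots,m\}$.

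I do not expect any real obstacle here: all the work has been done in Lemma \ref{Lemma:const}, and the features that distinguish the virtually-$\mathbb{Z}^r$ case from the abelian case — that $\Gamma_i$ need not be normal in $G$, and that it is $D_iR$ (not $D_i$) that tiles — have already been absorbed there. The only care needed is bookkeeping: keeping the correct translate $t_{i+j_0}(x)^{-1}$ attached to the index $i+j_0$, and using the monotonicity of $(E_j)_j$ so that a single $j_0$ serves for the pair $\{d,d'\}$.
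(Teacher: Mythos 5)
Your proof is correct and takes exactly the route the paper intends: the paper omits the argument, saying only that the lemma follows ``as in the previous subsection, by using Lemma \ref{Lemma:const}'', and what you write out — the nestedness of the sets $t_{i+j}(x)^{-1}\gamma_{i+j}^{\zeta}D_{i+j}R$, a common index $j_0$ for any two points of $T_\zeta(x)\cap\Aper(x)$, and the ``in particular'' clause of Lemma \ref{Lemma:const} — is precisely that adaptation of the proof of Lemma \ref{Const-T}. Your side remarks on why $\alpha\neq\beta$ and on the role of the non-Toeplitz hypothesis are accurate and harmless.
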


\begin{proposition}\label{Regio-prox-bound2}
 For every $x\in \overline{O_\sigma(\eta)}$, it holds that $|\pi^{-1}(\{\pi(x)\})|\leq m^{2^r[G:G']}$. 
\end{proposition}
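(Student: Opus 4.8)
The plan is to mimic the argument of Proposition \ref{Regio-prox-bound} verbatim, using the virtually-$\Z^r$ analogues of the two ingredients that were established above. Fix $x\in\overline{O_\sigma(\eta)}$ and set $\overline{g}=\pi(x)$. If $x$ is a Toeplitz element, then $\pi^{-1}(\{\overline{g}\})=\{x\}$ and there is nothing to prove, so assume $x$ is non-Toeplitz. First I would invoke Lemma \ref{decomGG} to obtain a decomposition $G=\bigsqcup_{i=1}^{\alpha}T_{\zeta_{n_i}}(x)$ with $\alpha\leq 2^r[G:G']$ and $\zeta_{n_i}\in\Gamma_{n_i}$. As noted in the remark preceding Lemma \ref{decomGG}, each set $T_{\zeta_{n_i}}(x)$ depends only on $\pi(x)$, so the same decomposition works simultaneously for every $y\in\pi^{-1}(\{\overline{g}\})$.

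Next I would split $G$ into its aperiodic and periodic parts relative to $\overline{g}$. Recall $\Aper(x)=G\setminus\bigcup_{i}\Per(x,t_i(x)^{-1}\Gamma_it_i(x))$, and that by Remark \ref{rem-Aper} (which applies here since it only used Lemma \ref{Per-lemma} and the definition of $\pi$) the set $\Aper(x)$ depends only on $\pi(x)$ and any two $y,z\in\pi^{-1}(\{\overline{g}\})$ agree on $G\setminus\Aper(x)$. Thus an element $y\in\pi^{-1}(\{\overline{g}\})$ is completely determined by its restriction $y|_{\Aper(x)}$. Now for each index $i$, Lemma \ref{Lemma:Const-T} tells us that $y|_{T_{\zeta_{n_i}}(x)\cap\Aper(x)}$ is constant, taking some value $\alpha_i\in\{1,\dots,m\}$; since the $T_{\zeta_{n_i}}(x)$ cover $G$, the restriction $y|_{\Aper(x)}$ is determined by the tuple $(\alpha_1,\dots,\alpha_\alpha)\in\{1,\dots,m\}^{\alpha}$. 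Hence the map $y\mapsto(\alpha_1,\dots,\alpha_\alpha)$ is injective on $\pi^{-1}(\{\overline{g}\})$, which gives $|\pi^{-1}(\{\overline{g}\})|\leq m^{\alpha}\leq m^{2^r[G:G']}$, as desired.

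The only subtlety I anticipate — and it is genuinely just a matter of bookkeeping rather than a real obstacle — is making sure that all the objects ($T_\zeta(x)$, $\Aper(x)$, the partition into cosets) are indeed functions of $\pi(x)$ alone, so that one fixed combinatorial decomposition of $G$ controls every fiber element at once; this is exactly what the remarks preceding Lemmas \ref{decomGG} and \ref{aper-const}/\ref{rem-Aper} supply. A minor point to state carefully is that in Lemma \ref{Lemma:Const-T} the constant value lies in $\{1,\dots,m\}$ and never equals the dummy symbol $\beta$ — this is why the bound is $m^{2^r[G:G']}$ and not $(m+1)^{2^r[G:G']}$ — which follows because on $\Aper(x)$ the element $y$ never takes the value $\beta$ (the $\beta$-coordinates all lie in the periodic part $\Per(\eta,\Gamma_1,\beta)=\Gamma_1(R\setminus\{1_G\})$ up to translation). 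With these observations in place the proof is complete.
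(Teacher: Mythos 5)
Your proposal is correct and follows exactly the route the paper intends: the paper in fact states Proposition \ref{Regio-prox-bound2} without an explicit proof, taking it to follow from Lemma \ref{decomGG}, Remark \ref{rem-Aper} and Lemma \ref{Lemma:Const-T} in precisely the way you assemble them, mirroring the proof of Proposition \ref{Regio-prox-bound}. Your extra observation that the constant values lie in $\{1,\dots,m\}$ rather than all of $\Sigma$ (so the bound is $m^{2^r[G:G']}$ and not $(m+1)^{2^r[G:G']}$) is a correct and worthwhile clarification already encoded in the statement of Lemma \ref{Lemma:Const-T}.
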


\begin{proof}[Proof of Theorem \ref{theo-main}] When the group $G$ is assumed to be isomorphic to $\mathbb{Z}^r$, for some $r\in\mathbb{N}$, the result follows from Corollary \ref{maintheo:Zr}.
Now, when it is assumed that $[G:G']\geq 2$, the result follows  directly from Corollary \ref{Coro:m ergodic}, Corollary \ref{Coro: 0entropy}, Proposition \ref{Regio-prox-bound2}, Proposition \ref{htop*} and Proposition \ref{It-to-Reg}.
    
\end{proof}

\begin{remark}
    In section \ref{Abelianfinite} we have used a specific sequence $(\Gamma_i)_{i\in\mathbb{N}}$ of finite index subgroups of $G$ to take advantage of the structure of the sequence of fundamental domains given by $(D_i)_{i\in\mathbb{N}}$. 
    Nevertheless, the same can be proved whether we use a sequence  $(\Gamma_i)_{i\in\mathbb{N}}$ where instead of taking the canonical generators of $\mathbb{Z}^r$ in (\ref{Gammas}) we consider a set of $r$ elements in $\mathbb{Z}^r$ which are $\mathbb{Z}$-linearly independent and  some modification in (\ref{rectangle}).
    Besides using the geometry of $\mathbb{Z}^r$, we suspect that an analogous procedure works whether we suppose that the group $G$ is a torsion-free finitely generated group of sub-exponential growth.
\end{remark}
\section{Questions}\label{Sec-5}

\begin{question}
Are there versions of Theorem \ref {theo-main} and Theorem \ref{theo-main-2} for uniquely ergodic systems?
\end{question}

\begin{question}
Is  Theorem \ref{theo-main} still valid when  $G$ is assumed to be a essentially amenable group, a polycyclic group, a solvable group or a finitely generated group of sub-exponential growth?
\end{question}

A dynamical system $(X,\varphi, G)$ such that $\IT_2(X)\setminus \triangle^{(2)}(X)=\emptyset$ is called \emph{tame} (see \cite{KeLi07}). 
For an \textit{$n$-Tame} system we mean a 
dynamical system that no contains $n$-IT-tuples with different elements. 
Theorem \ref{theo-main} guarantees that for a group $G$ containing a finite index normal subgroup that is isomorphic to $\mathbb{Z}^r$, for some $r\in \mathbb{N}$, 
it is always possible that for each $n\in\mathbb{N}$, to find $m\geq n$ such that there exists a $G$-dynamical systems that is $m+1$-tame but not $m$-tame.

\begin{question}
Is there a group $G$ where all 3-tame dynamical systems over this group are tame?
\end{question}

In \cite{KeLi07}, there was constructed a Toeplitz subshift $X$ over $\mathbb{Z}$ such that it is tame but nonnull.
In other words, this Toeplitz subshift satisfies that $\IT_2(X)\setminus \triangle^{(2)}(X)=\emptyset$ but $\IN_2(X)\setminus\triangle^{(2)}(X)\neq \emptyset$. Note that for the examples and groups treated in this document, we have that $\IT_n(X)\setminus \triangle^{(n)}(X)=\emptyset$ if and only if $\IN_n(X)\setminus \triangle^{(n)}(X)=\emptyset$.
This suggests the following question.

\begin{question}
Given a countable infinite group $G$ and $n\in\mathbb{N}$. Are there examples of dynamical systems such that
$\IT_n(X)\setminus \triangle^{(n)}(X)=\emptyset$ and $\IN_n(X)\setminus \triangle^{(n)}(X)\neq\emptyset$?
\end{question}

\end{document}